\newtheorem{prop}{\normalfont\bfseries Proposition}[section]
\newtheorem{teo}[prop]{\normalfont\bfseries Theorem}
\newtheorem{lem}[prop]{\normalfont\bfseries Lemma}
\newtheorem{cor}[prop]{\normalfont\bfseries Corollary}
\theoremstyle{definition}
\newtheorem{nada}[prop]{}
\newtheorem{defi}[prop]{\normalfont\bfseries Definition}
\newtheorem{example}[prop]{\normalfont\bfseries Example}
\newcommand{\eps}{\varepsilon}
\newcommand{\inte}{\text{{\scriptsize$\int_0^1$}}}
\def\Aut{\mathrm{Aut}}
\def\AUT{\mathbf{Aut}}
\def\Ho{\mathrm{Ho}}
\def\Dec{\mathrm{Dec}}
\def\Ker{\mathrm{Ker }}
\def\Img{\mathrm{Im }}
\newcommand{\invlim}{\underset{\leftarrow}{\lim}\,}
\newcommand{\pb}{\ar@{}[dr]|{\mbox{\LARGE{$\lrcorner$}}}}
\newcommand{\simr}[1]{\begin{array}{c}\vspace{-.36cm} \simeq\\ \vspace{-.4cm}\text{\tiny{#1}}\vspace{.36cm} \end{array}}
\newcommand{\hto}{\rightsquigarrow}
\newcommand{\xra}[1]{\xrightarrow{#1}}
\newcommand{\dga}[2]{\mathsf{DGA}^{#1}({#2})}
\newcommand{\Fdga}[2]{\mathbf{F}\mathsf{DGA}^{#1}({#2})}
\newcommand{\MHD}{\mathbf{MHD}}
\newcommand{\Sch}[2]{\mathbf{Sch}^{#1}(#2)}
\newcommand{\wt}{\widetilde}
\newcommand{\lra}{\longrightarrow}
\newcommand{\CC}{\mathbb{C}}
\newcommand{\HH}{\mathbb{H}}
\newcommand{\PP}{\mathbb{P}}
\newcommand{\QQ}{\mathbb{Q}}
\newcommand{\ZZ}{\mathbb{Z}}
\newcommand{\Aa}{\mathcal{A}}
\newcommand{\Ee}{\mathcal{E}}
\newcommand{\Oo}{\mathcal{O}}
\newcommand{\Pp}{\mathcal{P}}
\newcommand{\kk}{\mathbf{k}}
\title{$\text{E}_1$-formality of complex algebraic varieties}
\author[J. Cirici] {J. Cirici}
\address[J. Cirici]{
Fachbereich Mathematik und Informatik\\
Freie Universit\"{a}t Berlin\\  Arnimallee 3\\ 
14195 Berlin}
\email{jcirici@math.fu-berlin.de}
\author[F. Guill\'{e}n] {F. Guill\'{e}n}
\address[F. Guill\'{e}n]{Departament
d'\`{A}lgebra i Geometria\\  Universitat de Barcelona\\ Gran Via 585,
08007 Barcelona}
\email{fguillen@ub.edu}
\thanks{Partially supported by the Spanish Ministry of Economy and Competitiveness under project MTM 2009-09557 and by the Generalitat de
Catalunya as members of the team 2009 SGR 119. The first-named author 
wants to acknowledge financial support from the German Research Foundation through the project SFB 647.}
\subjclass[2010]{55P62, 32S35.}
\keywords{Rational Homotopy, Mixed Hodge Theory, Formality, Minimal Models, Cohomological Descent, Hopf Invariant}
\date{\today}
\begin{document}

\begin{abstract}
Let $X$ be a smooth complex algebraic variety. Morgan \cite{Mo} showed that
the rational homotopy type of $X$ is a formal consequence of 
the differential graded algebra defined by the first term $E_1(X,W)$ of its weight spectral sequence.
In the present work we generalize this result to arbitrary nilpotent complex algebraic varieties (possibly singular and/or non-compact)
and to algebraic morphisms between them. The result for algebraic morphisms generalizes the Formality Theorem of \cite{DGMS}
for compact K\"{a}hler varieties, filling a gap in Morgan's theory concerning functoriality over the rational numbers.
As an application, we study the Hopf invariant of certain algebraic morphisms using intersection theory.
\end{abstract}
\maketitle

\section{Introduction}
Morgan \cite{Mo} introduced \textit{mixed Hodge diagrams of differential graded algebras} (dga's for short) and
proved, using Sullivan's theory of minimal models, the existence of functorial mixed Hodge structures on the rational homotopy groups of
smooth complex algebraic varieties. 
His results were independently extended to the singular case by Hain \cite{Ha} and Navarro \cite{Na}.
Such a mixed Hodge diagram 
is given by a filtered dga $(A_\QQ,W)$ defined over the field $\QQ$ of rational numbers, a bifiltered
dga $(A_\CC,W,F)$ defined over the field $\CC$ of complex numbers, together with a finite string of
filtered quasi-isomorphisms
$(A_\QQ,W)\otimes\CC\longleftrightarrow (A_\CC,W)$ over $\CC$,
in such a way that the cohomology $H(A_\QQ)$ is a graded mixed Hodge structure.
We denote by $\MHD$ the
category whose objects are mixed Hodge diagrams and whose morphisms
are given by level-wise filtered morphisms
that make the corresponding diagrams commute. This differs from Morgan's original definition,
in which level-wise morphisms commute only up to a filtered homotopy.
\\

In the context of sheaf cohomology of dga's, Navarro \cite{Na} introduced the \textit{Thom-Whitney simple}
and used this construction to establish the functoriality of mixed Hodge diagrams associated with
complex algebraic varieties. He defined a functor
$\HH dg:\Sch{}{\CC}\to\Ho(\MHD)$
from the category of complex reduced schemes,
that are separated and of finite type, to the homotopy category of mixed Hodge diagrams (defined by inverting level-wise quasi-isomorphisms),
in such a way that the rational component of $\HH dg(X)$
is the Sullivan-De Rham functor of $X$.
\\

To study the homotopy category $\Ho(\MHD)$ we
 introduce a notion of \textit{minimal object} in the category of mixed Hodge diagrams
and prove the existence of enough models of such type,
adapting the classical construction of Sullivan's minimal models of dga's.
In conjunction with Navarro's functorial construction of mixed Hodge diagrams, this
provides an alternative proof of Morgan's result on the existence of 
functorial mixed Hodge structures in rational homotopy.
A main difference with respect to Morgan's approach is that
our models are objects of a well defined category.
The complex component of our minimal model 
coincides with Morgan's bigraded model (see $\S$6 of \cite{Mo}).
However, we preserve the rational information,
allowing functorial results over the rational numbers. 
In a future paper, we will provide a more detailed study of the homotopy category of mixed Hodge diagrams,
and interpret the existence of minimal models
as a multiplicative version of Beilinson's Theorem on mixed Hodge complexes (see Theorem 2.3 of \cite{Be}).
Using Deligne's splitting of mixed Hodge structures on the minimal models, we prove that
morphisms of nilpotent complex algebraic varieties are $E_1$-formal at the rational level:
the rational homotopy type
is entirely determined by
the first term of the spectral sequence associated with the multiplicative weight filtration.
This generalizes the Formality Theorem of \cite{DGMS}
for compact K\"{a}hler manifolds and
a result due to Morgan (see Theorem 10.1 of \cite{Mo}) for smooth open varieties.
The results agree with Grothendieck’s yoga of weights and can be
viewed as a materialization of his principle in rational homotopy. Indeed,
the weight filtration expresses the way in which the cohomology of the variety is
related to cohomologies of smooth projective varieties. In particular,
$E_1$-formality implies that, at the rational level, complex algebraic varieties have finite-dimensional
models determined by cohomologies of smooth projective varieties. 
\\

This paper is organized as follows.
Section 2 is devoted to the homotopy theory of filtered differential graded commutative algebras. We introduce the notion of $E_r$-formality 
and study its descent properties with respect to field extensions.
In Section 3, we study the homotopy theory of
mixed Hodge diagrams. The existence of minimal models is proven in Theorem $\ref{minim_ho}$ for objects,
and in Theorem $\ref{modelmorfisme}$ for morphisms.
In Section 4, we recall Navarro's construction of mixed Hodge diagrams associated with
complex algebraic varieties. Together with the results of the previous section this leads to the main result of this paper (Theorem $\ref{formalitat_vars}$)
on the $E_1$-formality of complex algebraic varieties.
Lastly, Section 5 is devoted to an application: we study the Hopf invariant of certain algebraic morphisms 
via the weight spectral sequence and intersection theory.

\section{Homotopy Theory of Filtered Algebras}
The category of filtered differential graded commutative algebras over a field $\kk$ of characteristic 0 does not admit a Quillen model structure.
However, the existence of filtered minimal models allows to define a homotopy theory in a non-axiomatic conceptual framework, as done by
Halperin-Tanré \cite{HT}.
In this section we introduce $E_r$-cofibrant filtered dga's and show that 
these satisfy a homotopy lifting property with respect to $E_r$-quasi-isomorphisms. 
We introduce the notions of $E_r$-formality and $r$-splitting and
study their descent properties with respect to field extensions.

\subsection*{Filtered differential graded commutative algebras}
The notion of filtered dga arises from the
compatible combination of a filtered complex with the multiplicative structure of a dga.
For the basic definitions and results on the homotopy theory of dga's we refer to \cite{BG} and \cite{FHT}.
All dga's considered will be non-negatively graded and defined over a field $\kk$ of characteristic 0.
\\

Denote by $\Fdga{}{\kk}$ the category of filtered dga's over $\kk$.
The base field $\mathbf k$ is considered as a filtered
dga with the trivial filtration and the unit map $\eta:\kk\to A$ is filtered.
We will restrict to filtered dga's $(A,W)$ whose filtration is regular and exhaustive:
for each $n\geq 0$ there exists $q\in\ZZ$ such that $W_qA^n=0$, and $A=\cup_pW_pA$.\\

The spectral sequence associated with a filtered dga $A$ is compatible with the multiplicative structure. 
Hence for all $r\geq 0$, the term $E_r^{*,*}(A)$
is a bigraded dga with differential $d_r$ of bidegree $(r,1-r)$.\\

For the rest of this section we fix an integer $r\geq 0$. We adopt the following definition of \cite{HT}.

\begin{defi}
A morphism of filtered dga's $f:A\to B$ is called \textit{$E_r$-quasi-isomorphism} if
 $E_r(f):E_r(A)\to E_r(B)$ is a quasi-isomorphism (the morphism $E_{r+1}(f)$ is an isomorphism).
\end{defi}

Since filtrations are regular and exhaustive, every $E_r$-quasi-isomorphism is a quasi-isomorphism. 
Denote by $\Ee_r$ the class of $E_r$-quasi-isomorphisms, and by
$$\Ho_r(\Fdga{}{\kk}):=\Fdga{}{\kk}[\Ee_r^{-1}]$$
the corresponding localized category. This is the main object of study in the homotopy theory of filtered dga's.
Objects in this category are called \textit{$E_r$-homotopy types}. We have functors
$$\Ho_r(\Fdga{}{\kk})\stackrel{E_r}{\lra} \Ho_0(\Fdga{}{\kk})\stackrel{H}{\lra}\dga{}{\kk}.$$
Deligne's décalage functor of filtered complexes (Definition 1.3.3 of \cite{DeHII}) is compatible with multiplicative structures. 
Therefore it defines a functor
$\Dec:\Fdga{}{\kk}\lra \Fdga{}{\kk}$
which is the identity on morphisms. It follows from Proposition 1.3.4 of loc. cit.
that  $\Ee_{r+1}=\Dec^{-1}(\Ee_r)$. Hence there is an induced functor
$$\Dec:\Ho_{r+1}(\Fdga{}{\kk})\lra \Ho_{r}(\Fdga{}{\kk}).$$
In a subsequent paper we will show that this is in fact an equivalence of categories. In particular,
the study of $E_r$-homotopy types reduces to the case $r=0$.
\begin{defi}Let $(V,W)$ non-negatively graded $\kk$-vector space with a regular and exhaustive filtration.
The \textit{free filtered graded algebra} $\Lambda(V,W)$ defined by $(V,W)$ is the free graded algebra $\Lambda (V)$ 
endowed with the multiplicative filtration induced by the filtration of
$V$. If $A$ has a differential compatible with its multiplicative filtration, then it is called a \textit{free filtered dga}.
\end{defi}

We introduce a notion of homotopy between morphisms suitable to the study of 
$E_r$-homotopy types of filtered dga's.
\begin{nada}
Let $\Lambda(t,dt)$ be the free dga with generators $t$ and $dt$ of degree $0$ and $1$ respectively.
For $r\geq 0$, define an increasing filtration $\sigma[r]$ on $\Lambda(t,dt)$ by letting
$t$ be of pure weight $0$ and $dt$ of pure weight $-r$ and 
extending multiplicatively. Note that $\sigma[0]$ is the trivial filtration, and $\sigma[1]$ is the b\^{e}te filtration.
\end{nada}
\begin{defi}\label{rpath}
The \textit{$r$-path} $P_r(A)$ of a filtered dga $A$ is the dga $A\otimes\Lambda(t,dt)$ with the filtration defined by the convolution of
$W$ and $\sigma[r]$. We have:
$$W_pP_r(A)=\sum_q W_{p-q}A\otimes \sigma[r]_q\Lambda(t,dt)=(W_{p}A\otimes\Lambda(t))\oplus (W_{p+r}A\otimes\Lambda(t)dt).$$
For each $\lambda\in\kk$ there is a map of evaluation of forms $\delta^\lambda:P_r(A)\to A$ defined by $t\mapsto \lambda$ and $dt\mapsto 0$.
\end{defi}
The following Lemma is a matter of verification.
\begin{lem}\label{rpathcommutes}
Let $A$ be a filtered dga.
There are canonical isomorphisms
$$E_r(P_r(A))\cong E_r(A)\otimes\Lambda(t,dt),\quad
\Dec (P_{r+1}(A))\cong P_r(\Dec A).$$
\end{lem}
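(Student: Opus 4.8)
The plan is to verify the two isomorphisms separately by direct computation, exploiting that the underlying dga of every object in sight is $A\otimes\Lambda(t,dt)$ and that the filtration of $P_r(A)$ is the explicit one recorded in Definition \ref{rpath}. For the first isomorphism I would start by computing the spectral sequence of $(\Lambda(t,dt),\sigma[r])$ on its own. Since $t$ has pure weight $0$ and $dt$ pure weight $-r$, the de Rham differential $t^k\mapsto kt^{k-1}dt$ strictly lowers the weight by $r$; hence the induced differentials vanish on the pages $E_0,\dots,E_{r-1}$, each of which is $\Lambda(t,dt)$ with zero differential, while at the $r$-th page $d_r$ is induced by the full de Rham differential. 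Thus $E_r(\Lambda(t,dt),\sigma[r])$ is $\Lambda(t,dt)$ equipped with $d_r=d$. Since the filtration of $P_r(A)$ is the convolution of $W$ with $\sigma[r]$, the associated graded of $P_r(A)$ is, over the field $\kk$, the tensor product of the associated gradeds of the two factors; feeding this through the Künneth isomorphism page by page gives $E_s(P_r(A))\cong E_s(A)\otimes E_s(\Lambda(t,dt),\sigma[r])$ compatibly with the Leibniz differentials. As the second factor carries the zero differential for $s<r$, setting $s=r$ produces the desired isomorphism $E_r(P_r(A))\cong E_r(A)\otimes\Lambda(t,dt)$ of bigraded dga's, the de Rham differential on $\Lambda(t,dt)$ realizing $d_r$ on the second factor and the generators $t,dt$ sitting in the bidegrees dictated by $\sigma[r]$.

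For the second isomorphism I would show that the two filtrations on $A\otimes\Lambda(t,dt)$ literally coincide, so that the claimed isomorphism is the identity of the underlying dga. Writing a degree $n$ element as $x=\sum_k a_k\otimes t^k+\sum_k b_k\otimes t^k dt$ with $a_k\in A^n$, $b_k\in A^{n-1}$, and using Deligne's décalage $(\Dec W)_pK^n=\{x\in W_{p-n}K^n:\ dx\in W_{p-n-1}K^{n+1}\}$, one finds that $x\in(\Dec W)_pP_{r+1}(A)$ if and only if $a_k\in W_{p-n}A^n$, $da_k\in W_{p-n-1}A^{n+1}$, $b_k\in W_{p-n+r+1}A^{n-1}$ and $(-1)^n(k+1)a_{k+1}+db_k\in W_{p-n+r}A^n$, the last condition arising because $d(a_{k+1}\otimes t^{k+1})$ contributes to the coefficient of $t^k dt$ in $dx$. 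Unwinding instead the definition of $P_r(\Dec A)$ from the décalage filtration of $\Dec A$ gives exactly the same membership conditions on the $a_k$ and $b_k$, but with the coupled condition replaced by the decoupled one $db_k\in W_{p-n+r}A^n$.

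The crux, and the one place I expect to have to be careful, is the comparison of these two conditions. Here the hypothesis $r\geq 0$ is decisive: the constraints on the $a_k$ force $a_{k+1}\in W_{p-n}A^n\subseteq W_{p-n+r}A^n$, so the term $(-1)^n(k+1)a_{k+1}$ automatically lies in $W_{p-n+r}A^n$ and the coupled condition is equivalent to the decoupled one. Consequently $(\Dec W)_pP_{r+1}(A)=W_pP_r(\Dec A)$ for all $p$ and $n$, and the identity is the required isomorphism of filtered dga's. Apart from this point the verification is mechanical; the only bookkeeping that demands attention is the degree shift in the décalage formula and the signs produced by the de Rham differential.
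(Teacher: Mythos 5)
Your verification is correct; the paper offers no argument for this lemma beyond declaring it ``a matter of verification'', and your computation --- the page-by-page K\"unneth identification for the first isomorphism and the explicit unwinding of the d\'ecalage membership conditions for the second, with the key observation that $a_{k+1}\in W_{p-n}A^n\subseteq W_{p-n+r}A^n$ absorbs the cross term precisely because $r\geq 0$ --- supplies exactly that verification. The one step you assert without proof, that the natural comparison map $E_s(A)\otimes E_s(\Lambda(t,dt),\sigma[r])\to E_s(P_r(A))$ is an isomorphism of complexes on every page, does hold here by the standard induction (it is an isomorphism on $E_0$ because $\sigma[r]$ is a finite filtration and $\kk$ is a field, it commutes with each $d_s$ by the Leibniz rule on approximate cocycles, and the K\"unneth theorem over a field propagates the isomorphism to the next page), so there is no gap.
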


\begin{defi}\label{rhomotopy}
Let $f,g:A\to B$ be morphisms of filtered dga's. An \textit{$r$-homotopy} from $f$ to $g$ is a
morphism of filtered dga's $h:A\to P_r(B)$ satisfying $\delta^0h=f$ and $\delta^1h=g$.
We denote $h:f\simr{r}g$.
\end{defi}

\begin{lem} If $f\simr{r}g$ then $f=g$ in $\Ho_r(\Fdga{}{\kk})$ and $E_{r+1}(f)=E_{r+1}(g)$.

\end{lem}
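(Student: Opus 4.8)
The plan is to reduce everything to the behaviour of the two evaluation maps $\delta^0,\delta^1\colon P_r(B)\to B$, since by hypothesis $f=\delta^0 h$ and $g=\delta^1 h$ for the given $r$-homotopy $h$. Thus it suffices to show that $\delta^0=\delta^1$ in $\Ho_r(\Fdga{}{\kk})$ and that $E_{r+1}(\delta^0)=E_{r+1}(\delta^1)$; composing with $h$ (respectively with $E_{r+1}(h)$) then yields both assertions at once.

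First I would treat the localized statement. Consider the unit inclusion $\iota\colon B\to P_r(B)$, $b\mapsto b\otimes 1$, which satisfies $\delta^\lambda\iota=\mathrm{id}_B$ for every $\lambda\in\kk$. By Lemma $\ref{rpathcommutes}$ the map $E_r(\iota)$ is identified with the inclusion $E_r(B)\to E_r(B)\otimes\Lambda(t,dt)$, $x\mapsto x\otimes 1$; since $\Lambda(t,dt)$ has cohomology $\kk$ concentrated in degree $0$, this is a quasi-isomorphism, so $\iota\in\Ee_r$. As $E_r$-quasi-isomorphisms are precisely the morphisms inducing an isomorphism on $E_{r+1}$, they inherit the two-out-of-three property from isomorphisms; applying it to $\delta^\lambda\iota=\mathrm{id}_B$ shows $\delta^\lambda\in\Ee_r$. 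Hence in $\Ho_r(\Fdga{}{\kk})$ the arrow $\iota$ is invertible, and the relation $\delta^\lambda\iota=\mathrm{id}$ forces $\delta^\lambda=\iota^{-1}$ independently of $\lambda$. In particular $\delta^0=\delta^1$, and therefore $f=g$ in $\Ho_r(\Fdga{}{\kk})$.

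For the statement on $E_{r+1}$ I would compute this page for the path object directly. Again by Lemma $\ref{rpathcommutes}$ we have $E_r(P_r(B))\cong E_r(B)\otimes\Lambda(t,dt)$, and the key point is to identify the induced differential $d_r$. The generators carry $\sigma[r]$-weights $0$ and $-r$ for $t$ and $dt$ respectively, so the relation $d(t)=dt$ lowers the weight by exactly $r$; it therefore contributes precisely to $d_r$ and to no earlier differential. Consequently on $E_r(P_r(B))$ the differential takes the form $d_r\otimes 1\pm 1\otimes d$, where $d$ is the full differential of $\Lambda(t,dt)$. Taking cohomology and using the Künneth formula together with $H(\Lambda(t,dt),d)=\kk$ yields a natural isomorphism $E_{r+1}(P_r(B))\cong E_{r+1}(B)$. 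Under this identification the only class of $\Lambda(t,dt)$ surviving to $E_{r+1}$ is the constant $1$, on which every $\delta^\lambda$ acts as the identity, whereas $t$ itself does not survive; hence $E_{r+1}(\delta^0)=E_{r+1}(\delta^1)$, and composing with $E_{r+1}(h)$ gives $E_{r+1}(f)=E_{r+1}(g)$.

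I expect the main obstacle to lie in the bookkeeping of the third paragraph: one must verify carefully, using the convolution of $W$ with $\sigma[r]$ from Definition $\ref{rpath}$, that under the isomorphism of Lemma $\ref{rpathcommutes}$ the map $t\mapsto dt$ lands exactly on the $r$-th page, so that a single further passage to cohomology collapses the $\Lambda(t,dt)$ factor to $\kk$, and that the resulting Künneth decomposition of the $E_{r+1}$-page is compatible with the evaluation maps. The remaining verifications are formal.
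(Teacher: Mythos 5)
Your proof is correct and follows essentially the same route as the paper's: both assertions reduce to the fact that the unit inclusion $\iota\colon B\to P_r(B)$ is an $E_r$-quasi-isomorphism, which is exactly what the paper extracts from Lemma~\ref{rpathcommutes} together with the acyclicity of $\Lambda(t,dt)$. Your third paragraph is redundant, though not wrong: once $E_{r+1}(\iota)$ is known to be an isomorphism, the identity $\delta^\lambda\iota=\mathrm{id}_B$ already forces $E_{r+1}(\delta^\lambda)=E_{r+1}(\iota)^{-1}$ for every $\lambda$, so there is no need to compute $E_{r+1}(P_r(B))$ via the K\"unneth formula.
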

\begin{proof}
Since the inclusion $\iota:A\to A\otimes\Lambda(t,dt)$ is a quasi-isomorphism
for any given dga $A$, by Lemma $\ref{rpathcommutes}$ the map $\iota:B\to P_r(B)$ is an $E_r$-quasi-isomorphism.
Hence $E_{r+1}(\delta^0)=E_{r+1}(\delta^1)$.
\end{proof}

\subsection*{Cofibrant filtered algebras}
Cofibrant objects play a key role in homotopy theory. 
We introduce $E_r$-cofibrant dga's
as an adaptation to the filtered setting of the classical notion of Sullivan dga.
The following is a simplified variant of the notion of $(R,r)$-extension introduced by \cite{HT}.
\begin{defi}
Let $A$ be a filtered dga. An \textit{$E_r$-cofibrant extension
of $A$ of degree $n\geq 0$ and weight $p\in\ZZ$} is a filtered dga $A\otimes_\xi\Lambda V$, 
where $V$ is a filtered graded module concentrated in pure degree $n$ and pure weight $p$ and
$\xi:V\to W_{p-r}A$ is a linear map of degree 1 such that $d\circ\xi=0$.
The differential and the filtration on $A\otimes_\xi\Lambda V$ are defined by multiplicative extension.
\end{defi}

\begin{defi}
An \textit{$E_r$-cofibrant dga over $\kk$} is a filtered dga defined by the
colimit of a sequence of $E_r$-cofibrant extensions starting from the base field $\kk$.
\end{defi}

\begin{lem}\label{propietats_rcofs}
Let $A$ be an $E_r$-cofibrant dga. Then:
\begin{enumerate}[(1)]
\item  $A=\Lambda(V,W)$ is a free filtered dga and $d(W_pA)\subset W_{p-r}A$ for all $p\in\ZZ$.
\item As bigraded vector spaces, $E_0(A)=\cdots=E_{r-1}(A)=E_r(A)$.
\end{enumerate}
\end{lem}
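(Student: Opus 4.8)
The plan is to read off (1) directly from the inductive construction of $A$ and then obtain (2) as a formal consequence about the first pages of the spectral sequence. By definition $A=\mathrm{colim}_i A_i$ is the colimit of a tower $\kk=A_0\to A_1\to\cdots$ in which each step $A_{i+1}=A_i\otimes_{\xi_i}\Lambda V_i$ freely adjoins a module $V_i$ of pure degree and pure weight, with $d|_{V_i}=\xi_i$. As graded algebras $A_{i+1}=A_i\otimes\Lambda V_i$, so by induction $A_i$ is free on $V_0\oplus\cdots\oplus V_{i-1}$; passing to the colimit, the underlying graded algebra of $A$ is free on $V:=\bigoplus_i V_i$. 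Since the filtration of each extension is by construction the multiplicative one induced by the pure weights of the generators, the same holds for the colimit, so $A=\Lambda(V,W)$ as a free filtered dga.

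The estimate $d(W_pA)\subset W_{p-r}A$ I would verify on monomials. A generator $v\in V_i$ of weight $q$ satisfies $dv=\xi_i(v)\in W_{q-r}A_i\subset W_{q-r}A$ by the defining property of an $E_r$-cofibrant extension. Because $W_pA$ is spanned by monomials $v_1\cdots v_k$ with $\sum_j\mathrm{wt}(v_j)\le p$ and $d$ is a derivation, the Leibniz rule gives $d(v_1\cdots v_k)=\sum_j\pm\,v_1\cdots(dv_j)\cdots v_k$; replacing the factor $v_j$ of weight $\mathrm{wt}(v_j)$ by $dv_j\in W_{\mathrm{wt}(v_j)-r}A$ drops the total weight of each summand to at most $\big(\sum_j\mathrm{wt}(v_j)\big)-r\le p-r$. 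Hence each summand lies in $W_{p-r}A$, and by linearity $d(W_pA)\subset W_{p-r}A$. As every monomial already lives in some finite stage $A_i$, this estimate is compatible with the colimit.

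Finally, (2) follows formally from (1). I would recall the construction of the spectral sequence of $(A,W)$ through the approximate cycles $Z_s^p=\{x\in W_pA:\ dx\in W_{p-s}A\}$, with $E_s^p=Z_s^p/\big(Z_{s-1}^{p-1}+d(Z_{s-1}^{p+s-1})\big)$. For each $s\le r$, part (1) makes the condition $dx\in W_{p-s}A$ automatic, since $dx\in W_{p-r}A\subset W_{p-s}A$; thus $Z_s^p=W_pA$ and $Z_{s-1}^{p-1}=W_{p-1}A$, while the boundary term satisfies $d(Z_{s-1}^{p+s-1})\subset d(W_{p+s-1}A)\subset W_{p+s-1-r}A\subset W_{p-1}A$ and is therefore absorbed. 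Consequently $E_s^p\cong W_pA/W_{p-1}A=\Gr_p A$ for all $0\le s\le r$, which is exactly the asserted identity $E_0(A)=\cdots=E_r(A)$ of bigraded vector spaces; equivalently, $d$ lowering the weight by at least $r$ forces the differentials $d_0,\dots,d_{r-1}$ to vanish. The only genuinely delicate point is the weight bookkeeping in (1) — confirming that the colimit carries the multiplicative filtration and checking the Leibniz estimate uniformly across the tower; once (1) is in hand, (2) is purely formal and presents no real obstacle.
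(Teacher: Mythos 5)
Your proposal is correct and follows the same route as the paper, which simply asserts that (1) ``follows directly from the definition'' and that (2) follows because $d_0=\cdots=d_{r-1}=0$; you have merely filled in the Leibniz-rule weight bookkeeping and the explicit computation of the approximate-cycle groups $Z_s^p$ that the paper leaves to the reader.
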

\begin{proof}
Assertion (1) follows directly from the definition. 
From (1), the induced differentials of the associated spectral sequence satisfy
$d_0=d_1=\cdots=d_{r-1}=0$. Hence (2) follows.
\end{proof}

\begin{lem}\label{dec_cof} Let $A$ be an $E_{r+1}$-cofibrant dga, with $r\geq 0$. Then:
\begin{enumerate}[(1)]
\item For all $n\geq 0$ and all $p\in \ZZ$, $\Dec W_pA^n=W_{p-n}A^n$.
\item The filtered dga $\Dec A$ is  $E_r$-cofibrant.
\end{enumerate}

\end{lem}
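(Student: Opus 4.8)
The plan is to use Deligne's décalage formula directly. Recall that for a filtered complex $(A,W)$,
$$(\Dec W)_pA^n=\{x\in W_{p-n}A^n\ :\ dx\in W_{p-n-1}A^{n+1}\}$$
(Definition 1.3.3 of \cite{DeHII}). For part (1), I would first observe that the condition on $dx$ is automatic for an $E_{r+1}$-cofibrant dga. Indeed, Lemma $\ref{propietats_rcofs}$(1) gives $d(W_qA)\subset W_{q-(r+1)}A$ for every $q$; taking $q=p-n$ yields $d(W_{p-n}A^n)\subset W_{p-n-(r+1)}A^{n+1}\subset W_{p-n-1}A^{n+1}$, the last inclusion holding because $r+1\geq 1$. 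Hence every $x\in W_{p-n}A^n$ satisfies $dx\in W_{p-n-1}A^{n+1}$ automatically, the kernel condition is vacuous, and $(\Dec W)_pA^n=W_{p-n}A^n$.

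For part (2) I would track the effect of $\Dec$ on a single cofibrant extension and then pass to the colimit. Writing $A$ as the colimit of a sequence of $E_{r+1}$-cofibrant extensions $B'=B\otimes_\xi\Lambda V$, with $V$ in pure degree $n$ and pure weight $p$ and $\xi=d|_V:V\to W_{p-(r+1)}B^{n+1}$, $d\circ\xi=0$, the first step is to read off the new weights from part (1). For a generator $v\in V\subset A^n$ of pure weight $p$, part (1) gives $v\in(\Dec W)_qA^n\iff v\in W_{q-n}A^n\iff q\geq p+n$, so $v$ acquires pure weight $p+n$ in $\Dec A$. More generally, since degree is additive and weight is additive under products, the weight shift $s\mapsto s+m$ (where $m$ is the degree) is multiplicatively compatible; thus on the free graded algebra $\Lambda V$ the décalage filtration is again the multiplicative filtration, now with each generator reweighted by adding its degree. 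Consequently $\Dec B'=\Dec B\otimes_\xi\Lambda V'$, where $V'$ is $V$ placed in pure degree $n$ and pure weight $p+n$.

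Next I would verify the weight bound that makes this an $E_r$-extension. Applying part (1) to $B$,
$$(\Dec W)_{(p+n)-r}B^{n+1}=W_{(p+n)-r-(n+1)}B^{n+1}=W_{p-(r+1)}B^{n+1},$$
which is exactly the target of $\xi$, while $d\circ\xi=0$ is untouched by $\Dec$. So décalage turns each $E_{r+1}$-cofibrant extension of degree $n$ and weight $p$ into an $E_r$-cofibrant extension of degree $n$ and weight $p+n$. Finally, since $\Dec$ is the identity on underlying dga's and on morphisms, with a filtration defined elementwise, it commutes with the sequential colimit, and $\Dec\kk=\kk$ at the base; hence $\Dec A$ is a colimit of $E_r$-cofibrant extensions starting from $\kk$, that is, $E_r$-cofibrant.

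The main obstacle I anticipate is the multiplicativity step in the second paragraph: one must confirm that the elementwise formula of part (1) genuinely reproduces the multiplicative extension filtration on $\Lambda V'$ and nothing coarser. This should follow by describing $W_qA^n$ as the span of monomials $v_{i_1}\cdots v_{i_k}$ with $\sum_j p_{i_j}\leq q$ and noting that the shift $q\mapsto q-n$ with $n=\sum_j m_{i_j}$ reassigns precisely weight $p_{i_j}+m_{i_j}$ to each factor; but this is the compatibility I would check with most care.
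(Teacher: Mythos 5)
Your proof is correct and follows essentially the same route as the paper's: part (1) comes from observing that $d(W_qA)\subset W_{q-(r+1)}A\subset W_{q-1}A$ makes the $d^{-1}$-condition in Deligne's formula vacuous, and part (2) from checking that $\Dec$ sends each $E_{r+1}$-cofibrant extension to an $E_r$-cofibrant extension and commutes with the colimit. The only discrepancy is the pure weight of the decaled extension: you obtain $p+n$, while the paper's proof asserts $p-n$; with the convention $\Dec W_pA^n=W_{p-n}A^n$ fixed by part (1) of the statement, your value $p+n$ is the consistent one (the paper's sign appears to be a slip, and it is immaterial for the conclusion).
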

\begin{proof}
By Lemma $\ref{propietats_rcofs}.(1)$ we have $d(W_pA)\subset W_{p-1}A$. Hence (1) follows. To prove (2) it
suffices to note that if $A\otimes_\xi\Lambda(V)$ is an $E_{r+1}$-cofibrant extension of degree $n$ and weight $p$ of $A$ then
$\Dec A\otimes_\xi\Lambda(\Dec V)$ is an $E_{r}$-cofibrant extension of degree $n$ and weight $p-n$ of $\Dec A$. 
\end{proof}

We next show that $E_r$-cofibrant dga's are cofibrant in the sense of \cite{GNPR}.
\begin{teo}\label{rcofs}
Let $M$ be an $E_r$-cofibrant dga. For any solid diagram
 $$
 \xymatrix{
 &A\ar[d]^w_{\wr}\\
 M\ar@{.>}[ur]^g\ar[r]_f&B
 }
 $$
in which $w$ is an $E_r$-quasi-isomorphism there exists a lifting $g$
 together with an $r$-homotopy $h:wg\simr{r}f$. The morphism $g$ is uniquely defined up to $r$-homotopy.
\end{teo}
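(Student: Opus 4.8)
The plan is to build the lift $g$ and the $r$-homotopy $h$ simultaneously by induction along the cofibrant structure of $M$. Write $M$ as the colimit of a (possibly transfinite) sequence $\kk = M_0 \to M_1 \to \cdots$ of $E_r$-cofibrant extensions, $M_{i+1} = M_i \otimes_{\xi_i} \Lambda V_i$ with $V_i$ concentrated in pure degree $n_i$ and pure weight $p_i$. I would construct compatible filtered dga morphisms $g_i : M_i \to A$ together with $h_i : M_i \to P_r(B)$ satisfying $\delta^0 h_i = w g_i$ and $\delta^1 h_i = f|_{M_i}$, and then pass to the colimit. The base case $M_0 = \kk$ is forced: $g_0 = \eta_A$ and $h_0 = \iota\,\eta_B$ (unit followed by the inclusion $\iota\colon B\to P_r(B)$) satisfy the two conditions because $f$, $w$ and $\eta$ are unital. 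The whole content is therefore the extension step, which I treat one basis generator at a time.

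Fix a generator $v$ of $V_i$, of degree $n$ and weight $p$, and set $z := \xi_i(v) = dv \in W_{p-r}M_i^{n+1}$, a cocycle. Extending $g_i$ and $h_i$ over $v$ amounts to producing $a := g_{i+1}(v) \in W_p A^n$ and $\eta := h_{i+1}(v) \in W_p P_r(B)^n$ with $da = g_i(z)$, $d\eta = h_i(z)$, $\delta^0\eta = wa$ and $\delta^1\eta = f(v)$; everything then extends multiplicatively, the weight conditions being guaranteed by the cofibrant extension structure. Conceptually this is a lift of $f(v)$ through the $E_r$-quasi-isomorphism $\pi := \delta^1\circ \mathrm{pr}_2 \colon A\times_B P_r(B)\to B$ (fibre product of $w$ and $\delta^0$), a lift through $\pi$ being exactly a pair $(g,h)$; but since $\pi$ is not surjective, the existence of $a$ is a genuine obstruction problem rather than a formal lifting against a trivial fibration.

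The heart of the argument, and the step I expect to be the main obstacle, is producing the cocycle primitive $a$, and here the homotopy $h_i$ is indispensable. Since $M_i$ is $E_r$-cofibrant we have $E_0(M_i)=\cdots=E_r(M_i)$ (Lemma \ref{propietats_rcofs}), and the class of $z$ on the $E_r$ page is $d_r$ of the class of $v$; applying $g_i$ gives a cocycle $g_i(z)\in W_{p-r}A^{n+1}$. I would show its class vanishes in $E_{r+1}(A)$ as follows: by $\delta^0 h_i = wg_i$ its image $w g_i(z) = \delta^0 h_i(z)$ is cohomologous, via the acyclicity of $\Lambda(t,dt)$ and the identification $E_r(P_r(B))\cong E_r(B)\otimes\Lambda(t,dt)$ of Lemma \ref{rpathcommutes}, to $\delta^1 h_i(z) = f(z)$; and $f(z) = d(f(v))$ with $f(v)\in W_pB^n$ is $d_r$-exact on $E_r(B)$, so $[w g_i(z)] = 0$ in $E_{r+1}(B)$. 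As $w$ is an $E_r$-quasi-isomorphism, $E_{r+1}(w)$, hence $E_s(w)$ for every $s\geq r+1$, is an isomorphism, so $[g_i(z)] = 0$ in $E_{r+1}(A)$ and $g_i(z)$ is $d_r$-exact on $E_r(A)$ at weight $p$. This yields a first approximation $a_0\in W_pA^n$ with $g_i(z) - da_0$ of strictly lower weight; iterating the same vanishing on the successive pages, and using that the filtration is regular and exhaustive, the errors terminate in each fixed degree and sum to the desired $a\in W_pA^n$ with $da = g_i(z)$. The delicate points are the weight bookkeeping on the $E_r$ page and the convergence of this successive-approximation scheme.

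With $a$ in hand, constructing $\eta$ is the routine remainder: one has $wa, f(v)\in W_pB^n$ and the cocycle $h_i(z)\in W_{p-r}P_r(B)^{n+1}$, and the compatibilities $\delta^0 h_i(z) = d(wa)$ and $\delta^1 h_i(z) = d(f(v))$ hold by construction, so writing $\eta = B(t) + C(t)\,dt$ and solving $d\eta = h_i(z)$ with the prescribed endpoints is a consistent finite linear system, solvable by the standard contraction $\int_0^t$ of $\Lambda(t,dt)$, the weight bounds $b_j\in W_pB$ and $c_k\in W_{p+r}B$ being exactly those dictated by $\sigma[r]$ and the $r$-path filtration of Definition \ref{rpath}. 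Finally, uniqueness up to $r$-homotopy follows by the same technique applied relatively: two solutions $(g,h)$ and $(g',h')$ give $wg \simr{r} wg'$ by concatenating $h$ with the reverse of $h'$, and one lifts this homotopy through $w$ to an $r$-homotopy $g \simr{r} g'$, running the identical obstruction computation for the doubled generators, where the same $E_r$-quasi-isomorphism property of $w$ makes the relevant classes vanish.
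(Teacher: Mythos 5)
Your overall architecture (cell-by-cell induction over the $E_r$-cofibrant extensions, an obstruction class for each generator, uniqueness via the double mapping path) is the right one and matches the paper's treatment of the case $r=0$; the first-stage vanishing computation, showing $[g_i(z)]=0$ in $E_{r+1}^{-(p-r)}(A)$ via the homotopy and $E_{r+1}(w)$, is also correct. The gap is in the step you identify as ``the heart of the argument''. Your successive-approximation scheme requires, at stage $j\geq 1$, that the error term $e_j=g_i(z)-d(\mu_0+\cdots+\mu_{j-1})\in W_{p-r-j}A^{n+1}$ again have vanishing class on the appropriate page at level $p-r-j$, with a primitive lying in $W_pA^n$. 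This does not follow from what you have established: the homotopy $h_i$ and the map $f$ control the class of $g_i(z)$ at level $p-r$ only, and the fact that $e_j$ is globally exact merely says its class dies at $E_\infty$, which produces primitives in $W_{p-r-j+s}$ for some unbounded $s$ --- the wrong direction for the weight bookkeeping. The statement you would actually need, namely $H^{n}(W_pC_r(w))=0$ for the filtered mapping cone, is \emph{false} in general for $r\geq 1$ even when $E_{r+1}(C_r(w))=0$ (a two-term filtered complex with $d_1$ an isomorphism already gives a counterexample), so no amount of iteration purely on the $A$-side and purely in terms of $W$-membership can close this. The paper circumvents exactly this point by proving the theorem only for $r=0$, where regularity upgrades the isomorphism $H(Gr^W_\bullet w)$ to $H^n(W_pC(w))=0$, and then reducing $r+1$ to $r$ by Deligne's d\'{e}calage (Lemmas \ref{rpathcommutes} and \ref{dec_cof}); the d\'{e}calage filtration carries precisely the extra condition on differentials that your error terms $e_j$ lack.

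A second, related problem: the construction of $\eta=h_{i+1}(v)$ is not ``the routine remainder'' once $a$ is chosen. Writing $\eta=wa+\int_0^t h_i(z)+d(c\otimes t)$, the endpoint condition $\delta^1\eta=f(v)$ forces $dc=f(v)-wa-\int_0^1h_i(z)$ with $c\in W_{p+r}B^{n-1}$; the right-hand side is a cocycle in $W_pB^n$ whose cohomology class \emph{depends on the choice of} $a$ (modifying $a$ by a cocycle in $W_pA^n$ changes it by $w$ of that cocycle), so for a badly chosen primitive $a$ no such $c$ exists. The pair $(a,c)$ must be produced simultaneously, which is exactly what the paper's primitive $(a,b)$ of $\wt\theta(v)$ in $W_pC(w)^{n-1}$ accomplishes. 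You gesture at this coupling when you mention the fibre product $A\times_BP_r(B)$, but the execution then decouples the two components. To repair the proof you should either work throughout in the mapping cone equipped with the ($r$-fold) d\'{e}calage filtration and run the weight induction there, or simply follow the paper's route: prove the case $r=0$ in the cone and bootstrap with $\Dec$.
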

\begin{proof}To prove the existence of $g$ and $h$ we use induction over $r\geq 0$.
The case $r=0$ is an adaptation to the filtered setting of the proof of Proposition 10.4 of \cite{GM}.
We shall only indicate the main changes. Assume that $w$ is an $E_0$-quasi-isomorphism.
Let $M=M'\otimes_d\Lambda(V)$ be an $E_0$-cofibrant
extension of degree $n$ and weight $p$
and assume that we have defined $g':M'\to A$ together with a $0$-homotopy 
${h}':wg'\simr{0}fi$, where $i:M'\to M$ denotes the inclusion.\\

Denote by $C(w)$ the mapping cone of $w$, with filtration
$W_pC(w)=W_pA[1]\oplus W_pB$ and differential $d(a,b)=(-da,w(a)+db)$.
For each $v\in V$ define a cocycle
$$\wt\theta(v):=\left(g'(dv),f(v)+\inte h'(dv)\right)\in W_pC(w)^n.$$
The assignation $v\mapsto [\wt\theta(v)]$ defines a map $\theta:V\to H^n(W_{p}C(w))$.
Since $f$ is an $E_0$-quasi-isomorphism and filtrations are regular
we have $H^n(W_pC(w))=0$ for all $p\in\ZZ$.
Therefore $\wt\theta(v)$ must be exact. Hence there exists a linear map
$(a,b):V\to W_pC(w)^{n-1}$ such that $d(a,b)=\wt\theta$.
Define a filtered morphism $g:M\to A$ extending $g'$
and a $0$-homotopy $h:M\to P_0(B)$ extending $h'$ by letting
$$g(v):=a(v)\text{ and }h(v):=\left(f(v)+ \int^t_0h'(dv)+d(b(v)\otimes t)  \right).$$
This ends the case $r=0$.
Let $w$ be an $E_{r+1}$-quasi-isomorphism. By Proposition 1.3.4 of \cite{DeHII} the map $\Dec(w)$ is an $E_{r}$-quasi-isomorphism.
By Lemma $\ref{dec_cof}$ the algebra $\Dec M$
is $E_{r}$-cofibrant.
By induction hypothesis
there exists a morphism $g:\Dec M\to \Dec A$ compatible with $\Dec W$
together with an $r$-homotopy $h:wg\simr{r}f$ with respect to $\Dec W$.
Since $M$ is $E_{r+1}$-cofibrant, by Lemma $\ref{dec_cof}$.(1) we have $\Dec W_pM^n=W_{p-n}M^n$.
It follows that $g$ is compatible with $W$, and that $h$ is an $(r+1)$-homotopy with respect to $W$. This ends the inductive step.
\\

To prove that $g$ is uniquely defined up to an $r$-homotopy it suffices to show that if $f_0,f_1:M\to A$ are such that
$h:wf_0\simr{r}wf_1$, then $f_0\simr{r}f_1$. Define the $r$-double mapping path $\Pp^2_r(w)$ of $w$ via the pull-back diagram
$$\xymatrix{
\ar[d]\Pp^2_r(w)\ar[r]&P_r(B)\ar[d]^{(\delta^0,\delta^1)}\\
A\times A\ar[r]^{w\times w}&B\times B.
}
$$
The map $\overline{w}:P_r(A)\to \Pp^2_r(w)$ induced by $(\delta^0,\delta^1,P_r(w))$ is an $E_r$-quasi-isomorphism.
We have a solid diagram 
$$
 \xymatrix{
 &P_r(A)\ar[d]^{\overline{w}}_{\wr}\\
 M\ar@{.>}[ur]^G\ar[r]_-H&\Pp^2_r(w)
 }
 $$
where $H=(f_0,f_1,h)$.
By the existence of liftings proven above 
there is a morphism $G$ such that $\overline{w}G\simr{r} H$. 
Then $G:f_0\simr{r}f_1$ is an $r$-homotopy from $f_0$ to $f_1$.
\end{proof}

\subsection*{Splittings and Formality}The notion of
$E_r$-formality is a homotopic version of the existence of $r$-splittings, 
and generalizes the classical notion of \cite{Su} and \cite{HS} of formality of dga's,
to the filtered setting.

\begin{defi}\label{def_split}
An \textit{$r$-splitting} of a filtered dga $A$ is a
direct sum decomposition $A=\bigoplus A^{p,q}$ into subspaces $A^{p,q}$ such that for all $p,q\in\ZZ$,
$$d(A^{p,q})\subset A^{p+r,q-r+1},\,A^{p,q}\cdot A^{p',q'}\subset A^{p+p',q+q'}\text{ and }
W_mA^n=\bigoplus_{p\leq m}A^{-p,n+p}.$$
\end{defi}
The $r$th-term of the spectral sequence associated with a filtered dga admits a natural filtration
$$W_pE_r(A):=\bigoplus_{i\leq p}E_r^{-i,*}(A).$$
Hence $(E_r(A),d_r,W)$ is a filtered dga with an $r$-splitting.
The following result is straightforward.
\begin{prop}\label{splitting_ss}
If a filtered dga $(A,d,W)$ admits an $r$-splitting $A=\bigoplus A^{p,q}$ then the differentials of its spectral sequence 
satisfy $d_0=\cdots =d_{r-1}=0$,
and there is an isomorphism of filtered dga's $\pi:(A,d,W)\stackrel{\cong}{\lra} (E_r(A),d_r,W)$,
such that $\pi(A^{-p,n+p})=Gr_p^WA^{n}=E_r^{-p,n+p}(A)$.
\end{prop}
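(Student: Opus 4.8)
The plan is to construct the isomorphism $\pi$ directly from the $r$-splitting $A=\bigoplus A^{p,q}$ and verify that it has the three asserted properties. The key observation is that an $r$-splitting refines the filtration in a way that exactly mirrors the structure of $E_r(A)$: the condition $W_mA^n=\bigoplus_{p\leq m}A^{-p,n+p}$ says that the graded pieces $\mathrm{Gr}^W_pA^n$ are canonically identified with the summands $A^{-p,n+p}$, and the condition $d(A^{p,q})\subset A^{p+r,q-r+1}$ says the differential lands in the next summand in exactly the pattern that $d_r$ (of bidegree $(r,1-r)$) prescribes on the $E_r$-page.

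First I would establish the vanishing of the low differentials. Since $d(A^{p,q})\subset A^{p+r,q-r+1}$, rewriting in filtration indices gives $d(W_pA)\subset W_{p-r}A$, so $d$ strictly decreases weight by $r$. Exactly as in Lemma~$\ref{propietats_rcofs}.(2)$, this forces the induced differentials $d_0=\cdots=d_{r-1}=0$ on the associated spectral sequence, which also yields that $E_0(A)=\cdots=E_r(A)$ as bigraded vector spaces. Consequently $E_r^{-p,n+p}(A)$ is simply $\mathrm{Gr}^W_pA^n$, and the last claimed equality $\pi(A^{-p,n+p})=\mathrm{Gr}^W_pA^n=E_r^{-p,n+p}(A)$ becomes a bookkeeping matter once $\pi$ is defined.

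Next I would define $\pi$ on each summand. Since the $r$-splitting provides a vector-space section of the projection $W_pA\to\mathrm{Gr}^W_pA$, I send $A^{-p,n+p}$ to the corresponding graded piece $\mathrm{Gr}^W_pA^n\cong E_r^{-p,n+p}(A)$ via the identity on representatives; concretely, each element of $A^{-p,n+p}\subset W_pA^n$ maps to its class. This $\pi$ is bijective because the splitting decomposes $A$ into exactly the summands whose direct sum on each filtration level is the associated graded. I then check multiplicativity: the condition $A^{p,q}\cdot A^{p',q'}\subset A^{p+p',q+q'}$ is precisely the bigrading-compatibility needed for $\pi$ to be an algebra map into the bigraded algebra $E_r(A)$, and compatibility with differentials follows because $d$ on $A$ restricted to a summand agrees, under $\pi$, with $d_r$ on $E_r(A)$ — this is guaranteed by the bidegree condition $d(A^{p,q})\subset A^{p+r,q-r+1}$ together with the fact that $d_r$ is induced by $d$ on the $E_r$-page. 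Finally, $\pi$ is filtered because $\pi(W_mA^n)=\bigoplus_{p\leq m}E_r^{-p,n+p}(A)=W_mE_r(A)^n$ by the natural filtration on $E_r(A)$.

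\emph{The main obstacle} is mostly notational rather than conceptual: one must keep the two indexings (the $(p,q)$ bigrading of the splitting versus the $(-i,*)$ convention of the spectral sequence and its filtration $W_pE_r(A)=\bigoplus_{i\leq p}E_r^{-i,*}(A)$) consistent throughout, so that multiplicativity and differential-compatibility align index-for-index. Once the identification $A^{-p,n+p}\cong E_r^{-p,n+p}(A)$ is fixed, each of the three defining properties of an $r$-splitting translates verbatim into one of the three structural requirements (filtration, product, differential) for $\pi$ to be an isomorphism of filtered dga's, so the verification is straightforward and the result is indeed, as stated, immediate.
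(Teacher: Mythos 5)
Your proof is correct, and since the paper offers no proof of this proposition (it is merely declared ``straightforward''), your argument is exactly the canonical one the authors intend: the three defining conditions of an $r$-splitting translate term-by-term into $d(W_pA)\subset W_{p-r}A$ (hence $d_0=\cdots=d_{r-1}=0$ and $E_0(A)=\cdots=E_r(A)=Gr^WA$), and the projection to associated-graded classes gives the filtered dga isomorphism. Nothing further is needed.
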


The following example exhibits the relation between $1$-splittings and the classical formality of dga's.
\begin{example}
Let $A$ be a filtered dga, where $W$ is the trivial filtration $0=W_{-1}A\subset W_0A=A$. 
The bigraded model $M\to A$ of Halperin-Stasheff (see 3.4 of \cite{HS}) is $E_1$-cofibrant,
and $A$ is formal (the dga's $(A,d)$ and $(H(A),0)$ have the same Sullivan minimal model)
if and only if $M$ admits a 1-splitting.
\end{example}

\begin{defi}
A filtered dga $(A,d,W)$ is said to be \textit{$E_r$-formal} if there exists an isomorphism
$(A,d,W)\lra (E_r(A),d,W)$ in the homotopy category $\Ho_r(\Fdga{}{\kk})$.
\end{defi}
In particular if a filtered dga is connected by a string of $E_r$-quasi-isomorphisms 
to a dga admitting an $r$-splitting, then it is $E_r$-formal.
\\

The previous definitions are naturally extended to morphisms.
\begin{defi}\label{def_split_mor}
Let $f:A\to B$ be a morphism of filtered dga's. We say that \textit{$f$ admits an $r$-splitting} if
$A$ and $B$ admit $r$-splittings and $f$ is compatible with them.
\end{defi}

\begin{defi}
A morphism of filtered dga's $f:A\to B$ is said to be \textit{$E_r$-formal} if there exists
a commutative diagram 
$$
\xymatrix{
\ar[d]^{f}(A,d,W)\ar[r]^-\cong&(E_1(A),d_1,W)\ar[d]^{E_1(f)}\\
(B,d,W)\ar[r]^-\cong&(E_1(B),d_1,W)\\
}
$$ in the homotopy category $\Ho_r(\Fdga{}{\kk})$, where the horizontal arrows are isomorphisms.
\end{defi}

\subsection*{Descent of splittings}
The descent of formality of nilpotent dga's from $\CC$ to $\QQ$ is proved in Theorem 12.1 of \cite{Su}.
The proof is based on the fact that the existence of certain grading automorphisms does not depend on the base field.
Following this scheme, we characterize the existence of $r$-splittings 
of finitely generated $E_r$-cofibrant dga's in terms of the existence of liftings of certain $r$-bigrading automorphisms.

\begin{nada}
Let us fix some notations about group
schemes. Given a filtered dga $A$ denote by $\Aut_W(A)$ the set of its filtered automorphisms. Likewise,
denote by $\Aut(E_r(A))$ the set of morphisms of bigraded dga's from $E_r(A)$ to itself.
We have a morphism $E_r:\Aut_W(A)\to \Aut(E_r(A))$.
\\

Let  $\kk\to R$ be a commutative $\kk$-algebra. If $A$ is a dga over $\kk$, its extension of scalars
$A\otimes_{\kk} R$ is a dga over $R$, and the
correspondence
$$R \mapsto \AUT_W{(A)}(R) = \Aut_W(A \otimes_{\mathbf{k}}R)$$
defines a functor
$\AUT_W(A):\mathbf{alg}_\kk \to \mathbf{Gr}$
from the category  $\mathbf{alg}_\kk$  of commutative
$\kk$-algebras, to the category
 $\mathbf{Gr}$ of groups. It is clear that
$
\AUT_W{(A)} (\mathbf{k}) = \Aut_W(A).
$
\end{nada}

\begin{prop}
\label{algebraicos} Let $A$ be a finitely generated $E_r$-cofibrant dga over $\kk$. Then:
\begin{enumerate}[(1)]
\item
$\Aut_W(A)$ is an algebraic matrix group over $\mathbf k$.
\item $\AUT_W(A)$ is an algebraic affine group scheme over $\mathbf k$
represented by 
$\Aut_W(A)$.
\item $E_r$ defines a morphism $\mathbf{E_r} :\AUT_W{(A)} \longrightarrow \AUT(E_r(A))$  of algebraic affine group
schemes.
\item The kernel
$\mathbf{N} := \ker \left( \mathbf{E_r} :\AUT_W(A) \longrightarrow
\AUT(E_r(A)) \right)$
is a unipotent algebraic affine group scheme over $\mathbf k$.
\end{enumerate}
\end{prop}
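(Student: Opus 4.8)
The plan is to exploit the free structure of $A$ furnished by Lemma~\ref{propietats_rcofs}: writing $A=\Lambda(V,W)$ with $V$ finitely generated, every filtered endomorphism is determined by its degree- and filtration-preserving restriction to $V$, and is an automorphism precisely when its linear part $V\to V$ is invertible. Since $A$ is finitely generated each graded piece $A^n$ is finite-dimensional, so for $N$ large all generators lie in the finite-dimensional graded space $A^{\leq N}:=\bigoplus_{n\leq N}A^n$, which is preserved by every filtered automorphism (automorphisms are graded). Restriction thus yields a faithful representation $\Aut_W(A)\hookrightarrow\mathbf{GL}(A^{\leq N})$. For (1) I would then cut out its image by equations: compatibility with $d$ (the identities $d\phi(v)=\phi(dv)$ are polynomial in the matrix entries of $\phi|_{A^{\leq N}}$, since both sides are computed from the structure constants of $A$), compatibility with $W$ (linear vanishing conditions forcing $\phi(W_pV)\subseteq W_pA$), and invertibility of the linear part (non-vanishing of one determinant, an open condition). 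These define a Zariski-closed subgroup of $\mathbf{GL}(A^{\leq N})$, proving $\Aut_W(A)$ is an algebraic matrix group.

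For (2) the point is that all these equations involve only the structure constants of $A$, which lie in $\kk$. Hence for every $R\in\mathbf{alg}_\kk$ the group $\Aut_W(A\otimes_\kk R)$ of filtered $R$-automorphisms of the free $R$-dga $A\otimes R=\Lambda(V\otimes R)$ is exactly the set of $R$-points satisfying these same equations. This identifies $\AUT_W(A)(R)$ with $\Hom_{\kk\text{-alg}}(\Oo(\Aut_W(A)),R)$ naturally in $R$, so $\AUT_W(A)$ is an affine group scheme represented by the coordinate ring of the variety of (1). Applying (1)--(2) verbatim to the bigraded dga $E_r(A)$ — which is finitely generated because $E_0(A)=\cdots=E_r(A)=\Gr A$ by Lemma~\ref{propietats_rcofs}(2) — shows $\AUT(E_r(A))$ is an affine group scheme as well. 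For (3), the assignment $\phi\mapsto E_r(\phi)$ is a natural transformation $\AUT_W(A)\Rightarrow\AUT(E_r(A))$ and a group homomorphism by functoriality of $E_r$; since both sides are representable, Yoneda upgrades it to a morphism $\mathbf{E_r}$ of affine group schemes.

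The main work is (4). The key observation is that for $E_r$-cofibrant $A$ one has $d_0=\cdots=d_{r-1}=0$ (Lemma~\ref{propietats_rcofs}(2)); since passing from page $s$ to page $s+1$ is taking cohomology with respect to $d_s$, a vanishing $d_s$ forces $E_{s+1}(\phi)=E_s(\phi)$ for any filtered automorphism $\phi$. Therefore $E_0(\phi)=E_1(\phi)=\cdots=E_r(\phi)$, and consequently $E_r(\phi)=\mathrm{id}$ if and only if $\Gr(\phi)=E_0(\phi)=\mathrm{id}$. Thus $\mathbf{N}(R)$ is precisely the group of filtered $R$-automorphisms inducing the identity on the associated graded. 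Such a $\phi$ satisfies $(\phi-\mathrm{id})(W_pA)\subseteq W_{p-1}A$; as $W$ is regular and exhaustive it is a finite filtration on each finite-dimensional $A^n$, so $(\phi-\mathrm{id})$ is nilpotent on $A^{\leq N}$ and $\phi$ is unipotent. Choosing a basis of $A^{\leq N}$ adapted to $W$, the faithful representation of (1) sends $\mathbf{N}$ into the group of unipotent matrices stabilizing the associated flag; being a closed subgroup scheme of a unipotent group, $\mathbf{N}$ is unipotent.

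I expect the genuinely conceptual step to be the identity $E_0(\phi)=\cdots=E_r(\phi)$, which is what reduces (4) to the elementary unipotence argument on the finite filtration of $A^{\leq N}$; everything else is bookkeeping. The points requiring care are verifying that the defining conditions in (1) are honestly polynomial and stable under the base change $\kk\to R$ used in (2), and justifying the finite-dimensionality of each $A^n$ (which is where finite generation, together with the grading conventions, is essential).
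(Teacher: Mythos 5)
Your argument is correct, and parts (1)--(3) coincide with the paper's (equally terse) treatment: cut out $\Aut_W(A)$ inside $\mathbf{GL}(A^{\leq N})$ by the polynomial conditions expressing compatibility with $d$, the product and $W$, and observe that these conditions are defined over $\kk$ and stable under base change. The genuine divergence is in (4). The paper first invokes Corollary 15.4 of Borel to know the kernel is an algebraic matrix group, reduces to showing every $f\in\mathbf{N}(\kk)$ is unipotent, and then runs a Jordan-decomposition argument: writing $f=f_s f_u$, splitting $A$ into the eigenvalue-one eigenspace $A_1$ of $f_s$ and a complementary invariant filtered subcomplex $B$, and using $E_r(A)=E_0(A)=E_0(A_1)\oplus E_0(B)$ together with $E_r(f_s)=1$ to force $E_0(B)=0$, hence $B=0$ and $f_s=1$. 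You replace all of this by the observation that, since $d_0=\cdots=d_{r-1}=0$ on $E_\bullet(A\otimes R)$ for every $R$ (Lemma $\ref{propietats_rcofs}$ survives base change), one has $E_r(\phi)=E_0(\phi)=Gr^W(\phi)$ under the canonical identifications, so $\mathbf{N}(R)$ is exactly the group of filtered automorphisms with $(\phi-\mathrm{id})(W_p)\subset W_{p-1}$; finiteness of $W$ on each $A^n$ then realizes $\mathbf{N}$ as a closed subgroup scheme of the unipotent stabilizer of the weight flag. This is more elementary (no Jordan decomposition, no appeal to Borel for the algebraicity of the kernel, which is immediate from your explicit description) and it proves unipotence uniformly on $R$-points rather than only on $\kk$-points; what the paper's eigenspace argument buys is uniformity with the proof of Lemma $\ref{split_autos}$, where the Jordan-decomposition technique is genuinely needed. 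Both proofs rest on the same key input, $E_r(A)=E_0(A)$ for $E_r$-cofibrant $A$. The caveats you flag at the end are shared with the paper: finite generation gives finite-dimensionality of $A^{\leq N}$ only when the generators sit in positive degrees (the case in all applications), and the identification $\Aut(E_r(A)\otimes R)\cong\Aut(E_r(A\otimes R))$ used implicitly in (3) follows from flatness.
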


\begin{proof} 
Since $A$ is finitely generated, for a sufficiently large $N\geq 0$,
 $\Aut_W(A)$ is the closed subgroup of $\mathbf{GL}_N(\kk)$ defined by the
polynomial equations that express compatibility with differentials, products and filtrations.
Thus $\Aut_W(A)$ is an algebraic matrix group. Moreover,
$\AUT_W(A)$ is obviously the algebraic affine group scheme
represented by $\Aut_W(A)$. Hence (1) and (2) are satisfied.
For every commutative $\kk$-algebra $R$, the map
$$
\AUT_W{(A)}(R) = \Aut_W(A \otimes_{\mathbf{k}}R) \longrightarrow \Aut(E_r(A)\otimes_{\mathbf{k}}R) = \AUT(E_r(A))(R)
$$
is a morphism of groups which is natural in $R$. Thus (3)
follows.
Since by (2) both groups
 $\AUT_W(A)$ and $\AUT(E_r(A))$ are
algebraic, and $\kk$ has  zero
characteristic, the kernel $\mathbf{N}$ is represented by an
algebraic matrix group defined over $\kk$ (see \cite{Borel}, Corollary 15.4).
Therefore to prove (4) it suffices to verify that all elements in
$\mathbf{N}(\kk)$ are unipotent.
Given $f \in \mathbf{N}(\mathbf{k})$, consider the multiplicative Jordan decomposition $f=f_s\cdot f_u$ into semi-simple and unipotent parts.
By Theorem 4.4 of \cite{Borel} we have $f_s,f_u\in\AUT_W(A)(\kk)$. 
Since $E_r(f)=1$ and an algebraic group morphism preserves semi-simple and unipotent parts,
it follows that $E_r(f_s)=E_r(f_u)=1$. Let $A_1=\Ker(f_s-I)$ and decompose $A$ into invariant subspaces $A=A_1\oplus B$.
Since $df_s=f_sd$ this decomposition satisfies $d(A_1)\subset A_1$ and $dB\subset B$.
Hence both $A_1$ and $B$ are filtered subcomplexes of $A$
satisfying $d(W_pA_1)\subset W_{p-r}A_1$ and $d(W_pB)\subset W_{p-r}B$. Therefore we have
$$E_r(A)=E_0(A)=E_0(A_1)\oplus E_0(B)=E_r(A_1)\oplus E_r(B).$$
Since $E_r(A)$ contains nothing but the eigenspaces of eigenvalue $1$, we have $E_r(B)=E_0(B)=0$,
and so $B=0$. Therefore $f_s=1$ and $f$ is unipotent.
\end{proof}

\begin{defi}\label{gradingautodef}
Let $\alpha\in\kk^*$ (not a root of unity). The \textit{$r$-bigrading automorphism of $E_r(A)$ associated with $\alpha$} 
is the automorphism $\varphi_\alpha:E_r(A)\to E_r(A)$ defined by
$$\varphi_\alpha(a)=\alpha^{nr+p}a,\text{ for }a\in E_r^{-p,n+p}(A).$$
\end{defi}

\begin{lem}
\label{split_autos}
Let $A$ be a finitely generated $E_r$-cofibrant dga defined over a field $\kk$ of characteristic 0. The following are equivalent:
\begin{enumerate}[(1)]
\item The filtered dga $A$ admits an $r$-splitting.
\item The morphism $E_r:\Aut_W(A)\lra \Aut(E_r(A))$ is surjective.
\item There exists $\alpha\in\kk^*$ (not root of unity) together with an automorphism $\Phi\in \Aut_W(A)$
such that $E_r(\Phi)=\varphi_\alpha$ is the $r$-bigrading automorphism of $E_r(A)$ associated with $\alpha$.
\end{enumerate}
\end{lem}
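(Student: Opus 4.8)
The plan is to prove the cycle of implications $(1)\Rightarrow(2)\Rightarrow(3)\Rightarrow(1)$, using the group-scheme structure established in Proposition~\ref{algebraicos} as the main engine.

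\textbf{The implications $(1)\Rightarrow(2)$ and $(3)\Rightarrow(2)$ are easy.} If $A$ admits an $r$-splitting $A=\bigoplus A^{p,q}$, then by Proposition~\ref{splitting_ss} there is a filtered isomorphism $\pi:(A,d,W)\to(E_r(A),d_r,W)$. Given any automorphism $\psi\in\Aut(E_r(A))$ of bigraded dga's, the composite $\pi^{-1}\psi\pi$ is a filtered automorphism of $A$ whose associated $E_r$-map is $\psi$ (since $\pi$ identifies the $r$-splitting of $A$ with the canonical bigrading of $E_r(A)$, one checks $E_r(\pi)$ is the identity after the natural identification). Hence $E_r$ is surjective, giving $(1)\Rightarrow(2)$. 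The implication $(3)\Rightarrow(2)$ requires nothing, and $(2)\Rightarrow(3)$ is immediate once one observes that $\varphi_\alpha$ is itself a bigraded automorphism of $E_r(A)$ for any $\alpha\in\kk^*$: surjectivity of $E_r$ produces the required lift $\Phi$. So really the content splits as $(1)\Rightarrow(2)$, $(2)\Rightarrow(3)$ (trivial), and the substantial implication $(3)\Rightarrow(1)$.

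\textbf{The heart of the proof is $(3)\Rightarrow(1)$.} Suppose we are given $\alpha\in\kk^*$, not a root of unity, and $\Phi\in\Aut_W(A)$ with $E_r(\Phi)=\varphi_\alpha$. The idea, following Sullivan's grading-automorphism technique, is to use $\Phi$ to produce the eigenspace decomposition defining the splitting. I would diagonalize $\Phi$: since $\Phi$ is a filtered automorphism of the finitely generated $E_r$-cofibrant dga $A$, and its image $\varphi_\alpha$ is semisimple with eigenvalues $\alpha^{nr+p}$, I want to decompose $A$ into the $\alpha^{nr+p}$-eigenspaces of $\Phi$. The obstacle is that $\Phi$ itself need not be semisimple; only $E_r(\Phi)$ is. To handle this I pass to the Jordan decomposition $\Phi=\Phi_s\Phi_u$ inside $\Aut_W(A)$, which is legitimate by Proposition~\ref{algebraicos} (both factors lie in $\Aut_W(A)(\kk)$ by the theory of algebraic groups), and note that $E_r(\Phi_u)\in\mathbf{N}(\kk)$ is unipotent; but since $E_r(\Phi)=\varphi_\alpha$ is already semisimple, $E_r(\Phi_u)=1$ and $E_r(\Phi_s)=\varphi_\alpha$. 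Replacing $\Phi$ by $\Phi_s$, I may assume $\Phi$ is semisimple. Then $A$ decomposes as a direct sum of eigenspaces of $\Phi$, say $A=\bigoplus_\mu A_\mu$ where $A_\mu=\Ker(\Phi-\mu\cdot I)$.

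\textbf{Finally I must check this eigenspace decomposition is an $r$-splitting and indexes correctly.} Define $A^{p,q}$ to be the component of the eigenspace where $\Phi$ acts by $\alpha^{(p+q)r+p}$ inside $A^{p+q}$ (degree $n=p+q$, weight index $-p$), matching the eigenvalue pattern of $\varphi_\alpha$ on $E_r^{-p,n+p}(A)$. Because $\Phi$ is a dga-automorphism, the eigenspaces are compatible with products: $A_\mu\cdot A_{\mu'}\subset A_{\mu\mu'}$, which translates into $A^{p,q}\cdot A^{p',q'}\subset A^{p+p',q+q'}$ since the exponents $nr+p$ add correctly. For the differential, since $d$ commutes with $\Phi$ and $d$ has degree $1$, I must verify $d$ sends the $\mu$-eigenspace in degree $n$ to the appropriate eigenspace in degree $n+1$; here the fact that $\alpha$ is not a root of unity is crucial, ensuring the exponents $nr+p$ determine $(p,q)$ uniquely so the eigenspace labelling is unambiguous, and the bidegree shift $d(A^{p,q})\subset A^{p+r,q-r+1}$ drops out of comparing $E_r(\Phi)=\varphi_\alpha$ with $d_r$ of bidegree $(r,1-r)$. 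The compatibility $W_mA^n=\bigoplus_{p\le m}A^{-p,n+p}$ follows because the filtration on $A$ is recovered from $\Phi$ via its action on the graded pieces, using Lemma~\ref{propietats_rcofs} to control how $E_r(A)=E_0(A)$ as bigraded spaces. The main obstacle throughout is bookkeeping the eigenvalue exponents $nr+p$ against the differential's bidegree and against multiplicativity; the non-root-of-unity hypothesis is exactly what makes this bookkeeping rigid enough to recover a genuine $r$-splitting rather than a coarser decomposition.
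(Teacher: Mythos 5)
Your overall strategy --- (1)$\Rightarrow$(2) via Proposition \ref{splitting_ss}, (2)$\Rightarrow$(3) trivially, and (3)$\Rightarrow$(1) by passing to the semisimple part $\Phi_s$ of the Jordan decomposition and decomposing $A$ into eigenspaces --- is exactly the paper's. There is, however, one concrete gap in your (3)$\Rightarrow$(1): you write $A=\bigoplus_\mu A_\mu$ and then only ever discuss the eigenspaces with eigenvalues of the form $\alpha^{nr+p}$, never showing that the remaining primary components of $\Phi_s$ vanish. Without that step, $\bigoplus A^{p,q}$ is merely a $\Phi_s$-invariant sub-dga carrying an $r$-splitting, not a splitting of $A$ itself. (Note also that over a general field of characteristic $0$ a semisimple operator need not decompose the whole space into eigenspaces, so the correct statement is $A=A'\oplus B$ with $A'=\bigoplus\Ker(\Phi_s-\alpha^{nr+p}I)\cap A^n$ and $B$ the complement corresponding to the remaining factors of the characteristic polynomial; this makes the issue unavoidable.) The paper closes it by the same unipotence argument as in Proposition \ref{algebraicos}(4): both $A'$ and $B$ are filtered subcomplexes with $d(W_p\,\cdot)\subset W_{p-r}(\cdot)$, so $E_r$-cofibrancy gives $E_r(A)=E_0(A)=E_0(A')\oplus E_0(B)$; since $E_r(\Phi_s)=\varphi_\alpha$ acts on $E_r(A)$ only with the eigenvalues $\alpha^{nr+p}$, one gets $E_0(B)=0$, and regularity and exhaustiveness of $W$ then force $B=0$. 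You should supply this.

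A secondary weak point: the compatibility $W_mA^n=\bigoplus_{p\le m}A^{-p,n+p}$ is asserted ("the filtration is recovered from $\Phi$") rather than proved. The paper's argument is short and you should reproduce something like it: for $x\in A^{-p,n+p}$ let $q$ be minimal with $x\in W_qA$; then $x+W_{q-1}A$ is a nonzero class in $E_0^{-q,n+q}=E_r^{-q,n+q}$ (Lemma \ref{propietats_rcofs}(2)), on which $E_r(\Phi)=\varphi_\alpha$ acts both by $\alpha^{nr+q}$ and, via $\Phi(x)=\alpha^{nr+p}x$, by $\alpha^{nr+p}$; hence $(\alpha^q-\alpha^p)\alpha^{nr}x\in W_{q-1}A$ and $q=p$ because $\alpha$ is not a root of unity. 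Your treatment of the differential (bidegree $(r,1-r)$ forced by $d\Phi_s=\Phi_s d$ and rigidity of the exponents $nr+p$) and of multiplicativity is correct in substance and matches the paper.
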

\begin{proof}
By Proposition $\ref{splitting_ss}$ it follows that (1) implies (2). It is trivial that (2) implies (3).
We show that (3) implies (1). Let $\Phi\in \Aut_W(A)$ be such that $E_r(\Phi)=\varphi_\alpha$.
Consider the multiplicative Jordan decomposition $\Phi=\Phi_s\cdot \Phi_u$.
By Theorem 4.4 of \cite{Borel} we have that $\Phi_s,\Phi_u\in\Aut_W(A)$. 
Since $A$ is finitely generated there is a vector space decomposition of the form $A=A'\oplus B$,
where
$$A'=\bigoplus A^{p,q}\text{ with }A^{-p,n+p}:=\Ker(\Phi_s-\alpha^{nr+p}I)\cap A^n$$
and $B$ is the complementary subspace corresponding to the remaining factors of the characteristic polynomial of $\Phi_s$.
Since $dA^n\subset A^{n+1}$ and $d\Phi_s=\Phi_sd$ this decomposition satisfies
$$d(A^{p,q})\subset A^{p+r,q-r+1}\text{ and }dB\subset B.$$
By analogous reasoning used in the proof of Proposition $\ref{algebraicos}$.(4) one concludes that $B=0$.\\

To show that $W_pA=\bigoplus_{i\leq p}A^{-i,*}$ it suffices to see that
$A^{-p,*}\subset W_pA$. Let $x\in A^{-p,n+p}$, and let $q$ be the smallest integer such that $x\in W_qA$.
Then $x$ defines a class $x+W_{q-1}A\in E_r^{-q,n+q}$, and
$$\varphi_\alpha(x+W_{q-1}A)=\alpha^{nr+q}x+W_{q-1}A=\Phi(x)+W_{q-1}A=\alpha^{nr+p}x+W_{q-1}A.$$
It follows that $(\alpha^{q}-\alpha^{p})\alpha^{nr}x\in W_{q-1}A$. Since $x\notin W_{q-1}A$ we have $q=p$, hence $x\in W_pA$.
Since $\Phi$ is multiplicative
$A^{p,q}\cdot A^{p',q'}\subset A^{p+p',q+q'}$, and
the above decomposition
is an $r$-splitting of $A$.
\end{proof}

Based on the Sullivan formality criterion of Theorem 1 of \cite{FeTa},
a descent of formality for morphisms of dga's is proved in Theorem 3.2 of \cite{Roig}.
We follow the same scheme to characterize the existence of $r$-splittings of morphisms of filtered dga's.

\begin{nada}Given a morphism $f:A\to B$ of filtered dga's, denote by $\Aut_W(f)$ the set of pairs $(F^A,F^B)$, where
$F^A\in \Aut_W(A)$ and $F^B\in\Aut_W(B)$ are such that $f F^A=F^Bf$. The set
$\Aut(E_r(f))$ is defined analogously. We have a natural morphism
$E_r:\Aut_W(f)\lra \Aut(E_r(f)).$
\end{nada}

\begin{prop}
\label{algebraicos_map} Let  $f:A\to B$ be a morphism of finitely generated $E_r$-cofibrant dga's over $\kk$.
\begin{enumerate}[(1)]
\item
$\Aut_W(f)$ is an algebraic matrix group over $\mathbf k$.
\item $\AUT_W(f)$ is an algebraic affine group scheme over $\mathbf k$
represented by 
$\Aut_W(f)$.
\item $E_r$ defines a morphism $\mathbf{E_r} :\AUT_W{(f)} \longrightarrow \AUT(E_r(f))$  of algebraic affine group
schemes.
\item The kernel
$\mathbf{N} := \ker \left( \mathbf{E_r} :\AUT_W(f) \longrightarrow
\AUT(E_r(f)) \right)$
is a unipotent algebraic affine group scheme over $\mathbf k$.
\end{enumerate}
\end{prop}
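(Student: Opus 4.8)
The plan is to treat the whole statement as the relative version of Proposition~\ref{algebraicos}, realizing $\Aut_W(f)$ as a closed subgroup of the product $\Aut_W(A)\times\Aut_W(B)$ and transporting each of the four assertions from the two objects $A$ and $B$ to the pair $f$. The only new ingredient beyond Proposition~\ref{algebraicos} is the compatibility condition $fF^A=F^Bf$, which is linear in the matrix entries of $F^A$ and $F^B$.

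First I would establish (1) and (2) together. By Proposition~\ref{algebraicos}.(1) applied to $A$ and to $B$, both $\Aut_W(A)$ and $\Aut_W(B)$ are algebraic matrix groups, hence so is their product inside $\mathbf{GL}_{N}(\kk)\times\mathbf{GL}_{M}(\kk)$ for $N,M$ large enough. Fixing a matrix for $f$, the locus $\Aut_W(f)$ of pairs $(F^A,F^B)$ with $fF^A=F^Bf$ is cut out by a system of linear equations in the entries of $F^A$ and $F^B$, hence is a closed subgroup and an algebraic matrix group, giving (1). For (2), over any commutative $\kk$-algebra $R$ the group $\AUT_W(f)(R)$ consists of the pairs in $\Aut_W(A\otimes_\kk R)\times\Aut_W(B\otimes_\kk R)$ satisfying $(f\otimes R)F^A=F^B(f\otimes R)$, which is defined by the very same polynomial equations; thus $\AUT_W(f)$ is represented by $\Aut_W(f)$, exactly as in Proposition~\ref{algebraicos}.(2).

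Next I would deduce (3) from the functoriality of $E_r$. Applying $E_r$ to each coordinate sends a pair $(F^A,F^B)$ with $fF^A=F^Bf$ to $(E_r(F^A),E_r(F^B))$, and functoriality yields $E_r(f)E_r(F^A)=E_r(F^B)E_r(f)$, so the image lies in $\Aut(E_r(f))$. This assignment is a morphism of groups natural in $R$, hence defines the morphism $\mathbf{E_r}\colon\AUT_W(f)\to\AUT(E_r(f))$ of algebraic affine group schemes.

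Finally, for (4), I would note that $\mathbf{N}=\Ker(\mathbf{E_r})$ consists of the pairs $(F^A,F^B)\in\Aut_W(f)$ with $E_r(F^A)=1$ and $E_r(F^B)=1$; hence $\mathbf{N}$ is a closed subgroup of $\mathbf{N}_A\times\mathbf{N}_B$, where $\mathbf{N}_A$ and $\mathbf{N}_B$ are the kernels attached to $A$ and $B$ by Proposition~\ref{algebraicos}.(4). Since $\kk$ has characteristic $0$, $\mathbf{N}$ is again represented by an algebraic matrix group (\cite{Borel}, Corollary~15.4), so it suffices to show every $(F^A,F^B)\in\mathbf{N}(\kk)$ is unipotent. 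Because $\mathbf{N}_A$ and $\mathbf{N}_B$ are unipotent by Proposition~\ref{algebraicos}.(4), their product is unipotent, and so is the closed subgroup $\mathbf{N}$, which settles (4). I expect this last assertion to be the only genuinely relative step: if one prefers to rerun the Jordan-decomposition argument of Proposition~\ref{algebraicos}.(4) directly on the pair, the delicate point is that the multiplicative Jordan decomposition in the product group is computed coordinatewise, so that the semisimple and unipotent parts $(F^A_s,F^B_s)$ and $(F^A_u,F^B_u)$ still satisfy $fF^A_s=F^B_sf$ and $fF^A_u=F^B_uf$; this is exactly what is guaranteed by \cite{Borel}, Theorem~4.4, since $\Aut_W(f)$ is a closed subgroup of the ambient product. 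Granting this, $E_r(F^A_s)=E_r(F^B_s)=1$ forces $F^A_s=1$ and $F^B_s=1$ by Proposition~\ref{algebraicos}.(4), whence $(F^A,F^B)$ is unipotent.
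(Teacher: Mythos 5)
Your proposal is correct and matches the paper, whose entire proof of this proposition is the remark that it ``follows analogously to that of Proposition~\ref{algebraicos}''; you have simply written out that analogous argument, realizing $\Aut_W(f)$ as the closed subgroup of $\Aut_W(A)\times\Aut_W(B)$ cut out by the linear condition $fF^A=F^Bf$. Your shortcut for (4) --- observing that $\mathbf{N}$ is a closed subgroup of the unipotent group $\mathbf{N}_A\times\mathbf{N}_B$ rather than rerunning the Jordan-decomposition and eigenspace argument on the pair --- is a clean and valid streamlining of the intended argument.
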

\begin{proof}
The proof follows analogously to that of Proposition $\ref{algebraicos}$.
\end{proof}

\begin{lem}\label{split_autos_morf}
Let $f:A\to B$ be a morphism of finitely generated $E_r$-cofibrant dga's over $\kk$. The following are equivalent:
\begin{enumerate}[(1)]
\item The morphism $f:A\to B$ admits an $r$-splitting.
\item The morphism $E_r:\Aut_W(f)\lra \Aut(E_r(f))$ is surjective.
\item There exists $\alpha\in\kk^*$ (not root of unity) together $\Phi \in \Aut_W(f)$
such that $E_r(\Phi)=\varphi_\alpha$ is induced by the level-wise $r$-bigrading automorphism associated with $\alpha$.
\end{enumerate}
\end{lem}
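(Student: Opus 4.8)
The plan is to follow the scheme of the proof of Lemma \ref{split_autos}, replacing the automorphism group of a single filtered dga by the group $\Aut_W(f)$ of compatible pairs $\Phi=(F^A,F^B)$, which satisfy $fF^A=F^Bf$ by definition, and invoking Proposition \ref{algebraicos_map} in place of Proposition \ref{algebraicos}. I would dispatch the two easy implications first. For (1)$\Rightarrow$(2), an $r$-splitting of $f$ gives by Proposition \ref{splitting_ss} isomorphisms of filtered dga's $A\cong E_r(A)$ and $B\cong E_r(B)$ compatible with $f$ and $E_r(f)$; under these identifications any pair of bigraded automorphisms commuting with $E_r(f)$ is already a pair of filtered automorphisms commuting with $f$ whose image under $E_r$ is itself, so $E_r$ is surjective. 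For (2)$\Rightarrow$(3), it suffices to note that the level-wise $r$-bigrading automorphism $\varphi_\alpha$ of Definition \ref{gradingautodef} commutes with $E_r(f)$, since $E_r(f)$ preserves bidegrees; hence it determines an element of $\Aut(E_r(f))$, which lifts by surjectivity.

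The content is in (3)$\Rightarrow$(1). Given $\Phi=(F^A,F^B)\in\Aut_W(f)$ with $E_r(\Phi)=\varphi_\alpha$, I would take the multiplicative Jordan decomposition $\Phi=\Phi_s\cdot\Phi_u$ inside the algebraic group $\Aut_W(f)$ furnished by Proposition \ref{algebraicos_map}. Because $\Aut_W(f)$ is a closed subgroup of $\Aut_W(A)\times\Aut_W(B)$ and Jordan decomposition is computed componentwise in a product, one obtains $\Phi_s=(F^A_s,F^B_s)$ with $F^A_s,F^B_s$ the semisimple parts of $F^A,F^B$ and with $fF^A_s=F^B_s f$. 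Exactly as in the proof of Lemma \ref{split_autos}, the eigenspace decompositions of $F^A_s$ and of $F^B_s$, setting $A^{-p,n+p}:=\Ker(F^A_s-\alpha^{nr+p}I)\cap A^n$ and likewise for $B$, define $r$-splittings of $A$ and of $B$: vanishing of the complementary subspaces follows from the argument of Proposition \ref{algebraicos}.(4), and the identity $W_pA=\bigoplus_{i\leq p}A^{-i,*}$ from the same eigenvalue comparison on the associated graded.

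It remains to check that these two splittings are compatible with $f$, and this is the one place where the morphism structure is genuinely used. From $fF^A_s=F^B_s f$ one gets, for $a\in A^{-p,n+p}$ with $F^A_s(a)=\alpha^{nr+p}a$, that $F^B_s(f(a))=f(F^A_s(a))=\alpha^{nr+p}f(a)$, so $f(a)\in B^{-p,n+p}$; hence $f(A^{p,q})\subset B^{p,q}$ for all $p,q$, which is precisely the compatibility of Definition \ref{def_split_mor}, and $f$ admits an $r$-splitting. The main obstacle to anticipate is justifying that the Jordan decomposition of $\Phi$ stays inside $\Aut_W(f)$ and is taken componentwise, since this is exactly what keeps $F^A_s$ and $F^B_s$ compatible with $f$; it rests on Proposition \ref{algebraicos_map}.(1)--(2) together with the functoriality of Jordan decomposition (Theorem 4.4 of \cite{Borel}).
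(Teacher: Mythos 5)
Your proposal is correct and follows essentially the same route as the paper, which simply states that the proof is analogous to that of Lemma \ref{split_autos}: you carry out exactly that analogy, with the one genuinely new point (that the Jordan decomposition is computed componentwise inside the closed subgroup $\Aut_W(f)\subset\Aut_W(A)\times\Aut_W(B)$, so that $fF^A_s=F^B_s f$ and the resulting splittings are compatible with $f$) correctly identified and justified via Proposition \ref{algebraicos_map} and Theorem 4.4 of \cite{Borel}.
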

\begin{proof}
The proof follows analogously to that of Lemma $\ref{split_autos}$.
\end{proof}

\begin{teo}\label{descens_rsplittings} Let $f:A\to B$ be a morphism of finitely generated  $E_r$-cofibrant dga's 
over $\kk$, and let $\kk\subset\mathbf{K}$ be a field extension.
Then $f$ admits an $r$-splitting if and only if $f_{\mathbf{K}}:=f\otimes_\kk\mathbf{K}$ admits an $r$-splitting.
\end{teo}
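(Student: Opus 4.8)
The plan is to reduce both implications to the automorphism criterion of Lemma~\ref{split_autos_morf} and to exploit the group-scheme structure provided by Proposition~\ref{algebraicos_map}. Write $\mathbf{G}=\AUT_W(f)$, $\mathbf{H}=\AUT(E_r(f))$, and let $\pi=\mathbf{E_r}\colon\mathbf{G}\to\mathbf{H}$ be the morphism of affine algebraic group schemes over $\kk$, whose kernel $\mathbf{N}=\ker(\pi)$ is unipotent. Since extension of scalars along the flat map $\kk\to\mathbf{K}$ commutes with the formation of $E_r$ and with the representing objects, one has $\mathbf{G}(\mathbf{K})=\Aut_W(f_{\mathbf{K}})$ and $\mathbf{H}(\mathbf{K})=\Aut(E_r(f_{\mathbf{K}}))$, and $\pi(\mathbf{K})$ is the map $E_r$ attached to $f_{\mathbf{K}}$ (which is again a morphism of finitely generated $E_r$-cofibrant dga's over $\mathbf{K}$, so Lemma~\ref{split_autos_morf} applies over $\mathbf{K}$). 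The direct implication is then immediate: if $f$ admits an $r$-splitting, Lemma~\ref{split_autos_morf} yields $\alpha\in\kk^*$ (not a root of unity) and $\Phi\in\Aut_W(f)$ with $E_r(\Phi)=\varphi_\alpha$, and the base-changed automorphism $\Phi\otimes_\kk\mathbf{K}\in\mathbf{G}(\mathbf{K})$ still satisfies $E_r(\Phi\otimes_\kk\mathbf{K})=\varphi_\alpha$, so $f_{\mathbf{K}}$ admits an $r$-splitting.

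For the converse I would argue by descent of a torsor. Fix once and for all an $\alpha\in\kk^*$ that is not a root of unity; this is possible since $\kk$ has characteristic zero and hence contains $\QQ$. The $r$-bigrading automorphism $\varphi_\alpha$ is then a $\kk$-point of $\mathbf{H}$. Assuming $f_{\mathbf{K}}$ admits an $r$-splitting, Lemma~\ref{split_autos_morf} applied over $\mathbf{K}$ shows that $\pi(\mathbf{K})\colon\mathbf{G}(\mathbf{K})\to\mathbf{H}(\mathbf{K})$ is surjective; in particular $\varphi_\alpha$, viewed in $\mathbf{H}(\mathbf{K})$, lifts to some $\Phi_{\mathbf{K}}\in\mathbf{G}(\mathbf{K})$. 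It then suffices to produce a lift of $\varphi_\alpha$ already over $\kk$, since Lemma~\ref{split_autos_morf} would conclude.

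To this end, consider the scheme-theoretic fibre $\mathbf{G}_\alpha:=\pi^{-1}(\varphi_\alpha)$, a closed subscheme of $\mathbf{G}$ defined over $\kk$ (the fibre product of $\pi$ with the $\kk$-point $\varphi_\alpha$). Right translation by $\mathbf{N}$ preserves $\mathbf{G}_\alpha$ and acts simply transitively on its geometric points, since any two lifts of $\varphi_\alpha$ differ by an element of $\ker(\pi)$. The existence of $\Phi_{\mathbf{K}}$ shows that $\mathbf{G}_\alpha\times_\kk\mathbf{K}\neq\emptyset$, whence $\mathbf{G}_\alpha\neq\emptyset$, so $\mathbf{G}_\alpha$ is a nonempty $\mathbf{N}$-torsor over $\kk$. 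The decisive point is that $\mathbf{N}$ is unipotent and $\kk$ has characteristic zero: such a group is an iterated extension of copies of $\mathbb{G}_a$, and since $H^1(\kk,\mathbb{G}_a)=0$, dévissage gives that $H^1(\kk,\mathbf{N})$ is trivial, i.e. every $\mathbf{N}$-torsor over $\kk$ has a rational point. Hence $\mathbf{G}_\alpha(\kk)\neq\emptyset$; choosing $\Phi\in\mathbf{G}_\alpha(\kk)=\Aut_W(f)$ we get $E_r(\Phi)=\varphi_\alpha$, and Lemma~\ref{split_autos_morf} furnishes an $r$-splitting of $f$.

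The main obstacle is exactly this triviality of the torsor $\mathbf{G}_\alpha$: surjectivity of $\pi$ on $\mathbf{K}$-points does not formally imply surjectivity on $\kk$-points, and what makes the descent go through is the unipotence of the kernel $\mathbf{N}$ together with the characteristic-zero hypothesis, which forces $H^1(\kk,\mathbf{N})=1$. By contrast, the compatibility of $E_r$ and of the representing group schemes with the field extension, and the reduction of everything to the fixed automorphism $\varphi_\alpha$, are routine verifications.
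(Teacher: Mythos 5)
Your proof is correct and follows essentially the same route as the paper: both implications reduce to the automorphism criterion of Lemma~\ref{split_autos_morf}, and the converse hinges on Proposition~\ref{algebraicos_map} together with the vanishing of $H^1$ of the unipotent kernel $\mathbf{N}$ in characteristic zero. The only (harmless) cosmetic difference is that you phrase the descent as triviality of the fibre torsor $\pi^{-1}(\varphi_\alpha)$, while the paper first reduces to $\mathbf{K}$ algebraically closed and invokes the exact cohomology sequence of Waterhouse to get surjectivity of $\Aut_W(f)\to\Aut(E_r(f))$ on $\kk$-points.
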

\begin{proof}
We may assume that $\mathbf{K}$ is algebraically closed.
If $f_{\mathbf{K}}$ admits an $r$-splitting, 
the map 
$
\AUT_W(f)(\mathbf{K}) \longrightarrow \AUT(E_1(f))(\mathbf{K})
$
is surjective by Lemma $\ref{split_autos}$. From Section 18.1 of \cite{Waterhouse} there is an
exact sequence of groups
$$
1 \longrightarrow \mathbf{N}(\mathbf{\kk}) \longrightarrow
\AUT_W(f)(\kk) \longrightarrow \AUT(E_1(f)) (\kk)
\longrightarrow H^1(\mathbf{K}/\kk, \mathbf{N})
\longrightarrow \dots
$$
where $\mathbf{N}$ is unipotent by Proposition $\ref{algebraicos_map}$.
Since $\mathbf{\kk}$ has characteristic zero the group
$H^1(\mathbf{K}/\kk, \mathbf{N})$ is trivial (see 
Example 18.2.e of \cite{Waterhouse}). 
This gives the exact sequence
$$
1 \rightarrow \mathbf{N}(\mathbf{\kk}) \longrightarrow \Aut_W(f)
\longrightarrow \Aut (E_1(f)) \longrightarrow 1.
$$
In particular the middle arrow is surjective. Hence $f$ admits a $1$-splitting by 
Lemma $\ref{split_autos_morf}$.
 \end{proof}

\section{Homotopy Theory of Mixed Hodge Diagrams}\label{homotopytheory}
We prove the existence of minimal models of mixed Hodge diagrams,
as an adaptation of the classical construction of Sullivan's minimal models.
Using Deligne's splitting of mixed Hodge structures we show that these models admit a 1-splitting,
leading to a result of $E_1$-formality of mixed Hodge diagrams.
\subsection*{Mixed Hodge diagrams}
Throughout this section we let
$I=\{0\to 1\leftarrow 2\to\cdots\leftarrow s\}$ be a finite category of zig-zag type and fixed length $s$.
The following is a multiplicative version of the original notion of mixed Hodge complex (see 8.1 of \cite{DeHIII}).
\begin{defi}
A \textit{mixed Hodge diagram (of dga's over $\QQ$ of type $I$)} consists in:
\begin{enumerate}[(i)]
\item  a filtered dga $(A_{\QQ},W)$ over $\QQ$,
\item  a bifiltered dga $(A_{\CC},W,F)$ over $\CC$, together with
\item an $E_1$-quasi-isomorphism
$\varphi_u:(A_i,W)\to (A_j,W)$ over $\CC$, for each $u:i\to j$ of $I$,
with $A_0=A_\QQ\otimes\CC$ and $A_s=A_\CC$.
\end{enumerate}
In addition, the following axioms are satisfied:
\begin{enumerate}
\item [($\text{MH}_0$)] The weight filtrations $W$ are regular and exhaustive. The Hodge filtration $F$ is biregular.
The cohomology $H(A_\QQ)$ has finite type.
\item [($\text{MH}_1$)] For all $p\in\ZZ$, the differential of $Gr_p^WA_\CC$ is strictly compatible with the filtration $F$.
\item [($\text{MH}_2$)] For all $n\geq 0$ and all $p\in\ZZ$, the filtration $F$ induced on $H^n(Gr^W_pA_{\CC})$ defines a pure Hodge structure of
weight $p+n$ on $H^n(Gr^W_pA_\QQ)$.
\end{enumerate}
Such a diagram is denoted as
$$A=\left((A_\QQ,W)\stackrel{\varphi}{\dashleftarrow\dashrightarrow}(A_\CC,W,F)\right).$$
\end{defi}
Note that axiom ($\text{MH}_2$) implies that for all $n\geq 0$ the triple $(H^n(A_\QQ),\Dec W,F)$ is a mixed Hodge structure over $\QQ$.

\begin{defi}A \textit{pre-morphism of mixed Hodge diagrams $f:A\dashrightarrow B$} consists in:
\begin{enumerate}[(i)]
\item  a morphism of filtered dga's $f_\QQ:(A_\QQ,W)\to (B_\QQ,W)$ over $\QQ$,
\item a morphism of bifiltered dga's $f_\CC:(A_\CC,W,F)\to (B_\CC,W,F)$ over $\CC$, and
\item  a family of morphisms of filtered dga's $f_i:(A_i,W)\to (B_i,W)$ over $\CC$, for each $i\in I$, with
$f_0=f_\QQ\otimes\CC$ and $f_s=f_\CC$.
\end{enumerate}
\end{defi}

\begin{defi}
A pre-morphism $f:K\dashrightarrow L$ is said to be a \textit{quasi-isomorphism} if
$f_\QQ$, $f_\CC$ and $f_i$ are quasi-isomorphisms (the induced morphisms $H^*(f_\QQ)$, $H^*(f_i)$ and $H^*(f_\CC)$ are isomorphisms).
\end{defi}

The following result is an easy consequence of Scholie 8.1.9 of \cite{DeHIII},
stating that the spectral sequences associated with the Hodge and the weight filtrations degenerate
at the stages $E_1$ and $E_2$ respectively.
\begin{lem}\label{quis_es_we}
Let $f:A\dashrightarrow B$ be a quasi-isomorphism of mixed Hodge diagrams. Then
$f_\QQ$ and $f_i$ are $E_1$-quasi-isomorphisms and $f_\CC$ is an $E_{1,0}$-quasi-isomorphism 
(the induced morphisms $E_2(f_\QQ)$, $E_2(f_i)$ for all $i\in I$ and $E_2(Gr^p_Ff_\CC)$ for all $p\in\ZZ$ are isomorphisms).
\end{lem}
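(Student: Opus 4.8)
The plan is to deduce the statement from two standard inputs: the strictness of morphisms of mixed Hodge structures (Théorème 2.3.5 of \cite{DeHII}) and the degeneration properties collected in Scholie 8.1.9 of \cite{DeHIII}. The starting observation is that, since $f$ is a pre-morphism, the maps $f_\QQ$, $f_\CC$ and $f_i$ are compatible with the weight and Hodge filtrations and with the comparison quasi-isomorphisms $\varphi$. Passing to cohomology, the induced map $H^n(f_\QQ)$ is therefore a morphism of the mixed Hodge structures $(H^n(A_\QQ),\Dec W,F)\to(H^n(B_\QQ),\Dec W,F)$ provided by axiom $(\text{MH}_2)$. As $f_\QQ$ is a quasi-isomorphism, $H^n(f_\QQ)$ is bijective on underlying $\QQ$-vector spaces, hence an isomorphism of mixed Hodge structures. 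By strictness this upgrades to the assertion that $Gr^W_k H^n(f_\QQ)$, $Gr^p_F H^n(f_\CC)$ and their common refinement are all isomorphisms, and moreover that each Hodge-graded map $Gr^p_F H^n(f_\CC)$ is a filtered isomorphism for $W$.

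For the weight components I would first treat $f_\QQ$. By the degeneration of the weight spectral sequence at $E_2$ (Scholie 8.1.9), the term $E_2(A_\QQ,W)$ is canonically the associated graded of $H(A_\QQ)$ for the weight filtration, and under this identification $E_2(f_\QQ)$ becomes $Gr^W H(f_\QQ)$, which is an isomorphism by the previous paragraph; thus $f_\QQ$ is an $E_1$-quasi-isomorphism. To reach the remaining $f_i$ and the weight part of $f_\CC$ I would propagate along the diagram: each comparison $\varphi_u$ is an $E_1$-quasi-isomorphism, so $E_2(\varphi_u)$ is an isomorphism on both $A$ and $B$, and the relations $f_j\varphi_u=\varphi_u f_i$ together with $f_0=f_\QQ\otimes\CC$ force $E_2(f_i)$ to be an isomorphism for every $i$ of the connected category $I$, in particular $E_2(f_\CC,W)$.

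The delicate point, and the main obstacle, is the $E_{1,0}$ condition on $f_\CC$, namely that $E_2(Gr^p_F f_\CC,W)$ is an isomorphism for each $p$; here both degenerations must be used simultaneously. Degeneration of the Hodge spectral sequence at $E_1$ (which holds because axiom $(\text{MH}_1)$ makes the differential of $Gr^W A_\CC$ strict for $F$) identifies $H^n(Gr^p_F A_\CC)$ with $Gr^p_F H^n(A_\CC)$, so that $H^n(Gr^p_F f_\CC)$ is $Gr^p_F H^n(f_\CC)$, a strict isomorphism for $W$ by the first paragraph. At the same time, applying $Gr^p_F$ to the weight spectral sequence of $A_\CC$—legitimate since $(\text{MH}_1)$ and $(\text{MH}_2)$ make its $E_1$-page a complex of Hodge structures, on which $Gr^p_F$ is exact—shows that the weight spectral sequence of $Gr^p_F A_\CC$ again degenerates at $E_2$. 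Combining the two, $E_2(Gr^p_F f_\CC,W)$ is identified with $Gr^W$ of the filtered isomorphism $Gr^p_F H(f_\CC)$, hence is an isomorphism. The crux is precisely this reconciliation: the bare quasi-isomorphism controls only the abutments, and it is the double strictness of morphisms of mixed Hodge structures, fed into the two degenerations, that forces the associated $E_2$-terms—rational, complex, and Hodge-graded—to match.
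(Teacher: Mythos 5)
Your write-up follows the route the paper intends: the paper offers no proof beyond declaring the lemma ``an easy consequence of Scholie 8.1.9 of \cite{DeHIII}'', and your fleshing-out --- degeneration of the weight spectral sequence at $E_2$ (also after applying $Gr_F$) and of the Hodge spectral sequence at $E_1$, fed by the strictness of morphisms of mixed Hodge structures to upgrade the bijections $H^n(f)$ to (bi)filtered isomorphisms --- is exactly the standard way to extract the statement from Deligne. The identifications $E_2(f_\QQ,W)\cong Gr^WH(f_\QQ)$ and $E_2(Gr_F^pf_\CC,W)\cong Gr^WGr_F^pH(f_\CC)$ are correct, and you are right that the entire content of the lemma is concentrated in the strictness step.

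One step, however, rests on a false premise. You assert that ``since $f$ is a pre-morphism, the maps \dots are compatible \dots with the comparison quasi-isomorphisms $\varphi$.'' By the paper's definitions a pre-morphism carries no compatibility with the $\varphi_u$ at all --- that requirement is precisely what upgrades a pre-morphism to a morphism, and the lemma is stated for quasi-isomorphisms, which are pre-morphisms ($f:A\dashrightarrow B$). This compatibility is not cosmetic: it is what makes $H^n(f_\QQ)$ (and $H^n(f_\CC)$) a morphism of mixed Hodge structures, because the Hodge filtration on $H^n(A_\QQ)\otimes\CC$ is transported from $H^n(A_\CC)$ through the isomorphisms $H^n(\varphi_u)$. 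Without it you only know that $H^n(f_\QQ)$ is a bijection compatible with $\Dec W$, and a bijective filtered map need not be a filtered isomorphism, so the strictness you rely on is exactly what you cannot invoke. Either the hypothesis must be read as requiring commutation with the $\varphi_u$ at least after passing to cohomology --- which does hold in every application in the paper, since the lemma is only used for morphisms and ho-morphisms, and $1$-homotopic maps agree on $E_2$ and hence on filtered cohomology --- or this point needs a separate argument. A smaller imprecision: the $E_1$-degeneration of the Hodge spectral sequence of $A_\CC$ itself is not a direct consequence of ($\text{MH}_1$) (which concerns $Gr^WA_\CC$) but of the full Scholie 8.1.9 via the two-filtrations lemma; since you cite the Scholie as an input anyway, this is harmless.
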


\begin{defi}A \textit{morphism of mixed Hodge diagrams} is a pre-morphism $f:A\dashrightarrow B$ such that 
for all $u:i\to j$ of $I$ the diagram
$$
\xymatrix{
(A_i,W)\ar[d]_{f_i}\ar[r]^{\varphi_u^A}&(A_j,W)\ar[d]^{f_j}\\
(B_i,W)\ar[r]^{\varphi_u^B}&(B_j,W)
}
$$
commutes.  We denote such a morphism by $f:A\to B$.
\end{defi}
Denote by $\MHD$ the category of mixed Hodge diagrams over $\QQ$ of a fixed type $I$ and by $\Ho(\MHD)$
the localized category of mixed Hodge diagrams with respect to the class of quasi-isomorphisms.
By Lemma $\ref{quis_es_we}$, the forgetful functor
$U_\QQ:\MHD\lra \Fdga{}{\QQ}$
defined by sending every mixed Hodge diagram $A$ to the filtered dga $(A_\QQ,W)$
sends quasi-isomorphisms
of mixed Hodge diagrams to $E_1$-quasi-isomorphisms of filtered dga's.
Hence there is an induced functor
$$U_\QQ:\Ho\left(\MHD\right)\lra \Ho_1\left(\Fdga{}{\QQ}\right).$$

For the construction of minimal models we shall need a broader class of maps between mixed Hodge diagrams,
defined by level-wise morphisms commuting only up to $1$-homotopy. 

\begin{defi}\label{homorphism}A \textit{ho-morphism of mixed Hodge diagrams} is given by a pre-morphism $f:A\dashrightarrow B$, together with
a family of $1$-homotopies (see Definition $\ref{rhomotopy}$) $F_u:f_j\varphi_u^A\simeq \varphi_u^Bf_i$ for all $u:i\to j$ of $I$,
making the diagram
$$
\xymatrix{
(A_i,W)\ar@{=>}[dr]^{F_u}\ar[d]_{f_i}\ar[r]^{\varphi_u^A}&(A_j,W)\ar[d]^{f_j}\\
(B_i,W)\ar[r]^{\varphi_u^B}&(B_j,W)
}
$$
$1$-homotopy commute. We denote such a ho-morphism by $f:A\hto B$.
\end{defi}
In general, ho-morphisms cannot be composed. Therefore unlike (strictly commutative) morphisms, they do not define a category.
Every morphism is a ho-morphism with trivial homotopies.
In a subsequent paper we will show how homotopy classes of ho-morphisms between minimal objects define a category
equivalent to the homotopy category of mixed Hodge diagrams.  
\\

We introduce the mapping cone of a ho-morphism of mixed Hodge diagrams and show that under the choice of certain filtrations, the mapping cone
is a mixed Hodge complex. 
\begin{defi} Let $f:A\to B$ be a morphism of filtered dga's. The \textit{$r$-cone of $f$} is the filtered complex $C_r(f)$ defined by
$$W_pC_r(f):=W_{p-r}A^{n+1}\oplus W_pB^n,
\text{ with }d=(-da,f(a)+db).$$ 
For a bifiltered morphism $f:A\to B$, the
 \textit{$(r,s)$-cone} $C_{r,s}(f)$ is defined analogously:
$$W_pF^qC_r(f):=W_{p-r}F^{q+s}A^{n+1}\oplus W_pF^qB^n.$$
\end{defi}
\begin{nada}
Let $f:A\hto B$ be a ho-morphism of mixed Hodge diagrams.
For each $u:i\to j$ of $I$,
the 1-homotopy of filtered dga's $F_u:A_i\to P_1(B_j)$ from $f_j\varphi_u^A$ to $\varphi_u^Bf_i$ of the ho-morphism $f$
gives rise to a homotopy 
$$W_p\inte F_u:W_pA_i\lra W_{p+1}B_j[-1]$$
at the level of underlying complexes of vector spaces (see \cite{GM}, X.10.3).
This allows to define filtered morphisms $\varphi_u^f:C_1(f_i)\to C_1(f_j)$ by letting
$$(a,b)\mapsto(\varphi_u^A(a),\varphi_u^B(b)+\inte F_u(a))$$
\end{nada}
\begin{defi}
The \textit{mixed cone} of a ho-morphism $f:A\hto B$ of mixed Hodge diagrams is the diagram of filtered complexes given by
$$C(f)=\left((C_1(f_\QQ),W)\stackrel{\varphi^f}{\dashleftarrow\dashrightarrow}(C_{1,0}(f_\CC),W,F)\right).$$
\end{defi}

\begin{lem}[cf. \cite{PS}, Theorem 3.22]\label{mixedcone}
Let $f:A\hto B$ be a ho-morphism of mixed Hodge diagrams.
The mixed cone of $f$ is a mixed Hodge complex.
\end{lem}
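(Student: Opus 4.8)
```latex
The plan is to verify that the mixed cone $C(f)$ satisfies the defining axioms $(\text{MH}_0)$, $(\text{MH}_1)$ and $(\text{MH}_2)$ of a mixed Hodge complex (forgetting the multiplicative structure, since a cone of dga's is only a complex). The essential observation is that the cone construction is compatible with taking associated graded pieces: since the filtration on $C_1(f_i)$ places $A$ with a shift by $r=1$ in the weight, the maps $\varphi_u^f$ on associated graded only involve $E_1$ of the $\varphi_u$ together with the induced homotopies, which vanish on $E_1$ by the compatibility built into Lemma~\ref{rpathcommutes}.

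First I would check $(\text{MH}_0)$: regularity and exhaustiveness of $W$ on $C_1(f_i)$, as well as biregularity of $F$ on $C_{1,0}(f_\CC)$, follow directly from the corresponding properties of $A_i$ and $B_i$ together with the finite shifts in the definition of the $r$-cone. Finiteness of $H(C_1(f_\QQ))$ follows from the long exact sequence of the cone and the finiteness hypotheses on $H(A_\QQ)$ and $H(B_\QQ)$. Next, for the $E_1$-quasi-isomorphism requirement on the $\varphi_u^f$, the key computation is that on the level of associated graded complexes the homotopy term $\int_0^1 F_u$ contributes nothing: on $Gr^W$ the map $\varphi_u^f$ is, up to the shift, the direct sum $Gr^W(\varphi_u^A)\oplus Gr^W(\varphi_u^B)$, so its $E_1$-page is the cone of $E_1(\varphi_u^A)\to E_1(\varphi_u^B)$ via the five-lemma applied to the associated long exact sequences. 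Since each $\varphi_u$ is an $E_1$-quasi-isomorphism of mixed Hodge diagrams, $\varphi_u^f$ is as well.

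For $(\text{MH}_1)$ and $(\text{MH}_2)$ I would argue that $Gr_p^W C_{1,0}(f_\CC)$ decomposes as a direct sum of two graded pieces coming from $A_\CC$ (shifted in weight by $1$ and in degree by $1$) and from $B_\CC$. Strictness of the differential with respect to $F$ on this graded object then reduces to the strictness already assumed for $A_\CC$ and $B_\CC$ in their own $(\text{MH}_1)$; the degree and weight shifts are uniform across the direct summands, so they do not interfere with $F$-strictness. Likewise, the purity statement $(\text{MH}_2)$ for $H^n(Gr_p^W C_{1,0}(f_\CC))$ follows from the long exact sequence in cohomology of the graded cone together with the purity of the Hodge structures on $H^*(Gr^W A_\QQ)$ and $H^*(Gr^W B_\QQ)$; here the compatibility of the weight and degree shifts is exactly what guarantees that the two Hodge structures entering each cohomology group of the cone carry the same weight $p+n$, so that the connecting maps are morphisms of Hodge structures and the extension remains pure.

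The main obstacle I expect is verifying that the morphisms $\varphi_u^f$ are genuinely well-defined filtered morphisms and that the homotopy correction term $\int_0^1 F_u$ lands in the correct weight: one must check that $W_p \int_0^1 F_u$ maps $W_p A_i$ into $W_{p+1}B_j$, matching the weight shift in the definition of the cone, which is precisely where the filtration $\sigma[1]$ on $\Lambda(t,dt)$ (with $dt$ of weight $-1$) enters and the choice $r=1$ is forced. Once this bookkeeping is confirmed, the rest is a matter of transporting the three mixed Hodge complex axioms through the direct-sum decomposition of the graded cone, which is routine given the analogous axioms for $A$ and $B$.
```
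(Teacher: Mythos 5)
Your proposal is correct and follows essentially the same route as the paper: the five lemma applied to the short exact sequences $0\to B_i\to C(f_i)\to A_i[1]\to 0$ gives that the $\varphi_u^f$ are $E_1$-quasi-isomorphisms, and the direct-sum decomposition $Gr_p^WC(f_\CC)=Gr_{p-1}^WA_\CC[1]\oplus Gr_p^WB_\CC$ (valid because $f_\CC$ shifts the weight down by one and so dies on the associated graded) reduces $(\text{MH}_1)$ and $(\text{MH}_2)$ to the corresponding axioms for $A$ and $B$, with the degree and weight shifts cancelling to give purity of weight $p+n$. The only imprecision is your claim that the homotopy term $\inte F_u$ vanishes on $Gr^W$ — since it maps $W_{p-1}A_i$ only into $W_pB_j$, it may survive as an off-diagonal component of $Gr_p^W\varphi_u^f$ — but this is harmless because the five-lemma argument needs only the map of short exact sequences, not the splitting.
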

\begin{proof}
Consider the commutative diagram with exact rows
$$
\xymatrix{
0\ar[r]&\ar[d]^{\varphi_u^B}B_i\ar[r]&C(f_i)\ar[d]^{\varphi_u^{f}}\ar[r]&A_i[1]\ar[d]^{\varphi_u^A}\ar[r]&0\\
0\ar[r]&B_j\ar[r]&C(f_j)\ar[r]&A_j[1]\ar[r]&0\\
}
$$
By the five lemma, $\varphi_u^f$ is an $E_1$-quasi-isomorphism. Condition ($\text{MH}_0$) is trivial.
For all $p\in\ZZ$,
$$Gr_p^WC(f_\CC)=Gr_{p-1}^WA_\CC[1]\oplus Gr_p^WB_\CC.$$
Hence at the graded level, the contribution of $f_\CC$ to the differential of $C(f_\CC)$ vanishes.
Therefore we have a direct sum decomposition of complexes compatible with the Hodge filtration $F$,
and ($\text{MH}_1$) and ($\text{MH}_2$) follow.
\end{proof}

\subsection*{Minimal models}
The following technical results will be of use for the construction of minimal models of mixed Hodge diagrams.
Let us first recall Deligne's splitting (\cite{DeHII}, 1.2.11, see also \cite{GS}, Lemma 1.12). This is a global decomposition
for any given mixed Hodge structure, which generalizes the decomposition of pure Hodge structures.
\begin{lem}[\cite{DeHII}, 1.2.11]\label{Desplitting}
Let $(V,W,F)$ be a mixed Hodge structure defined over $\kk$. Then $V_\CC=V\otimes_\kk\CC$ admits
a direct sum decomposition
$V_\CC=\bigoplus_{p,q} I^{p,q}$ such that the filtrations $W$ and $F$ defined on $V_\CC$ are given by
$$W_mV_\CC=\bigoplus_{p+q\leq m} I^{p,q}\text{ and }F^lV_\CC=\bigoplus_{p\geq l} I^{p,q}.$$
The above decomposition is functorial for morphisms of mixed Hodge structures.
\end{lem}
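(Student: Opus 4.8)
The plan is to exhibit the bigrading explicitly by means of Deligne's canonical formula and then to read off the two compatibility assertions and functoriality. Write $v\mapsto\bar v$ for the complex conjugation of $V_\CC=V\otimes_\kk\CC$ relative to the $\kk$-rational structure $V$, and let $\bar F$ denote the conjugate Hodge filtration. Following \cite{DeHII}, I would set
$$I^{p,q}=\left(F^p\cap W_{p+q}\right)\cap\left(\bar F^q\cap W_{p+q}+\sum_{i\geq 1}\bar F^{q-i}\cap W_{p+q-i-1}\right),$$
where the asymmetric correction terms in the second factor lie in $W_{p+q-1}$ and are engineered so that $W$ and $F$ are split exactly while $\bar F$ is respected only modulo lower weights.

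The core of the argument is a projection lemma, which I would establish by a simultaneous induction on the length of the weight filtration. The claim is that for $p+q=m$ the canonical map $W_mV_\CC\to Gr^W_mV_\CC$ restricts to an isomorphism of $I^{p,q}$ onto the Hodge component $H^{p,q}$ of the pure weight-$m$ structure that $F$ induces on $Gr^W_m$ by the defining axiom of a mixed Hodge structure. Surjectivity is clear, since the correction terms die in the graded piece and $F^p\cap\bar F^q$ surjects onto $H^{p,q}$; injectivity is the point at which one uses the shape of the correction terms together with the inductive description of $W_{m-1}$. Granting the lemma, the short exact sequences $0\to W_{m-1}\to W_m\to Gr^W_m\to 0$ yield $W_mV_\CC=W_{m-1}\oplus\bigoplus_{p+q=m}I^{p,q}$, and hence, by induction, $W_mV_\CC=\bigoplus_{p+q\leq m}I^{p,q}$. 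Taking $m$ large gives the direct sum $V_\CC=\bigoplus_{p,q}I^{p,q}$ and the compatibility with $W$; the inclusion $I^{p,q}\subseteq W_{p+q}$ needed throughout is immediate from the definition.

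It remains to treat $F$ and functoriality. One inclusion is formal: $I^{p,q}\subseteq F^p\subseteq F^l$ for $p\geq l$, so $\bigoplus_{p\geq l}I^{p,q}\subseteq F^l$. For the reverse inclusion I would argue by descending induction on the weight, invoking the strictness of $F$ with respect to $W$ that is built into the notion of a mixed Hodge structure, to split an arbitrary $v\in F^l$ into components in the $I^{p,q}$ with $p\geq l$. Functoriality is then transparent from the formula: a morphism of mixed Hodge structures preserves $W$ and $F$ and, being defined over $\kk$, commutes with conjugation and so preserves $\bar F$; it therefore maps each $I^{p,q}$ into $I^{p,q}$.

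The main obstacle is the injectivity in the projection lemma and the accompanying $F$-compatibility, where the precise form of the correction terms is indispensable. Concretely, one must check that the near-symmetry $\overline{I^{p,q}}\equiv I^{q,p}\ (\mathrm{mod}\ W_{p+q-2})$ forced by the definition is exactly strong enough to split $F$ without disturbing the splitting of $W$; this balancing act is the whole reason for the asymmetry of Deligne's formula, and is where the real work lies.
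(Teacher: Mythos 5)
The paper does not prove this lemma: it is quoted directly from Deligne (\cite{DeHII}, 1.2.11, with \cite{GS}, Lemma 1.12 as a secondary reference), so there is no internal proof to compare against. Your formula for $I^{p,q}$ is, after the reindexing $j=i+1$, exactly Deligne's
$$I^{p,q}=\bigl(F^p\cap W_{p+q}\bigr)\cap\Bigl(\bar F^q\cap W_{p+q}+\sum_{j\geq 2}\bar F^{q-j+1}\cap W_{p+q-j}\Bigr),$$
and the architecture you describe --- a projection lemma identifying $I^{p,q}$ with the Hodge component $H^{p,q}$ of $Gr^W_{p+q}V_\CC$, ascending induction on the weight for the $W$-splitting, descending induction for the $F$-splitting, and functoriality read off from the formula --- is the standard proof.

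As written, however, the proposal is a program rather than a proof: essentially all of the content of the lemma is concentrated in the projection statement that $I^{p,q}\to Gr^W_{p+q}V_\CC$ is an isomorphism onto $H^{p,q}$, and you establish neither half of it. Injectivity you defer explicitly. But surjectivity is not ``clear'' either: an element of $H^{p,q}=F^pGr^W_m\cap\bar F^qGr^W_m$ (with $m=p+q$) has a lift in $F^p\cap W_m$ and a lift in $\bar F^q\cap W_m$, and these differ by an element of $W_{m-1}$; producing a \emph{common} lift that lands in the second factor of the definition of $I^{p,q}$ is precisely the inductive correction argument that the extra terms $\bar F^{q-j+1}\cap W_{p+q-j}$ are designed for, and it uses the mixed Hodge axiom on the lower-weight graded pieces. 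For two arbitrary filtrations no such common lift need exist. Relatedly, the ``strictness of $F$ with respect to $W$'' you invoke for the reverse inclusion $F^l\subseteq\bigoplus_{p\geq l}I^{p,q}$ is not an axiom of a mixed Hodge structure; the correct descending induction on $m$ needs only $F^lGr^W_mV_\CC=\bigoplus_{p\geq l}H^{p,m-p}$ together with the projection lemma, so that step is repairable. The functoriality argument is fine. In short: correct formula, correct strategy, but the crux --- both directions of the projection lemma --- is asserted rather than proved.
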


\begin{lem}\label{seccions_compatibles}
Let $A$ be a mixed Hodge diagram.
\begin{enumerate}[(1)]
 \item  There are sections $\sigma^n_\QQ:H^n(A_\QQ)\to Z^n(A_\QQ)$ and $\sigma^n_i:H^n(A_i)\to Z^n(A_i)$ of the projection,
 which are compatible with the weight filtration
$W$.
\item  There exists a section $\sigma^n_\CC:H^n(A_\CC)\to Z^n(A_\CC)$ of the projection, which is compatible with both filtrations $W$ and $F$.
\end{enumerate}
\end{lem}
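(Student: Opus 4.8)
The plan is to read both statements as assertions about splitting the projection $\pi\colon Z^n\to H^n$ from cocycles to cohomology, where $Z^n$ and $H^n$ carry the filtrations induced by those on $A$ (so $W_pZ^n=Z^n\cap W_pA^n$, $W_pH^n=\pi(W_pZ^n)$, and analogously for $F$). For a single filtration the existence of a section is essentially formal, whereas the presence of two filtrations in part (2) is the real issue and is where Deligne's splitting enters.

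For part (1), observe that with the induced filtration the projection $\pi$ is tautologically a \emph{strict} surjection of filtered $\kk$-vector spaces, since $\pi(W_pZ^n)=W_pH^n$ by definition. I would then invoke the elementary splitting lemma: any strict surjection of regularly and exhaustively filtered vector spaces over a field admits a filtered section. This is proved by induction on the weight, choosing at each stage a complement of $W_{p-1}H^n$ in $W_pH^n$, lifting it into $W_pZ^n$ (possible precisely by strictness), and passing to the union using exhaustiveness. Applying this to $A_\QQ$ and to each $A_i$ produces $\sigma^n_\QQ$ and $\sigma^n_i$.

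For part (2) a common $W$- and $F$-section need not exist for an arbitrary pair of filtrations, so the extra input is the mixed Hodge structure on cohomology. By the remark following the definition of mixed Hodge diagram, $H^n(A_\CC)$ is the complexification of the mixed Hodge structure $(H^n(A_\QQ),\Dec W,F)$. Deligne's splitting (Lemma \ref{Desplitting}) then furnishes a bigrading $H^n(A_\CC)=\bigoplus_{p,q}I^{p,q}$ with $W_m=\bigoplus_{p+q\le m}I^{p,q}$ and $F^l=\bigoplus_{p\ge l}I^{p,q}$. Consequently a section is automatically compatible with both filtrations as soon as it carries each $I^{p,q}$ into $W_{p+q}Z^n\cap F^pZ^n$, and I would define $\sigma^n_\CC$ as the direct sum over $(p,q)$ of lifts of bases of the $I^{p,q}$ into these intersections.

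The main obstacle is precisely this bistrictness: representing a class by a single cocycle lying in $W\cap F$, rather than producing two separate representatives. I would derive it from axioms $(\mathrm{MH}_1)$ and $(\mathrm{MH}_2)$ together with the degeneration statements of Scholie 8.1.9 of \cite{DeHIII} (the Hodge spectral sequence degenerates at $E_1$, the weight spectral sequence at $E_2$), which are the same facts underlying Lemma \ref{quis_es_we}. These guarantee the bistrictness identity $W_mH^n\cap F^pH^n=\pi\big(W_mZ^n\cap F^pZ^n\big)$, once one accounts for the décalage index shift relating $\Dec W$ and $W$ on cohomology. Granting this, the lifts of the $I^{p,q}$ assemble into the desired section $\sigma^n_\CC$ compatible with both $W$ and $F$.
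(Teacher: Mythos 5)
Your proposal is correct and follows essentially the same route as the paper: part (1) is an elementary filtered splitting of the projection $Z^n\to H^n$ (the paper phrases this via strictness of $d$ for $\Dec W$ and the shift $\Dec W_pH^n=W_{p-n}H^n$, but the content is the same), and part (2) uses Deligne's splitting together with the identity $\Dec W_pF^qH^n(A_\CC)=\Img\{H^n(\Dec W_pF^qA_\CC)\to H^n(A_\CC)\}$ deduced from the $E_1$-degenerations of Scholie 8.1.9, exactly the ``bistrictness'' you single out. The paper likewise lifts each $I^{p,q}$ into $\Dec W_{p+q}F^pA_\CC$ and sums the resulting sections, so no gap remains.
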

\begin{proof}
Since the differential of $A_\QQ$ is strictly compatible with the filtration
$\Dec W$, there is a section $\sigma_\QQ:H^n(A_\QQ)\to Z^n(A_\QQ)$ compatible with $\Dec W$.
Since $\Dec W_pH^n(A_\QQ)=W_{p-n}H^n(A_\QQ)$, the map $\sigma_\QQ$ is compatible with $W$. For $\sigma_i:H^n(A_i)\to Z^n(A_i)$
the proof is analogous. This proves (1).
\\

Let us prove (2). Since $(H^n(A_\QQ),\Dec W,F)$ is a mixed Hodge structure, by Lemma $\ref{Desplitting}$
there exists a 
direct sum decomposition $H^n(A_\CC)=\bigoplus I^{p,q}$ with $I^{p,q}\subset \Dec W_{p+q}F^pH^n(A_\CC)$ such that
$$W_mH^n(A_\CC)=\Dec W_{m+n}H^n(A_\CC)=\bigoplus_{p+q\leq m+n} I^{p,q}\text{ and }F^lH^n(A_\CC)=\bigoplus_{p\geq l} I^{p,q}.$$
Therefore it suffices to define sections $\sigma^{p,q}:I^{p,q}\to Z^n(A_\CC)$.
By Scholie 8.1.9 of \cite{DeHIII}
the four spectral sequences
$$
\xymatrix@R=.2pc{
E_1(Gr^{\Dec W}_\bullet A_\CC,F)\ar@{=}[dd]\ar@{=>}[r]&E_1(A_\CC,\Dec W)\ar@{=>}[rd]\\
&&H(K)\\
E_1(Gr_F^\bullet A_\CC,\Dec W)\ar@{=>}[r]&E_1(A_\CC,F)\ar@{=>}[ru]\\
}
$$
degenerate at $E_1$.
It follows that
the induced filtrations in cohomology are given by:
$$\Dec W_pF^qH^n(A_\CC)=\Img\{H^n(\Dec W_pF^qA_\CC)\lra H^n(A_\CC)\}.$$
Since $I^{p,q}\subset \Dec W_{p+q}F^qH^n(A_\CC)$ we have $\sigma^{p,q}(I^{p,q})\subset \Dec W_{p+q}F^pA_\CC$.
Define 
$$\sigma^n_\CC:=\bigoplus \sigma^{p,q}:H^n(A_\CC)\lra A_\CC.$$
For the weight filtration we have:
$$\sigma^n_\CC(W_mH^n(A_\CC))=\bigoplus_{p+q\leq m+n} \sigma^{p,q}(I^{p,q})\subset
\bigoplus_{p+q\leq m+n}\Dec W_{p+q}A_\CC^n
\subset \Dec W_{m+n}A_\CC^n\subset W_mA_\CC^n.$$
Therefore
$\sigma_\CC$ is compatible with $W$.
For the Hodge filtration we have:
$$\sigma^n_\CC(F^lH^n(A_\CC))=\bigoplus_{p\geq l} \sigma^{p,q}(I^{p,q})\subset \sum_{p\geq l} F^{p}A_\CC
\subset F^lA_\CC.$$
Therefore $\sigma_\CC$ is compatible with $F$.

\end{proof}

\begin{defi}
A mixed Hodge diagram $A$ is said to be \textit{0-connected} if the unit map $\eta:\QQ\to A_\QQ$
induces an isomorphism $\QQ\cong H^0(A_\QQ)$.
\end{defi}

\begin{defi}
A \textit{mixed Hodge dga} is a filtered dga $(A,d,W)$ over $\QQ$,
together with a filtration $F$ on $A_\CC:=A\otimes_\QQ \CC$, such that for each $n\geq 0$, the triple
$(A^n,\Dec W,F)$
is a mixed Hodge structure and the differentials $d:A^n\to A^{n+1}$ and products
$A^n\otimes A^m\to A^{n+m}$
are morphisms of mixed Hodge structures.
\end{defi}
The cohomology of every mixed Hodge diagram is a mixed Hodge dga with trivial differential. Conversely,
since the category of mixed Hodge structures is abelian (\cite{DeHII}, Theorem 2.3.5), every
mixed Hodge dga is a mixed Hodge diagram in which the comparison morphisms are identities.
We will show that every 0-connected mixed Hodge diagram is quasi-isomorphic to a mixed Hodge dga satisfying the following minimality condition.

\begin{defi}
Let $A$ be a mixed Hodge dga. A \textit{mixed Hodge extension} of $A$ of degree $n$ is
a mixed Hodge dga $A\otimes_\xi\Lambda(V)$
where $(V,W)$ is a filtered graded module concentrated in pure degree $n$ and
$\xi:V\to A$ is a linear map of degree 1 such that $d\circ\xi=0$ and $\xi(W_pV)\subset W_{p-1}A$.
In addition, the vector space $V\otimes\CC$ has a filtration $F$ compatible with $\xi$
making the triple $(V,\Dec W,F)$ into a mixed Hodge structure.
The differentials and filtrations on $A\otimes_\xi\Lambda(V)$ are defined by multiplicative extension. 
Such an extension is said to be \textit{minimal} if $A$ is augmented and $\xi(V)\subset A^+\cdot A^+$.
\end{defi}

\begin{defi}A mixed Hodge dga is said to be \textit{minimal} if it is the colimit of a sequence of
minimal mixed Hodge extensions starting from the base field $\QQ$
 endowed with the trivial mixed Hodge structure.
\end{defi}
Note that in particular, every minimal mixed Hodge dga is a Sullivan minimal $E_1$-cofibrant dga.
To construct minimal models for 0-connected mixed Hodge diagrams we adapt the classical step by step
construction of Sullivan minimal models for 0-connected dga's,
using mixed Hodge extensions (see \cite{HT} and \cite{Mo} for similar constructions based on
bigraded extensions).

\begin{teo}\label{minim_ho}
For every 0-connected mixed Hodge diagram $A$ there exists a minimal mixed Hodge dga $M$ together with a ho-morphism $\rho:M\hto A$
which is a quasi-isomorphism.
\end{teo}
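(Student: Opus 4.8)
The plan is to adapt the classical Sullivan step-by-step construction, building $M$ as a colimit of minimal mixed Hodge extensions and simultaneously defining the ho-morphism $\rho$ degree by degree. Since $A$ is $0$-connected, I start with $M_0=\QQ$ and the unit maps $\eta_i:\QQ\to A_i$, which are automatically compatible with all structure. At each stage I will have built a minimal mixed Hodge dga $M_{n}$ together with a ho-morphism $\rho_n:M_n\hto A$ that is an isomorphism on cohomology in degrees $<n$ and injective in degree $n$; the inductive step attaches generators in degree $n$ to kill the cokernel and kernel of $H^n(\rho_n)$, respectively, and then extends $\rho$ over the new generators.

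\emph{Construction of generators as mixed Hodge structures.} The key point that makes this more delicate than the unfiltered case is that the new generators $V$ must carry a mixed Hodge structure compatible with $\xi$, so that $M_{n+1}$ remains a mixed Hodge dga. For the surjectivity step, I take $V_1:=\Coker H^{n+1}(\rho_n)$, which inherits a mixed Hodge structure since the category of mixed Hodge structures is abelian, with $\xi=0$ on this summand. For the injectivity step, one takes $V_2$ to be (a mixed Hodge structure mapping onto) the kernel of $H^{n+1}(\rho_n)$, with $\xi$ realizing the corresponding coboundary relation; again abelianness guarantees the kernel is a sub-mixed-Hodge-structure and $\xi$ can be arranged to be a morphism of mixed Hodge structures landing in $M_n^+\cdot M_n^+$ up to the usual adjustment, so the extension is minimal.

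\emph{Extending the ho-morphism.} Here is where Lemma $\ref{seccions_compatibles}$ does the essential work. Having chosen $\xi$, I must define $\rho$ on each new generator $v\in V$ at every level $\QQ$, $\CC$, and $i\in I$, compatibly with the weight filtration (and with $F$ over $\CC$), and only up to $1$-homotopy across the comparison maps $\varphi_u$. The compatible sections $\sigma^n_\QQ,\sigma^n_i$ and the bifiltered section $\sigma^n_\CC$ produced in Lemma $\ref{seccions_compatibles}$ let me lift the prescribed cohomology classes to filtered cocycles respecting $W$ (and $F$), so that $\rho$ on the generators preserves weights. The comparison maps $\varphi_u$ need not commute with these choices on the nose, but they do so up to a filtered coboundary; invoking the homotopy lifting of Theorem $\ref{rcofs}$ (for $r=1$, since $M$ is $E_1$-cofibrant and the $\varphi_u$ are $E_1$-quasi-isomorphisms) supplies the $1$-homotopies $F_u$ making $\rho$ into a genuine ho-morphism.

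\emph{Main obstacle.} I expect the principal difficulty to be the simultaneous bookkeeping: one must choose the generators $V$, the attaching map $\xi$, and the lifts $\rho(v)$ at all levels so that (i) $V$ is a mixed Hodge structure, (ii) $\xi$ is a morphism of mixed Hodge structures valued in the decomposables, and (iii) the resulting $\rho$ is weight- and Hodge-compatible at each level \emph{and} homotopy-commutes with the $\varphi_u$. The compatibility of the sections from Lemma $\ref{seccions_compatibles}$ with $W$ and $F$, together with the abelianness of $\MHS$, is precisely what allows all three constraints to be met at once; verifying that the homotopies $F_u$ respect the weight filtration (so that they are bona fide $1$-homotopies in the sense of Definition $\ref{rhomotopy}$) is the most technical checkpoint, handled by the strictness properties recorded in Scholie 8.1.9 of \cite{DeHIII} and the regularity of $W$.
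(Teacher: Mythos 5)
Your overall strategy coincides with the paper's: an inductive Sullivan-type construction by minimal mixed Hodge extensions, Lemma \ref{seccions_compatibles} to choose filtration-compatible cocycle representatives, and the lifting Theorem \ref{rcofs} to produce the $1$-homotopies across the comparison maps $\varphi_u$. The one genuine divergence is the choice of new generators. You split the obstruction into $\Coker H^{n}(\rho)$ and $\Ker H^{n+1}(\rho)$ (note the index slip in your ``$V_1:=\Coker H^{n+1}(\rho_n)$'') and treat the two summands separately via abelianness of $\MHS$; the paper instead takes $V=H^{n}(C_1(\wt\rho))$, the cohomology of the mixed cone of the ho-morphism, which Lemma \ref{mixedcone} shows to be a mixed Hodge complex. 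The cone buys two things at once: the built-in shift $W_pC_1(\wt\rho)=W_{p-1}\wt M[1]\oplus W_pA$ puts exactly the weight filtration on $V$ that makes $\wt M\otimes_\xi\Lambda V$ a mixed Hodge ($E_1$-cofibrant) extension, and a cocycle of $C_1(\wt\rho)$ is a pair $(m,a)$ with $dm=0$ and $\wt\rho(m)=-da$, so a single application of Lemma \ref{seccions_compatibles} to the cone produces $\xi(v)=m$ and $\rho(v)=a$ simultaneously, compatibly with $W$ (and with $F$ at the $\CC$-level). In your split version the kernel step needs an extra argument that the exact element $\rho(\xi(v))$ admits a primitive lying in the correct step of $W$ (and $F$): this does follow from the strictness/degeneration of Scholie 8.1.9, which you invoke, but it is not given by Lemma \ref{seccions_compatibles} alone, since that lemma lifts cohomology classes rather than producing filtered primitives --- the mixed cone is precisely the paper's device for absorbing this point, and the paper even phrases its inductive invariant through the cone (the map $H^n(C(\rho(n,q-1)))\to H^n(C(\rho(n,q)))$ is required to vanish). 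Finally, be aware that the paper builds the levels $M_\QQ$, $M_i$, $M_\CC$ as separate filtered dga's and only afterwards uses the lifted isomorphisms $\varphi_u:M_i\to M_j$ between minimal objects to transport $F$ onto $M_\QQ\otimes\CC$, so that $M$ ends up a single mixed Hodge dga; your write-up assumes the levels are identified from the outset, which is where that step is hiding.
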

\begin{proof}
We will define inductively over $n\geq 1$ and $q\geq 0$ a sequence of mixed Hodge dga's $M(n,q)$ together with ho-morphisms
$\rho(n,q):M(n,q)\hto A$ satisfying the following conditions:
\begin{enumerate}
\item[$(a_{1,0})$] $M(1,0)=\QQ$ has a mixed Hodge structure defined by the trivial filtrations.
\item[$(a_{n,q})$] If $q>0$ then $M(n,q)=M(n,q-1)\otimes_\xi\Lambda(V)$ is a minimal extension of degree $n$.
The morphism $\rho(n,q)^*_\QQ:H^i(M(n,q))\to H^i(A_\QQ)$ is an isomorphism for
all $i\leq n$, and the morphism $i^*:H^n(C(\rho(n,q-1)))\to H^n(C(\rho(n,q)))$ is trivial.
\item[$(a_{n,0})$] If $n>1$ then $M(n,0)$ is the colimit of a sequence $\cdots \subset M(n-1,q)\subset M(n-1,q+1)\subset \cdots$
and the map $\rho(n,0):M(n,0)\hto A$ is the induced ho-morphism.
\end{enumerate}
Then the mixed Hodge dga $M=\cup_n M(n,0)$, together with the induced ho-morphism $\rho:M\hto A$ will be the required quasi-isomorphism.
\\

Assume that we have constructed a minimal mixed Hodge dga $\wt M=M(n,q-1)$ and a ho-morphism $\wt\rho=\rho(n,q-1)\hto A$
satisfying $(a_{n,q-1})$.
Consider the filtered vector spaces of degree $n$ given by
$$V_\QQ=H^{n}(C_1(\wt\rho_\QQ)),\,V_i=H^{n}(C_1(\wt\rho_i)),\text{ for }i\in I\text{ and }V_\CC=H^{n}(C_{1,0}(\wt\rho_\CC)).$$
By Lemma $\ref{mixedcone}$ the mixed cone $C(\wt\rho)$ is a mixed Hodge complex.
Hence we have filtered isomorphisms
$(V_\QQ,W)\otimes \CC\cong (V_i,W)\cong (V_\CC,W)$
making the triple
$(V_\QQ,\Dec W,F)$
into a mixed Hodge structure. By
Lemma $\ref{seccions_compatibles}$ there are sections
$\sigma_\QQ:V_\QQ\to Z^n(C_1(\wt\rho_\QQ))$ and $\sigma_i:V_i\to Z^n(C_1(\wt\rho_i))$
compatible with $W$, together with a section
$\sigma_\CC:V_\CC\to Z^n(C_1(\wt\rho_\CC)),$
compatible with both filtrations $W$ and $F$.
Define filtered dga's
$$M_\QQ=\wt M_\QQ\otimes\Lambda(V_\QQ),\,M_i=\wt M_i\otimes\Lambda(V_i),\text{ for }i\in I\text{ and }M_\CC=\wt M_\CC\otimes\Lambda(V_\CC).$$
The corresponding filtrations are defined by multiplicative extension. The sections $\sigma_\QQ$, $\sigma_i$ and $\sigma_\CC$ allow
to define differentials such that $d(W_pV_\QQ)\subset W_{p-1}\wt M_\QQ^{n+1}$ and $d(F^pV_\CC)\subset F^q\wt M_\CC^{n+1}$, and maps 
$\rho_\QQ:M_\QQ\to A_\QQ$, $\rho_i:M_i\to A_i$ and $\rho_\CC: M_\CC\to A_\CC$ compatible with the corresponding filtrations.
Since by hypothesis $\wt M_\QQ$ is generated in degrees $\leq n$, it follows that $dV_\QQ\subset \wt M^+_\QQ\cdot \wt M^+_\QQ$.
\\

Since $M_i$ is $E_1$-cofibrant, by Theorem $\ref{rcofs}$
for every solid diagram
$$
\xymatrix{
M_i\ar[d]_{\rho_i}\ar@{.>}[r]^{\varphi_u}&{M_j}\ar[d]^{\rho_j}\\
A_i\ar[r]^{\varphi_u}&A_j
}
$$
there exists a
morphism $\varphi_u:M_i\to M_j$ together with a $1$-homotopy $ R_u$ from $\rho_j\varphi_u$ to $\varphi_u\rho_i$.
Since $M_i$ are minimal, $\varphi_u$ is an isomorphism. Hence we can transport the filtrations $F$ of $M_\CC$
to $M_\QQ\otimes\CC$. Let $M(n,q)=M_\QQ$.
The morphisms $\rho_\QQ$ and $\rho_\CC\varphi_u$,
together with the homotopies $R_u$ define a
ho-morphism $\rho:M(n,q)\hto A$ satisfying $(a_{n,q})$. This ends the inductive step.
\end{proof}

To construct
minimal models of morphisms of mixed Hodge diagrams
we follow the same steps as in the classical construction for morphisms of dga's (see e.g. $\S$14 of \cite{FHT}).

\begin{defi}

A \textit{relative minimal mixed Hodge dga} is given by an inclusion $M\hookrightarrow \wt M$ of mixed Hodge dga's
where $\wt M=M\otimes\Lambda(V)$ is a colimit mixed Hodge extensions of $M$
satisfying
$$d(W_pV)\subset W_{p-1} (M^+\otimes\Lambda(V))\oplus W_{p-1}(M\otimes\Lambda^{\geq 2}V).$$
\end{defi}

\begin{teo}\label{modelmorfisme}
For every morphism $f:A\to B$ of 0-connected mixed Hodge diagrams there exists a relative minimal mixed Hodge dga
$\wt f:M\to \wt M$, with $M$ minimal, and a commutative diagram
$$\xymatrix{
A\ar[r]^f&B\\
M\ar@{~>}[u]^{\rho}\ar[r]^{\wt f}&\wt M\ar@{~>}[u]_{\rho'}
}$$
where the vertical ho-morphisms are quasi-isomorphisms.
\end{teo}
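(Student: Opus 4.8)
The plan is to first replace $A$ by its minimal model and then build the relative minimal model of $B$ over it by a step-by-step Sullivan-type construction carried out internally in the category of mixed Hodge diagrams, exactly parallel to the proof of Theorem \ref{minim_ho}. Concretely, I would begin by applying Theorem \ref{minim_ho} to obtain a minimal mixed Hodge dga $M$ together with a quasi-isomorphism $\rho:M\hto A$. Composing with the strict morphism $f$ yields a ho-morphism $f\rho:M\hto B$, whose level-wise $1$-homotopies are obtained by pushing forward those of $\rho$ along $f$ (using the strict commutativity of $f$). This serves as the base of an inductive construction that enlarges $M$ to a relative minimal mixed Hodge dga $\wt M=M\otimes\Lambda(V)$, with $\wt f:M\hookrightarrow\wt M$ the inclusion, and extends $f\rho$ to a ho-morphism $\rho':\wt M\hto B$ which is a quasi-isomorphism. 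Since $\rho'$ restricts to $f\rho$ on $M$ and $\wt f$ is the inclusion, the required square commutes by construction.

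For the inductive step I would mirror the degree-by-degree procedure of Theorem \ref{minim_ho}, but applied to the ho-morphism $\rho'$ rather than to $\rho$. At each stage the mixed cone $C(\rho')$ is, by Lemma \ref{mixedcone}, a mixed Hodge complex, so $H^n(C(\rho'))$ carries a functorial mixed Hodge structure whose rational, complex and intermediate components are linked by filtered isomorphisms. Invoking Deligne's splitting (Lemma \ref{Desplitting}) through Lemma \ref{seccions_compatibles}, I would choose sections $\sigma_\QQ,\sigma_i,\sigma_\CC$ of the relevant cocycle projections compatible with $W$ over $\QQ$ and with both $W$ and $F$ over $\CC$. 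These sections produce a filtered graded module $V$ in degree $n$ endowed with a mixed Hodge structure $(V,\Dec W,F)$, which I adjoin as new generators, the differential of a generator being the $\wt M^{n+1}$-component of the corresponding cone cocycle. Killing $H^n(C(\rho'))$ in this way simultaneously makes $H^n(\rho')$ surjective and $H^{n+1}(\rho')$ injective; the weight-compatibility of the sections forces $d(W_pV)\subset W_{p-1}\wt M^{n+1}$, and since $\rho$ is already a quasi-isomorphism onto $A$ the injectivity-type generators receive differentials in $M^+\otimes 1$ while the surjectivity-type generators receive decomposable ones, so that $d(W_pV)\subset W_{p-1}(M^+\otimes\Lambda V)\oplus W_{p-1}(M\otimes\Lambda^{\geq 2}V)$, which is exactly the relative minimality condition.

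As in Theorem \ref{minim_ho}, the Hodge filtration $F$ on the rational part is not given directly but transported through the comparison maps. For each arrow $u:i\to j$ of $I$, the $E_1$-cofibrancy of $M_i=\wt M_i$ together with the lifting property of Theorem \ref{rcofs} yields morphisms $\varphi_u:M_i\to M_j$ and $1$-homotopies $R_u$ filling the comparison squares; because the adjoined generators satisfy the relative minimality condition and $\varphi_u$ restricts to the identity on the already-constructed part, these $\varphi_u$ are isomorphisms, which lets me transport $F$ from the complex component to $M_\QQ\otimes\CC$. Assembling $\rho'_\QQ$, $\rho'_\CC\varphi_u$ and the homotopies $R_u$ then produces the ho-morphism $\rho'$ at each stage, and passing to the colimit over degrees gives the desired relative minimal model.

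The main obstacle I anticipate is not the homotopy-theoretic skeleton, which is a faithful transcription of the relative Sullivan construction of $\S$14 of \cite{FHT}, but the simultaneous bookkeeping of three compatibilities at every inductive step: the relative minimality condition on $\wt M$, the preservation of the mixed Hodge structure on each newly adjoined $V$, and the persistence of the comparison morphisms $\varphi_u$ as filtered isomorphisms. The delicate point is to verify that the sections furnished by Lemma \ref{seccions_compatibles} can be chosen so that the resulting differentials land in the decomposable-plus-relative part demanded by relative minimality while still respecting $W$ and $F$. This is precisely where the strictness properties of mixed Hodge complexes, namely the strict compatibility of the differentials with the filtrations, become indispensable, just as in the proof of Theorem \ref{minim_ho}.
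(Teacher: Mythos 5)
Your proposal is correct and follows essentially the same route as the paper's proof: take the minimal model $\rho:M\hto A$ from Theorem \ref{minim_ho}, start the induction from $f\rho:M\hto B$, and perform the inductive step exactly as in Theorem \ref{minim_ho} using mixed Hodge extensions defined via the mixed cone of the current ho-morphism, finally passing to the colimit. Your write-up in fact supplies more detail than the paper (which compresses the inductive step into a single sentence), in particular on the verification of the relative minimality condition and the transport of $F$ via Theorem \ref{rcofs}, but the underlying argument is the same.
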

\begin{proof}
We work inductively over $n\geq 1$ and $q\geq 0$ as follows.
By Theorem $\ref{minim_ho}$ there is a minimal mixed Hodge dga $M$ and a quasi-isomorphism $\rho:M\hto A$.
As a base case for our induction we take
$\wt M(1,0)=M$ and $\rho(1,0)=f\rho$.
Assume inductively that we have constructed a relative minimal 
mixed Hodge dga $f(n,q-1):M\to \wt M(n,q-1)$ 
together with a quasi-isomorphism $\wt\rho(n,q-1):\wt M(n,q-1)\hto B$.
The inductive step follows as in Theorem $\ref{minim_ho}$, by taking a mixed Hodge extension defined
via the mixed cone of the ho-morphism $\wt\rho(n,q-1)$.
The ho-morphism $$\rho':=\bigcup \wt\rho(n,0):\wt M:=\bigcup_n \wt M(n,0)\hto A$$ together with the inclusion
$\wt f:M\to \wt M$ give the required commutative diagram.\\
\end{proof}

\subsection*{Formality of mixed Hodge diagrams}
Deligne's splitting of mixed Hodge structures defines a 1-splitting of mixed Hodge algebras over $\CC$.
We show that this descends to a 1-splitting over $\QQ$ for minimal mixed Hodge algebras of finite type.

\begin{lem}[cf. \cite{Mo}, Thm. 9.6]\label{formalsC}
Let $f:A\to B$ be a morphism of mixed Hodge dga's of finite type and let $f_\CC:=f\otimes_\QQ\CC$.
Then $\Dec f_\CC$ admits a 0-splitting over $\CC$. Furthermore, if $A$ and $B$ are $E_1$-cofibrant then $f_\CC$ admits a 1-splitting over $\CC$.
\end{lem}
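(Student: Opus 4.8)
The plan is to read off both splittings from Deligne's splitting (Lemma \ref{Desplitting}) of the mixed Hodge structures that $A_\CC$ and $B_\CC$ carry in each degree, and then to transport the $\Dec$-level statement back to the original filtration via décalage. The point is that $(A^n,\Dec W,F)$ and $(B^n,\Dec W,F)$ are mixed Hodge structures, so it is the filtration $\Dec W$ that is split on the nose; the passage to $W$ will only cost a degree-dependent re-indexing, available once $A$ and $B$ are $E_1$-cofibrant.

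\textbf{The $0$-splitting of $\Dec f_\CC$.} First I would apply Lemma \ref{Desplitting} in each degree $n$, obtaining $A^n_\CC=\bigoplus_{p,q}I^{p,q}_A(n)$ with $\Dec W_mA^n_\CC=\bigoplus_{p+q\le m}I^{p,q}_A(n)$, and likewise for $B$. Grouping by total weight $w=p+q$, I set
\[
(\Dec A_\CC)^{-w,\,n+w}:=\bigoplus_{p+q=w}I^{p,q}_A(n),
\]
a bigrading of total degree $n$ refining $\Dec W$. The essential verification is that this bigrading is compatible with the algebra structure. Because $A$ is a mixed Hodge dga, its differential $d\colon A^n\to A^{n+1}$ and its products $A^n\otimes A^m\to A^{n+m}$ are morphisms of mixed Hodge structures; functoriality of Deligne's splitting then forces $d(I^{p,q}_A(n))\subset I^{p,q}_A(n+1)$, and its compatibility with tensor products forces $I^{p_1,q_1}_A(n)\cdot I^{p_2,q_2}_A(m)\subset I^{p_1+p_2,\,q_1+q_2}_A(n+m)$. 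Read in the indices above, $d$ raises only the second index by one and the product is additive in both indices, which are exactly the conditions of Definition \ref{def_split} for $r=0$; together with $\Dec W_mA^n_\CC=\bigoplus_{-a\le m}(\Dec A_\CC)^{a,n-a}$ this exhibits a $0$-splitting of $\Dec A_\CC$, and the same for $\Dec B_\CC$. Finally, since $f$ is in each degree a morphism of mixed Hodge structures, functoriality gives $f(I^{p,q}_A)\subset I^{p,q}_B$, so $f$ preserves both bigradings and $\Dec f_\CC$ admits a $0$-splitting.

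\textbf{The $1$-splitting of $f_\CC$.} Now assume $A$ and $B$ are $E_1$-cofibrant. By Lemma \ref{dec_cof}.(1) we have $\Dec W_pA^n=W_{p-n}A^n$ (and the same for $B$), so the degree-$n$ part of $\Dec W$-weight $w$ is pure of $W$-weight $w-n$. I would therefore re-index the $0$-splitting above, keeping the same summands but relabelling the piece of degree $n$ and $\Dec W$-weight $w$ as $A^{\,n-w,\,w}_\CC$. A direct check shows that the differential now raises the first index by one, $d(A^{a,b}_\CC)\subset A^{a+1,b}_\CC$, that products remain additive, and that $W_mA^n_\CC=\bigoplus_{a\ge -m}A^{a,n-a}_\CC$; these are precisely the conditions of Definition \ref{def_split} for $r=1$. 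As the relabelling depends only on the degree, which $f$ preserves, $f$ stays compatible with the re-indexed bigradings, and $f_\CC$ admits a $1$-splitting.

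\textbf{Main obstacle.} I expect the conceptual heart to be the compatibility of Deligne's splitting with the dga structure, i.e.\ that the $I^{p,q}$ are respected by $d$ and graded additively by the product; this is where the axioms of a mixed Hodge dga enter, through the functoriality and tensor-compatibility of the splitting. The remaining difficulty is purely bookkeeping: matching the décalée $0$-splitting with a genuine $1$-splitting of $A_\CC$ and $B_\CC$, which is exactly where the $E_1$-cofibrancy hypothesis is used, via the identity $\Dec W_pA^n=W_{p-n}A^n$.
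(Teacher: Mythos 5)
Your argument is correct and follows essentially the same route as the paper: apply Deligne's splitting degree-wise, use that the differential and product are morphisms of mixed Hodge structures (together with functoriality and tensor-compatibility of the splitting) to get a $0$-splitting of $\Dec f_\CC$ by grouping the $I^{p,q}$ by total weight, and then use $\Dec W_pA^n=W_{p-n}A^n$ from Lemma \ref{dec_cof} to re-index this into a $1$-splitting of $f_\CC$ in the $E_1$-cofibrant case. The only difference from the paper's proof is notational bookkeeping in the re-indexing.
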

\begin{proof}
Since for all $n\geq 0$ the triple
$(A^n,\Dec W,F)$
is a mixed Hodge structure, by Lemma $\ref{Desplitting}$ we have functorial decompositions
$$A^n_\CC=\bigoplus I^{p,q}_n,\text{ with }\Dec W_mA^n_\CC=\bigoplus_{p+q\leq m} I^{p,q}_n.$$
Since the differentials and products of $A$ are morphisms of mixed Hodge structures, we have
$d(I^{p,q}_n)\subset I^{p,q}_{n+1}$ and $I^{p,q}_n\cdot I^{p',q'}_{n'}\subset I^{p+p',q+q'}_{n+n'}$.
Then 
$A^{p,n-p}:=\bigoplus_r I^{-p-r,r}_{n}$
define a $0$-splitting of the filtered dga $(A_\CC, \Dec W)$.
Apply the same argument to define a 0-splitting for $B_\CC=\bigoplus B^{p,n-p}$.
Since $\Dec f:\Dec A\to \Dec B$ is a morphism of graded mixed Hodge structures 
and Deligne's splittings are functorial, the morphism $\Dec f_\CC$ is
compatible with these 0-splittings.\\

For $E_1$-cofibrant dga's the décalage functor has an inverse defined by shifting the weight filtration. Indeed, if
$A$ is $E_1$-cofibrant, 
by Lemma $\ref{dec_cof}$ we have $W_pA^n=\Dec W_{p-n}A^n$.
Then $\wt A^{p,n-p}:=A^{p-n,2n-p}$
define a 1-splitting of $A$ with respect to the filtration $W$. The same argument applies to $B$. The map
$f_\CC$ is compatible with these 1-splittings.
\end{proof}

\begin{lem}\label{formalsQ}
Let $f:M\hookrightarrow \wt M$ be a relative minimal mixed Hodge dga, with $M$ minimal. If $M$ and $\wt M$ have finite type
then $f$ admits a 1-splitting over $\QQ$.
\end{lem}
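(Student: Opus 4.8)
The plan is to deduce the rational $1$-splitting of the relative minimal mixed Hodge dga $f:M\hookrightarrow\wt M$ from its complex counterpart using the descent machinery already established. The key point is that Lemma $\ref{formalsC}$ provides a $1$-splitting of $f_\CC=f\otimes_\QQ\CC$ over $\CC$, while Theorem $\ref{descens_rsplittings}$ tells us that the existence of a $1$-splitting of a morphism of finitely generated $E_1$-cofibrant dga's is insensitive to field extensions. So the strategy is: first verify that $f$ lands in the hypotheses of the descent theorem, then invoke it with $r=1$, $\kk=\QQ$ and $\mathbf K=\CC$.

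First I would check the cofibrancy and finiteness hypotheses. By the remark following the definition of minimal mixed Hodge dga, every minimal mixed Hodge dga is a Sullivan minimal $E_1$-cofibrant dga; the same extension argument shows that a relative minimal mixed Hodge dga $M\hookrightarrow\wt M$ makes $\wt M$ an $E_1$-cofibrant dga (one builds $\wt M$ from $M$ by a colimit of mixed Hodge extensions, each of which is in particular an $E_1$-cofibrant extension, the weight condition $d(W_pV)\subset W_{p-1}(\cdots)$ being exactly the shift needed). Since $M$ and $\wt M$ are assumed to have finite type, both are finitely generated $E_1$-cofibrant dga's over $\QQ$, so $f:M\to\wt M$ is a morphism of finitely generated $E_1$-cofibrant dga's. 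This is precisely the setting of Theorem $\ref{descens_rsplittings}$.

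Next I would produce the complex splitting. Applying Lemma $\ref{formalsC}$ to the morphism $f:M\to\wt M$ of mixed Hodge dga's of finite type, the second assertion of that lemma gives that $f_\CC$ admits a $1$-splitting over $\CC$, since $M$ and $\wt M$ are $E_1$-cofibrant. Then the descent theorem, applied with $r=1$ to the field extension $\QQ\subset\CC$, yields that $f$ admits a $1$-splitting over $\QQ$ if and only if $f_\CC=f\otimes_\QQ\CC$ does. Since we have just established the latter, we conclude that $f$ admits a $1$-splitting over $\QQ$, which is the claim.

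The only real obstacle is bookkeeping rather than mathematics: one must make sure that the $E_1$-cofibrancy of $\wt M$ as an \emph{object} (not merely as a relative object over $M$) is correctly extracted, so that the pair $(M,\wt M)$ genuinely satisfies the hypotheses of Theorem $\ref{descens_rsplittings}$, and that Lemma $\ref{formalsC}$ is being invoked for the morphism $f$ itself and not just for its source and target separately. Both are immediate from the definitions, so the proof is short; essentially all the content has been front-loaded into Lemma $\ref{formalsC}$ (complex splitting from Deligne's decomposition) and Theorem $\ref{descens_rsplittings}$ (Galois-cohomological descent via the unipotence of the kernel $\mathbf N$).
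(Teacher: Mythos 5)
Your overall strategy (complex splitting from Lemma \ref{formalsC}, then descent to $\QQ$ via Theorem \ref{descens_rsplittings}) is the same as the paper's, but there is a genuine gap in the step where you assert that ``since $M$ and $\wt M$ are assumed to have finite type, both are finitely generated $E_1$-cofibrant dga's over $\QQ$.'' Finite type here means that each graded piece (equivalently, the space of generators in each degree) is finite-dimensional; it does not mean that the algebra is finitely generated, since a minimal Sullivan algebra of finite type typically has generators in infinitely many degrees. Theorem \ref{descens_rsplittings} is stated, and proved, only for \emph{finitely generated} $E_r$-cofibrant dga's: its proof rests on Proposition \ref{algebraicos_map}, which realizes $\Aut_W(f)$ as an algebraic matrix group inside some $\mathbf{GL}_N(\kk)$, and this requires finitely many generators. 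So you cannot invoke the descent theorem directly for $f:M\to\wt M$.

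The paper closes this gap by truncating: it sets $t_nM$ to be the subalgebra generated by $M^{\leq n}$ and $t_n\wt M$ the subalgebra generated by $M^{\leq n+1}+\wt M^{\leq n}$, checks using minimality that these are differential subalgebras, and observes that each restriction $t_nf:t_nM\to t_n\wt M$ \emph{is} a morphism of finitely generated $E_1$-cofibrant dga's. Lemma \ref{formalsC} and Theorem \ref{descens_rsplittings} then give a $1$-splitting of each $t_nf$ over $\QQ$. A further argument is still needed to assemble these into a $1$-splitting of $f$ itself; the paper does this by exhibiting $\Aut_W(f)$ as the inverse limit of the groups $\Aut_W(t_nf)$ and appealing to a limit argument (Theorem 6.2.1 of \cite{Operads}). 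Both the truncation and the passage to the limit are missing from your proposal, and the second is not merely bookkeeping: one must show that compatible splitting automorphisms can be chosen coherently across all $n$. You should either add these steps or restrict the statement to finitely generated models.
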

\begin{proof}
Let $t_nM$ denote the subalgebra of $M$ generated by 
$M^{\leq n}$. Likewise, denote by $t_n\wt M$ 
the subalgebra of $\wt M$ generated by
$M^{\leq n+1}+\wt M^{\leq n}$.
The minimality conditions on $M$ and $f$ ensure that both $t_nM$ and $t_n\wt M$ are stable by the differentials.
Hence $t_nM$ and $t_n\wt M$ are filtered sub-dga's of $M$ and $\wt M$ respectively.
Denote by $t_nf:t_nM\to t_n\wt M$ the restriction of $f$.
Then $f$ can be written as the inductive limit of $t_{n}f$ over $n\geq 0$.
Since $f$ is a morphism of $E_1$-cofibrant dga's of finite type, it follows that:
\begin{enumerate}[(i)]
 \item $t_{n} f:t_nM\to t_n\wt M$ is a morphism of $E_1$-cofibrant finitely generated dga's.
 \item $t_{n} f$ is stable by the automorphisms of $ f$: there is a map $\Aut(f,W)\to \Aut(t_{n}f,W)$.
 \item \label{limitautos} There is an inverse system of groups $(\Aut(t_{n}f,W))_{n}$
 and an isomorphism of groups 
 $$\Aut( f,W) \lra \invlim \Aut(t_{n} f,W).$$ 
\end{enumerate}
Since $t_{n}(f\otimes\CC) \cong t_{n} f \otimes\CC$, the morphisms $t_{n}f\otimes\CC$
inherit 1-splittings by Lemma $\ref{formalsC}$.
Hence the morphisms $t_{n}f$ admit 1-splittings by (i) and Theorem $\ref{descens_rsplittings}$.
It suffices to show that the 1-splittings of $t_{n}f$ allow to define a 1-splitting of
$f$. This follows as in the proof of Theorem 6.2.1 of \cite{Operads}, using properties (ii) and (iii).
\end{proof}

\begin{defi}
We say that a mixed Hodge diagram $A$ has \textit{homotopy finite type} if there exists a quasi-isomorphism $M\hto A$
where $M$ is a minimal mixed Hodge dga of finite type.
\end{defi}

\begin{teo}\label{formals_MHD}
The rational component of every morphism $f:A\to B$ of 0-connected mixed Hodge diagrams with homotopy finite type is $E_1$-formal.
\end{teo}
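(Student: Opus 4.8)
The plan is to replace $f$ by the relative minimal model furnished by Theorem \ref{modelmorfisme}, extract a $1$-splitting of its rational component via Lemma \ref{formalsQ}, and then transport $E_1$-formality back to $f_\QQ$ along the comparison quasi-isomorphisms; this is the morphism-level analogue of the remark following the definition of $E_r$-formality.

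First I would apply Theorem \ref{modelmorfisme} to $f$, obtaining a relative minimal mixed Hodge dga $\wt f:M\to\wt M$ with $M$ minimal and a commutative square of ho-morphisms
$$
\xymatrix{
A\ar[r]^f&B\\
M\ar@{~>}[u]^{\rho}\ar[r]^{\wt f}&\wt M\ar@{~>}[u]_{\rho'}
}
$$
whose vertical maps are quasi-isomorphisms. Passing to rational components gives a strictly commutative square of filtered dga's over $\QQ$ in which, by Lemma \ref{quis_es_we}, the maps $\rho_\QQ$ and $\rho'_\QQ$ are $E_1$-quasi-isomorphisms, hence isomorphisms in $\Ho_1(\Fdga{}{\QQ})$. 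Because $A$ and $B$ have homotopy finite type, $M$ and $\wt M$ may be taken of finite type, so Lemma \ref{formalsQ} yields a $1$-splitting of $\wt f_\QQ:M_\QQ\to\wt M_\QQ$. Proposition \ref{splitting_ss} then turns this splitting into filtered-dga isomorphisms $\pi_M:M_\QQ\xrightarrow{\cong}E_1(M_\QQ)$ and $\pi_{\wt M}:\wt M_\QQ\xrightarrow{\cong}E_1(\wt M_\QQ)$ that strictly intertwine $\wt f_\QQ$ with $E_1(\wt f_\QQ)$; in particular $\wt f_\QQ$ is $E_1$-formal.

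Next I would transport this along $\rho_\QQ$ and $\rho'_\QQ$. Applying the functor $E_1$ to the strict model square produces a strict square relating $E_1(\wt f_\QQ)$ to $E_1(f_\QQ)$ via $E_1(\rho_\QQ)$ and $E_1(\rho'_\QQ)$. The point to verify is that these two maps are again isomorphisms in $\Ho_1$: since $\rho_\QQ$ is an $E_1$-quasi-isomorphism, $E_1(\rho_\QQ)$ is a $d_1$-quasi-isomorphism, and as the source and target carry their canonical $1$-splittings this is exactly the condition for $E_1(\rho_\QQ)$ to be an $E_1$-quasi-isomorphism. Composing the three squares — the rational model square, the splitting square for $\wt f_\QQ$, and the $E_1$-image of the model square — and setting
$$
\Theta_A:=E_1(\rho_\QQ)\,\pi_M\,\rho_\QQ^{-1},\qquad \Theta_B:=E_1(\rho'_\QQ)\,\pi_{\wt M}\,(\rho'_\QQ)^{-1}
$$
in $\Ho_1$, a short diagram chase gives isomorphisms with $\Theta_B\,f_\QQ=E_1(f_\QQ)\,\Theta_A$, which is precisely the square witnessing $E_1$-formality of $f_\QQ$.

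The hard part will be the finite-type input required by Lemma \ref{formalsQ}: one must check that the models $M$ and $\wt M$ produced by Theorem \ref{modelmorfisme} are of finite type. For $M$ this follows from the homotopy finite type of $A$ together with the uniqueness of minimal models up to isomorphism; for $\wt M$, which is only relative minimal over $M$, one argues that at each stage of the construction the new generators are governed by the cohomology of the relevant mixed cones, so that the finiteness of $H^*(B_\QQ)$ guaranteed by axiom $(\text{MH}_0)$ propagates to $\wt M$. Once finiteness is secured, the remaining steps are formal, the only delicate verification being that $E_1$ carries the structural $E_1$-quasi-isomorphisms $\rho_\QQ,\rho'_\QQ$ to isomorphisms of $\Ho_1$.
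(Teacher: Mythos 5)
Your proposal is correct and follows essentially the same route as the paper: take the relative minimal model from Theorem \ref{modelmorfisme}, obtain a $1$-splitting of $\wt f_\QQ$ over $\QQ$ via Lemma \ref{formalsQ} (which already packages the complex splitting of Lemma \ref{formalsC} and the descent argument), and transport $E_1$-formality to $f_\QQ$ through the three-square diagram of $E_1$-quasi-isomorphisms. Your extra care about why $E_1(\rho_\QQ)$ is again an $E_1$-quasi-isomorphism and about the finite type of $\wt M$ only fills in details the paper leaves implicit.
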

\begin{proof}
By Theorem $\ref{modelmorfisme}$ there exists a minimal model $\wt f:M\to \wt M$ of $f$.
By assumption both $M$ and $\wt M$ have finite type.
By Lemma $\ref{formalsC}$ $\wt f_\CC$ admits a 1-splitting.
Therefore $\wt f_\QQ$ admits a 1-splitting by Lemma $\ref{formalsQ}$.
We obtain a commutative diagram
$$
\xymatrix{
\ar[d]^{f_\QQ}(A_\QQ,d,W)&\ar[l]_-\sim(M_\QQ,d,W)\ar[r]^-\cong\ar[d]^{\wt f_\QQ}&(E_1(M_\QQ),d_1,W)\ar[d]^-{E_1(\wt f_\QQ)}\ar[r]^-\sim&(E_1(A_\QQ),d_1,W)\ar[d]^{E_1(f_\QQ)}\\
(B_\QQ,d,W)&\ar[l]_-\sim(\wt M_\QQ,d,W)\ar[r]^-\cong&(E_1(\wt M_\QQ),d_1,W)\ar[r]^-\sim&(E_1(B_\QQ),d_1,W)\\
}
$$
where the horizontal arrows are $E_1$-quasi-isomorphisms.
\end{proof}

The previous result can be restated in terms of a formality property for the forgetful functor
$$U^{ft}_\QQ:\Ho\left(\MHD^{ft}\right)\lra \Ho_1\left(\Fdga{ft}{\QQ}\right)$$
defined by sending every 0-connected mixed Hodge diagram with homotopy finite type to its rational component.
\begin{cor}
There is an isomorphism of functors $E_1\circ U^{ft}_\QQ\cong U_\QQ^{ft}$.
\end{cor}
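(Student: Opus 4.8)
The plan is to read the statement as the functorial packaging of Theorem~\ref{formals_MHD}. Both composites send a $0$-connected mixed Hodge diagram $A$ of homotopy finite type to a filtered dga over $\QQ$: the functor $U^{ft}_\QQ$ produces $(A_\QQ,d,W)$, while $E_1\circ U^{ft}_\QQ$ produces $(E_1(A_\QQ),d_1,W)$ with its natural weight filtration $W_pE_1=\bigoplus_{i\leq p}E_1^{-i,*}$. First I would check that $A\mapsto (E_1(A_\QQ),d_1,W)$ is a well-defined endofunctor of $\Ho_1(\Fdga{ft}{\QQ})$: the filtered dga $(E_1(A_\QQ),d_1,W)$ satisfies $d_1(W_p)\subset W_{p-1}$ and its induced $d_1$-cohomology is $E_2(A_\QQ)$, so an $E_1$-quasi-isomorphism $g$ induces a map $E_1(g)$ of the associated filtered dga's which is again an $E_1$-quasi-isomorphism and hence invertible in $\Ho_1$.

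Next I would produce the candidate natural isomorphism object by object. Fix a minimal model $\rho\colon M\hto A$ with $M$ of finite type (Theorem~\ref{minim_ho}). Since $M$ is minimal it is $E_1$-cofibrant, so $M_\CC$ admits a $1$-splitting by Lemma~\ref{formalsC}, and by descent (Theorem~\ref{descens_rsplittings} applied to $\mathrm{id}_M$) the rational component $M_\QQ$ admits a $1$-splitting as well. Proposition~\ref{splitting_ss} then provides an isomorphism of filtered dga's $\pi_M\colon (M_\QQ,d,W)\xrightarrow{\cong}(E_1(M_\QQ),d_1,W)$, and I set
\[
\eta_A:=E_1(\rho_\QQ)\circ\pi_M\circ\rho_\QQ^{-1}\colon (A_\QQ,W)\xrightarrow{\ \cong\ }(E_1(A_\QQ),d_1,W)
\]
in $\Ho_1$, where $\rho_\QQ$ and $E_1(\rho_\QQ)$ are invertible because $\rho$ is a quasi-isomorphism (Lemma~\ref{quis_es_we}). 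This is exactly the horizontal isomorphism appearing in the proof of Theorem~\ref{formals_MHD}.

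For naturality, given a morphism $f\colon A\to B$ I would take a relative minimal model $\wt f\colon M\to\wt M$ of $f$ (Theorem~\ref{modelmorfisme}) with $M$ a minimal model of $A$ and $\wt M$ a model of $B$. By Lemma~\ref{formalsQ} the map $\wt f_\QQ$ admits a $1$-splitting, which restricts to $1$-splittings of $M_\QQ$ and $\wt M_\QQ$ compatible with $\wt f_\QQ$; using these splittings to build $\eta_A$ and $\eta_B$, the square relating $f_\QQ$, $E_1(f_\QQ)$, $\eta_A$ and $\eta_B$ is precisely the commutative diagram of Theorem~\ref{formals_MHD}, so it commutes in $\Ho_1$.

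The main obstacle is coherence: the construction of $\eta_A$ a priori depends on the choice of minimal model and of the descended $1$-splitting, and I must check these choices yield a single well-defined transformation compatible with all morphisms at once. For the minimal model this follows from uniqueness up to $1$-homotopy (Theorem~\ref{rcofs}), which renders $E_1(\rho_\QQ)$ independent of $\rho$ in $\Ho_1$. For the splitting, two $1$-splittings of $M_\QQ$ differ by a filtered automorphism $u$ with $E_1(u)=\mathrm{id}$, i.e. an element of the unipotent kernel $\mathbf N$ of Proposition~\ref{algebraicos}; the cleanest fix is to pass to the skeleton of minimal objects of $\Ho(\MHD^{ft})$ and fix one minimal model together with one splitting per isomorphism class, so that the family $(\eta_A)$ is defined once and for all and the squares above verify naturality. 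Restating the resulting natural isomorphism $U^{ft}_\QQ\cong E_1\circ U^{ft}_\QQ$ then gives the Corollary.
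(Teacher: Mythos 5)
Your proposal matches the paper's treatment: the corollary is stated there as a direct restatement of Theorem \ref{formals_MHD} with no further argument, and your object-wise isomorphism $\eta_A$ and naturality square are exactly the horizontal arrows and commutative diagram of that theorem. Your extra discussion of coherence (independence of the minimal model and of the chosen $1$-splitting) goes beyond what the paper records and correctly isolates the only delicate point; the one small slip is invoking Theorem \ref{descens_rsplittings} directly for $M$, which requires finite generation --- the truncation argument of Lemma \ref{formalsQ}, which you cite later anyway, is the right tool there.
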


\section{Mixed Hodge Theory of Complex Algebraic Varieties}

\subsection*{Mixed Hodge diagrams associated with algebraic varieties}
We recall Navarro's functorial construction of mixed Hodge diagrams associated
 with complex algebraic varieties within the context of cohomological descent
categories and the extension criterion of functors of \cite{GN}.

\begin{nada}The Thom-Whitney simple functor defined by Navarro in \cite{Na} for
strict cosimplicial dga's is easily adapted to the cubical setting (see 1.7.3 of \cite{GN}).
Given a non-empty finite set $S$, denote by $L_S$ the dga 
over $\kk$ of smooth differential forms over the hyperplane of the affine space $\mathbb{A}^{S}_\kk$, defined by the equation 
$\sum_{s\in S}t_s=1.$ \\

For $r\geq 0$, let $\sigma[r]$ be the increasing filtration of $L_S$ defined by
letting $t_s$ be of pure weight $0$ and $dt_s$ of pure weight $-r$, for every generator $t_s$ of degree 0 of $L_S$,
and extending multiplicatively.
For every filtered dga $(A,W)$, we have a family of filtered dga's $L_S^r(A)$
indexed by $r\geq 0$, given by:
$$W_pL_S^r(A):=\bigoplus_q \left(\sigma[r]_qL_S\otimes W_{p-q}A\right).$$
\end{nada}

\begin{defi}
The \textit{$r$-Thom-Whitney simple} of 
a codiagram of filtered dga's $A=((A,W)^\alpha)$
is the filtered dga 
$\mathbf{s}^r_{TW}(A,W)$
defined by the end
$$W_p\mathbf{s}^r_{TW}(A)=\int_\alpha \bigoplus_q\left(\sigma[r]_qL_\alpha\otimes W_{p-q}A^\alpha\right).$$
For a codiagram of bifiltered dga's $A=((A,W,F)^\alpha)$, the
\textit{$(r,0)$-Thom-Whitney simple} $\mathbf{s}^{r,0}_{TW}(A,W,F)$ is defined analogously:
$$W_pF^q\mathbf{s}^{r,0}_{TW}(A)=\int_\alpha \bigoplus_{l}\left(\sigma[r]_{l}L_\alpha\otimes W_{p-l}F^qA^\alpha\right).$$
\end{defi}

\begin{defi}
Let $A$ be a cubical codiagram of mixed Hodge diagrams.
The \textit{Thom-Whitney simple of $A$} is the diagram of dga's
$$\mathbf{s}_{TW}(A)=\left(\mathbf{s}_{TW}^{1}(A_\QQ,W)
\stackrel{\mathbf{s}(\varphi)}{\dashleftarrow\dashrightarrow}
\mathbf{s}_{TW}^{1,0}(A_\CC,W,F)\right) 
.$$
\end{defi}

\begin{teo}\label{mhd_es_descens}
The category of mixed Hodge diagrams $\MHD$ with the class of quasi-isomorphisms and the Thom-Whitney 
simple functor $\mathbf{s}_{TW}$
is a cohomological descent category.
\end{teo}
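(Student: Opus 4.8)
The plan is to verify the axioms of a cohomological descent category in the sense of \cite{GN}: that the class of quasi-isomorphisms is saturated, that the simple functor $\mathbf{s}_{TW}$ is invariant under level-wise quasi-isomorphisms of cubical codiagrams, that it satisfies the normalization condition (the simple of a $0$-codiagram is canonically the object itself), that it is compatible with the iteration of simples on multicubical codiagrams, and the acyclicity/factorization condition relating the simple to the iterated cone. Saturation is immediate from the definition of quasi-isomorphisms of mixed Hodge diagrams. Normalization is elementary as well: for a one-element set $S=\{*\}$ the algebra $L_S$ is the forms on the hyperplane $t_*=1$, which is a point, so $L_{\{*\}}=\kk$ and the Thom-Whitney simple of a single mixed Hodge diagram recovers it, exactly as in Navarro's cosimplicial case.

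The strategy for the remaining axioms is to reduce each one to the underlying dga case together with a filtration computation. Navarro \cite{Na} established that $\dga{}{\kk}$, equipped with quasi-isomorphisms and the Thom-Whitney simple, is a cohomological descent category in the cosimplicial setting; the adaptation to cubical codiagrams (see 1.7.3 of \cite{GN}) is routine and applies verbatim to each vertex $A_i$ of a mixed Hodge diagram, the rational component being treated with $r=1$ and the complex component with $(r,0)=(1,0)$. The only additional content is the behaviour of the weight and Hodge filtrations. The key point is that, since $\sigma[r]$ places $t_s$ in pure weight $0$ and $dt_s$ in pure weight $-r$, the associated graded and the $E_r$-term of the $r$-Thom-Whitney simple are computed by the simple of the associated graded and of the $E_r$-term, in analogy with Lemma $\ref{rpathcommutes}$: one obtains a natural identification of the form
$$E_r\!\left(\mathbf{s}_{TW}^r(A)\right)\cong \mathbf{s}_{TW}\!\left(E_r(A)\right),$$
and likewise $Gr_F^q Gr_p^W$ of $\mathbf{s}_{TW}^{1,0}(A_\CC)$ is the Thom-Whitney simple of $Gr_F^q Gr_p^W A_\CC$, up to the acyclicity of $L_\bullet$. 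Invariance then follows: by Lemma $\ref{quis_es_we}$ a quasi-isomorphism of codiagrams of mixed Hodge diagrams is level-wise an $E_1$- (resp. $E_{1,0}$-) quasi-isomorphism, hence induces a quasi-isomorphism on each associated graded, and by the dga case applied to the simple of the graded pieces the induced map on $\mathbf{s}_{TW}$ is again an $E_1$- (resp. $E_{1,0}$-) quasi-isomorphism, which is precisely the required invariance.

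The main obstacle is to show that $\mathbf{s}_{TW}$ sends a cubical codiagram of mixed Hodge diagrams to a mixed Hodge diagram, i.e. that axioms $(\text{MH}_0)$, $(\text{MH}_1)$, $(\text{MH}_2)$ are preserved and that the comparison maps $\mathbf{s}(\varphi)$ remain $E_1$-quasi-isomorphisms. This is the multiplicative Thom-Whitney analogue of Deligne's theorem that the simple of a cubical mixed Hodge complex is a mixed Hodge complex, and I expect it to proceed exactly as in Lemma $\ref{mixedcone}$. Using the graded identification above, the differential of $Gr_p^W\mathbf{s}_{TW}^{1,0}(A_\CC)$ is assembled, up to the acyclic factor $L_\bullet$, from the differentials of the $Gr_p^W A_\CC^{\alpha}$, which are strictly $F$-compatible by $(\text{MH}_1)$ at each vertex; the strictness $(\text{MH}_1)$ and the purity $(\text{MH}_2)$ of the resulting weight-graded cohomology then follow from the degeneration of the Hodge and weight spectral sequences (Scholie 8.1.9 of \cite{DeHIII}) applied to the simple, together with the fact that the Hodge structures on the cohomology of $\mathbf{s}_{TW}$ are functorially built from those of the vertices $A^{\alpha}$. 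That $\mathbf{s}(\varphi)$ is an $E_1$-quasi-isomorphism is immediate from $E_1(\mathbf{s}_{TW}^1(-))\cong\mathbf{s}_{TW}(E_1(-))$ and invariance. Finally, the iteration and acyclicity axioms are verified on associated graded by the same reduction, using that the Thom-Whitney end commutes with the formation of $Gr^W$ and $Gr_F$, so that each reduces to the already established dga case.
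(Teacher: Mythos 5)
Your argument is correct in outline but takes a genuinely different route from the paper. You verify the axioms of a cohomological descent category one by one, reducing each to the known dga case by passing to associated gradeds and $E_r$-terms. The paper instead invokes the transfer machinery of \cite{GN}: since the forgetful functor $U_\QQ:\MHD\to\dga{}{\QQ}$ commutes with the Thom--Whitney simple, and the quasi-isomorphisms of $\MHD$ are precisely the morphisms sent by $U_\QQ$ to quasi-isomorphisms of dga's (because the comparison maps $\varphi_u$ are themselves quasi-isomorphisms), Proposition 1.5.12 of \cite{GN} lifts the cohomological descent structure on dga's (Proposition 1.7.4 of loc.\ cit.) to $\MHD$ wholesale. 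This outsources all of your axiom-checking --- saturation, invariance, normalization, iteration, acyclicity --- to one general statement; the only substantive input left is that $\mathbf{s}_{TW}$ of a cubical codiagram of mixed Hodge diagrams is again a mixed Hodge diagram, which the paper obtains from 7.11 of \cite{Na} after reducing to the strict cosimplicial case. That point is also the softest part of your write-up: you assert identifications such as $E_r(\mathbf{s}^r_{TW}(A))\cong\mathbf{s}_{TW}(E_r(A))$ and the compatibility of the end with $Gr^W$ and $Gr_F$ ``in analogy with Lemma \ref{rpathcommutes}'', but the simple is an end (a limit) rather than a tensor product with $\Lambda(t,dt)$, so these commutations need an argument (finiteness of the cubical index and the explicit description of $W_p\mathbf{s}^r_{TW}$ as an end of direct sums); and you only say the axioms $(\text{MH}_1)$, $(\text{MH}_2)$ ``should follow as in Lemma \ref{mixedcone}'' without carrying it out. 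Both of these are exactly what Navarro's 7.11 supplies, so it is better cited than re-derived. What your direct route buys is a self-contained picture of why each axiom holds at the level of weight-graded pieces; what the paper's route buys is brevity and the conceptual point that the descent structure on $\MHD$ is created by $U_\QQ$.
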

\begin{proof}
The Thom-Whitney simple of a cubical codiagram of mixed Hodge diagrams is a mixed Hodge diagram. Indeed, it
suffices to prove that the associated functor of strict cosimplicial objects is a mixed Hodge diagram.
This follows from 7.11 of \cite{Na}. Consider the functor $U_\QQ:\MHD\lra \dga{}{\QQ}$ 
defined by sending every mixed Hodge diagram $A$ to the dga $A_\QQ$ over $\QQ$.
This functor commutes with the Thom-Whitney simple. The class of quasi-isomorphisms of mixed Hodge diagrams
is obtained by lifting the class of quasi-isomorphisms of dga's.
By Proposition 1.7.4 of \cite{GN} the category of dga's admits a cohomological descent structure.
Hence by Proposition 1.5.12 of loc.cit., this lifts to a cohomological descent structure on $\MHD$.
\end{proof}

Denote by $\Sch{}{\CC}$ the category of complex reduced schemes,
that are separated and of finite type.

\begin{teo}[\cite{Na}, $\S$9]\label{extensionavarro}
There exists a functor $$\HH dg:\Sch{}{\CC}\lra \Ho\left(\MHD\right)$$
satisfying the following conditions:
\begin{enumerate}[(1)]
\item The rational component of $\HH dg(X)$ is $A_\QQ(X)\cong \Aa_{Su}(X^{an};\QQ)$.
\item The cohomology $H(\HH dg(X))$ is the mixed Hodge structure of the cohomology of $X$.
\end{enumerate}
\end{teo}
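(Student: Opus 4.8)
The statement to prove (Theorem~\ref{extensionavarro}) asserts the existence of a functor $\HH dg:\Sch{}{\CC}\to\Ho(\MHD)$ extending the mixed Hodge theory of cohomology to the level of diagrams of dga's. The plan is to invoke the extension criterion for functors of Guillén--Navarro \cite{GN}, using the cohomological descent structure on $\MHD$ that was just established in Theorem~\ref{mhd_es_descens}. The strategy avoids any direct construction on singular varieties: instead, one defines the functor on the subcategory of smooth proper schemes (where mixed Hodge theory is classical and functorial) and then extends it uniquely, up to the homotopy relation given by quasi-isomorphisms, to all separated schemes of finite type by descent along hyperresolutions.

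First I would record that $\MHD$, equipped with the class of quasi-isomorphisms and the Thom--Whitney simple functor $\mathbf{s}_{TW}$, is a cohomological descent category by Theorem~\ref{mhd_es_descens}; this is precisely the hypothesis needed to apply the extension criterion. Next I would define the elementary functor on smooth proper complex varieties: to such an $X$ one associates the mixed Hodge diagram whose rational component is the Sullivan--de Rham algebra $\Aa_{Su}(X^{an};\QQ)$ and whose complex component carries the Hodge and (trivial, up to shift) weight filtrations coming from the pure Hodge structure on $H(X)$, with the comparison maps provided by the classical de Rham and integration comparisons. The key verification here is that on a smooth proper variety this data genuinely satisfies the axioms $(\mathrm{MH}_0)$, $(\mathrm{MH}_1)$, $(\mathrm{MH}_2)$ of a mixed Hodge diagram; this is where the purity of the Hodge structure on the cohomology of a smooth projective variety enters, and it follows from Navarro's construction (7.11 of \cite{Na}).

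I would then apply the Guillén--Navarro extension theorem: a functor defined on smooth proper varieties and sending elementary acyclic squares (blow-ups along smooth centers, together with the empty variety) to distinguished squares in the descent structure extends, essentially uniquely, to a functor on all of $\Sch{}{\CC}$ landing in $\Ho(\MHD)$. Here the Thom--Whitney simple plays the role of the simple functor controlling descent along cubical hyperresolutions, so that for a general $X$ one sets $\HH dg(X)$ to be $\mathbf{s}_{TW}$ applied to the diagram obtained from a hyperresolution of $X$ by smooth proper pieces; the resulting object is well-defined in the homotopy category because $\mathbf{s}_{TW}$ sends quasi-isomorphisms of codiagrams to quasi-isomorphisms. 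Finally, properties (1) and (2) are checked by compatibility: the forgetful functor $U_\QQ$ commutes with $\mathbf{s}_{TW}$ (as noted in the proof of Theorem~\ref{mhd_es_descens}), so the rational component of $\HH dg(X)$ computes $\Aa_{Su}(X^{an};\QQ)$, and taking cohomology recovers Deligne's mixed Hodge structure on $H^*(X)$ by the uniqueness of the extension.

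The main obstacle I expect is not the formal invocation of the extension criterion but the verification that the elementary functor on smooth proper varieties satisfies the hypotheses of the criterion in the multiplicative (dga) setting rather than merely the complex-theoretic one. Concretely, one must check that blow-up squares are sent to distinguished squares \emph{compatibly with the algebra structures and all three levels ($\QQ$, $\CC$, and the intermediate comparison dga's)}, not just on underlying filtered complexes. This is exactly the content that Navarro establishes in \cite{Na}, and the cleanest route is to cite that work for the elementary functor and reserve the present argument for assembling these pieces through Theorem~\ref{mhd_es_descens} and the abstract extension machinery.
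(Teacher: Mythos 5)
Your overall strategy---use Theorem~\ref{mhd_es_descens} to get a cohomological descent structure on $\MHD$ and then invoke the Guill\'{e}n--Navarro extension criterion---is the same as the paper's. But there is a genuine gap in your choice of the elementary functor's domain. You propose to define $\HH dg$ only on \emph{smooth proper} varieties and then extend by descent along hyperresolutions. Descent for elementary acyclic (blow-up) squares of smooth proper varieties only determines the extension on proper schemes (or a compactly-supported variant); it gives you no way to compute $\HH dg(U)$ for a smooth \emph{open} variety $U$, since $U$ admits no resolution by smooth proper pieces in the blow-up-square sense. And the value on open varieties is exactly where the content lies: $H^*(U)$ carries a non-pure weight filtration, which cannot be assembled from the pure (``trivial up to shift'') filtrations you place on smooth proper pieces using only that descent data. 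The paper instead takes as input Navarro's Theorem 8.15, which constructs $\HH dg$ on the category $\mathbf{V}^2(\CC)$ of \emph{pairs} $(X,U)$ with $X$ smooth projective and $D=X-U$ a normal crossings divisor---concretely via the logarithmic de Rham complex with its weight filtration by pole order and a rational counterpart---and then applies the version of the extension criterion for functors on pairs (Theorem 2.3.6 of \cite{GN}), whose elementary acyclic squares are blow-up squares of pairs. The normal crossings divisor at infinity is the extra cubical direction that produces the mixed weight filtration; omitting it is not a simplification but a loss of the essential input.

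A secondary issue: you define the filtrations on the complex component as ``coming from the pure Hodge structure on $H(X)$.'' A mixed Hodge diagram requires $W$ and $F$ as filtrations of the dga itself, at the cochain level, satisfying $(\mathrm{MH}_1)$ and $(\mathrm{MH}_2)$; filtrations on cohomology are not enough, and there is no canonical way to lift them. This is again supplied by the logarithmic model in Navarro's construction, which is why the paper cites Theorem 8.15 of \cite{Na} for the elementary functor rather than building it from the Hodge structure on cohomology.
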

\begin{proof}
Denote by $\mathbf{V}^2(\CC)$ the category of pairs $(X,U)$ where $X$ is a smooth projective scheme
over $\CC$ and $U$ is an open subscheme of $X$ such that $D=X-U$ is a normal crossings divisor.
By Theorem 8.15 of \cite{Na} there exists a functor $\HH dg:\mathbf{V}^2(\CC)\lra\MHD$ such that:
\begin{enumerate}[(1')]
\item The rational component of $\HH dg(X,U)$ is $A_\QQ(U)\cong \Aa_{Su}(U^{an};\QQ)$.
\item The cohomology $H(\HH dg(X,U)$ is the mixed Hodge structure of the cohomology of $U$.
\end{enumerate}
By Theorem $\ref{mhd_es_descens}$ the Thom-Whitney simple endows the category of mixed Hodge diagrams with a cohomological descent structure.
For every elementary acyclic diagram
$$
\xymatrix{
\ar[d]_g(\wt Y,\wt U\cap \wt Y)\ar[r]^j&(\wt X,\wt U)\ar[d]^{f}\\
(Y,U\cap Y)\ar[r]^i&(X,U)&
}
$$
of $\mathbf{V}^2(\CC)$, the mixed Hodge diagram $\HH dg(X,U)$ is quasi-isomorphic to the Thom-Whitney simple of
the mixed Hodge diagrams associated with the remaining components.
Therefore the functor 
$$\mathbf{V}_{\CC}^2\xra{\HH dg}\MHD\stackrel{\gamma}{\lra}\Ho\left(\MHD\right)$$
satisfies the hypothesis of Theorem 2.3.6 of loc.cit. on the extension of functors.
\end{proof}

\subsection*{Formality} The previous theorem together with the results of Section $\ref{homotopytheory}$
 lead to the main result of this paper on the $E_1$-formality of complex algebraic varieties.

\begin{teo}\label{formalitat_vars}
Let $f:Y\to X$ be a morphism of complex algebraic varieties.
If $X$ and $Y$ are nilpotent spaces then
the rational $E_1$-homotopy type of $f$ is a formal consequence of the first term of spectral sequence associated with the weight filtration:
there exists a diagram
$$
\xymatrix{
\ar[d]^{f_\QQ}(A_\QQ(X),W)&(M_X,W)\ar[d]^{\wt f_\QQ}\ar[l]_-\sim\ar[r]^-\sim&(E_1(A_\QQ(X)),W)\ar[d]^{E_1(f_\QQ)}\\
(A_\QQ(Y),W)&(\wt M_Y,W)\ar[l]_-\sim\ar[r]^-\sim&(E_1(A_\QQ(Y)),W)\\
}
$$
which commutes in the homotopy category $\Ho_1(\Fdga{}{\QQ})$.
\end{teo}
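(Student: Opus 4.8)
The plan is to realise $f$ as a morphism of mixed Hodge diagrams and then to invoke the $E_1$-formality of such morphisms established in Theorem \ref{formals_MHD} and its functorial restatement. By Navarro's construction (Theorem \ref{extensionavarro}) the map $f:Y\to X$ induces a morphism $\HH dg(f):\HH dg(X)\to\HH dg(Y)$ in $\Ho(\MHD)$, whose image under the forgetful functor $U_\QQ:\Ho(\MHD)\to\Ho_1(\Fdga{}{\QQ})$ is the filtered Sullivan-de Rham morphism $f_\QQ:(A_\QQ(X),W)\to(A_\QQ(Y),W)$, with $A_\QQ(X)\simeq\Aa_{Su}(X^{an};\QQ)$. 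Thus the statement will follow once we know that $\HH dg(f)$ is a morphism of the subcategory $\Ho(\MHD^{ft})$ of $0$-connected diagrams of homotopy finite type, to which the functorial $E_1$-formality applies.

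First I would check the two standing hypotheses. Since a nilpotent space is by definition connected, $H^0(A_\QQ(X))=\QQ=H^0(A_\QQ(Y))$, so both diagrams are $0$-connected. The essential point is the homotopy finite type condition: as $X$ and $Y$ are schemes of finite type over $\CC$, their rational cohomology is finite-dimensional in each degree, and the nilpotency hypothesis is precisely what promotes this to finiteness of the minimal Sullivan model. Concretely, the minimal mixed Hodge dga $M$ produced by Theorem \ref{minim_ho} has rational component $M_\QQ$ which is the Sullivan minimal model of $A_\QQ(X)\simeq\Aa_{Su}(X^{an};\QQ)$; for a nilpotent space with rational cohomology of finite type this model is of finite type, and since finiteness of the underlying graded module is insensitive to the field of definition, $M$ itself is of finite type. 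Hence $\HH dg(X)$ and $\HH dg(Y)$ have homotopy finite type.

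It remains to conclude. By the functorial formality statement following Theorem \ref{formals_MHD}, the natural isomorphism $E_1\circ U^{ft}_\QQ\cong U^{ft}_\QQ$ on $\Ho(\MHD^{ft})$, evaluated at $\HH dg(f)$, yields a commutative square in $\Ho_1(\Fdga{}{\QQ})$ whose horizontal arrows are the isomorphisms $(A_\QQ(X),W)\cong(E_1(A_\QQ(X)),W)$ and $(A_\QQ(Y),W)\cong(E_1(A_\QQ(Y)),W)$ and whose vertical arrows are $f_\QQ$ and $E_1(f_\QQ)$. Each horizontal isomorphism is realised by the rational component of the minimal model of Theorem \ref{minim_ho} carrying the $1$-splitting of Lemmas \ref{formalsC} and \ref{formalsQ}; unfolding it as the roof $(A_\QQ(X),W)\xla{\sim}(M_X,W)\xra{\sim}(E_1(A_\QQ(X)),W)$ recovers the zig-zag displayed in the statement. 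Finally Theorem \ref{extensionavarro}.(1) identifies the rational components with $\Aa_{Su}(X^{an};\QQ)$ and $\Aa_{Su}(Y^{an};\QQ)$, so that $f_\QQ$ models the rational $E_1$-homotopy type of $f$.

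The main obstacle is the homotopy finite type verification: one must argue that nilpotency upgrades the mere finite-dimensionality of cohomology, automatic for finite-type schemes, to finiteness of the minimal model, and that the Sullivan-de Rham model of Theorem \ref{extensionavarro} genuinely computes the rational homotopy type of $f$; both fail without nilpotency, which is why the hypothesis cannot be dropped. A secondary, purely formal, point is that $\HH dg(f)$ a priori lives in $\Ho(\MHD)$ rather than in $\MHD$, so Theorem \ref{formals_MHD} must be used in its functorial form, where naturality of $E_1$-formality holds for all morphisms of the localized category.
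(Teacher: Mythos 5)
Your proposal is correct and follows essentially the same route as the paper: realise $f$ via Navarro's functor $\HH dg$, observe that nilpotency together with the finite type of the schemes gives $0$-connected mixed Hodge diagrams of homotopy finite type, and conclude by Theorem \ref{formals_MHD}. The paper's own proof is a three-line version of exactly this argument; your additional care about the homotopy finite type verification and about $\HH dg(f)$ living only in the localized category $\Ho(\MHD)$ fills in details the paper leaves implicit.
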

\begin{proof}
By Theorem $\ref{extensionavarro}$ there is a functor
$\HH dg:\Sch{}{\CC}\lra\Ho(\MHD)$ whose rational component is the Sullivan-de Rham functor
$X\mapsto A_\QQ(X)=\Aa_{Su}(X^{an};\QQ)$. In addition, for a nilpotent space $X$, the minimal model of $A_\QQ(X)$ has finite type.
The result follows from Theorem $\ref{formals_MHD}$.
\end{proof}

The previous result can be restated in terms of a formality property for the composite functor
$$\Aa^{nil}_\QQ:\Sch{nil}{\CC}\xra{\HH dg}\Ho\left(\MHD^{ft}\right)\xra{U_\QQ} \Ho_1\left(\Fdga{ft}{\QQ}\right)$$
defined by sending nilpotent complex algebraic varieties to their Sullivan-de Rham algebra endowed with the multiplicative weight filtration.
\begin{cor}
There is an isomorphism of functors
$E_1\circ\Aa_\QQ^{nil} \cong \Aa_\QQ^{nil}$.
\end{cor}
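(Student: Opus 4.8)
The plan is to package the machinery developed in Sections 2 and 3 into a single functorial statement. First I would invoke Theorem \ref{extensionavarro} to obtain the functor $\HH dg:\Sch{}{\CC}\lra\Ho(\MHD)$, whose value on a morphism $f:Y\to X$ is a morphism of mixed Hodge diagrams in the homotopy category, with rational component the Sullivan--de Rham algebra $A_\QQ(X)=\Aa_{Su}(X^{an};\QQ)$ equipped with its multiplicative weight filtration $W$. The entire argument then reduces to verifying that this morphism falls within the scope of Theorem \ref{formals_MHD}, so the forward-looking strategy is simply to check the two hypotheses of that theorem, namely $0$-connectedness and homotopy finite type, and to read off the commutative diagram it produces.

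The first hypothesis, $0$-connectedness, should be immediate: for a connected variety $X$ one has $H^0(A_\QQ(X))\cong\QQ$, so the unit map induces the required isomorphism. The substantive input is homotopy finite type. Here I would use the nilpotence assumption on $X$ and $Y$: for a nilpotent space the rational homotopy type admits a Sullivan minimal model of finite type, and by Theorem \ref{minim_ho} the mixed Hodge diagram $\HH dg(X)$ admits a minimal mixed Hodge dga model $M\hto \HH dg(X)$ whose rational component is precisely this minimal model. Thus $\HH dg(X)$ and $\HH dg(Y)$ lie in $\MHD^{ft}$, and $f$ becomes a morphism of $0$-connected mixed Hodge diagrams of homotopy finite type.

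Having verified the hypotheses, I would apply Theorem \ref{formals_MHD} directly: it asserts that the rational component $f_\QQ$ is $E_1$-formal, producing exactly the commutative diagram of $E_1$-quasi-isomorphisms relating $(A_\QQ(X),W)$ and $(A_\QQ(Y),W)$ to the first terms $E_1(A_\QQ(X))$ and $E_1(A_\QQ(Y))$ of their weight spectral sequences, with the vertical maps $f_\QQ$ and $E_1(f_\QQ)$. Tracing through Theorem \ref{modelmorfisme} identifies the intermediate objects $M_X$ and $\wt M_Y$ as the minimal and relative minimal models, so the diagram in the statement is obtained by restricting the diagram of Theorem \ref{formals_MHD} to its outer columns and the minimal-model column.

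The main obstacle I anticipate is not in the formal deduction, which is essentially a matter of assembling the preceding results, but in the finiteness verification underpinning homotopy finite type. One must genuinely use that nilpotence guarantees a \emph{finite-type} minimal model, since Lemmas \ref{formalsC} and \ref{formalsQ}, and the descent Theorem \ref{descens_rsplittings}, all require the $E_1$-cofibrant dga's in play to be finitely generated; the rational descent of the $1$-splitting from $\CC$ to $\QQ$ in Lemma \ref{formalsQ} fails without this control. Beyond that point the proof is purely a citation of Theorem \ref{formals_MHD}, so I would keep the argument short and let the functoriality of $\HH dg$ carry the morphism-level statement.
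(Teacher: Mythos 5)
Your proposal is correct and follows essentially the same route as the paper: the corollary is presented there as a direct restatement of Theorem \ref{formalitat_vars}, whose proof is exactly your argument — combine the functor $\HH dg$ of Theorem \ref{extensionavarro} with Theorem \ref{formals_MHD}, using nilpotence to guarantee the finite-type minimal model needed for the descent of the $1$-splitting from $\CC$ to $\QQ$. Your identification of finiteness as the substantive hypothesis is exactly where the paper places the weight as well.
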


\section{An Application: The Hopf Invariant}
We provide an expression of the Hopf invariant of algebraic morphisms 
$f:\CC^2\setminus\{0\}\to \PP^1_\CC$ in the context of algebraic geometry, using Theorem $\ref{formalitat_vars}$ and intersection theory.
The results of this section can be easily generalized to morphisms
$f:\CC^{n+1}\setminus \{0\}\to \PP_\CC^{n}$, for $n\geq 1$. However, for the sake of simplicity we shall only 
develop the case $n=1$. At the end of the section we compute the Hopf invariant for a particular
 class of morphisms which includes the Hopf fibration.

\subsection*{The Hopf invariant}
We first recall Whitehead's definition of the Hopf invariant in the context
of differential forms and show that it
can be computed in the context of rational homotopy,
via Sullivan minimal models.
\\

Consider a differentiable morphism $f:S^{3}\to S^2$. Denote by
$f^*:\Aa_{dR}(S^2)\lra \Aa_{dR}(S^3)$ the induced morphism of algebras.
Choose fundamental classes $[S^2]$ and $[S^3]$ together with normalized volume forms $w_2$ and $w_3$ of $S^2$ and $S^3$ respectively,
satisfying
$\int_{S^2}w_2=\int_{S^3} w_3=1.$
Let $\theta$ be a one-form in $\Aa_{dR}(S^2)$ such that $f^*(w_2)=d\theta$.
The \textit{Hopf invariant of $f$} is defined by
$$H(f):=\int_{S^{3}}\theta\wedge d\theta.$$
While the definition of $H(f)$ is independent of the choice of $\theta$ and the orientation of $S^2$,
it does depend on the choice of orientation of $S^3$.
Homotopic maps have the same Hopf invariant.
Geometrically, $H(f)$ is given by the linking number of pre-images of two distinct regular values of $f$.
In particular it is always an integer number, and it defines a homomorphism
$H:\pi_3(S^2)\lra \ZZ$ (see e.g. $\S$18 of \cite{BottTu}).

\begin{nada}\label{modelsesferes}
We define the normalized minimal model of a continuous map $f:S^{3}\to S^2$ as follows.
Let $M(S^2)=\Lambda(\alpha,\beta)$ be the free $\QQ$-dga generated by $\alpha$ in degree 2 and $\beta$ in degree 3 with
differentials $d\alpha=0$ and $d\beta=\alpha^2$.
The morphism $\rho_2:M(S^2)\to \Aa_\QQ(S^2)$ defined by sending $\alpha$ to the volume form $w_2$ of $S^2$ is a Sullivan minimal model of $S^2$.
Likewise, let $M(S^3)=\Lambda(\gamma)$ be the free $\QQ$-dga generated by $\gamma$ in degree 3 with trivial differential.
The morphism
$\rho_3:M(S^3)\to \Aa_\QQ(S^3)$ defined by sending $\gamma$ to the volume form $w_3$ of $S^3$ is a Sullivan minimal model of $S^3$.
By dimensional arguments, every possible morphism $\wt f_\lambda:M(S^2)\to M(S^3)$ is of the form $\alpha\mapsto 0$ and 
$\beta\mapsto \lambda\cdot \gamma$, with $\lambda\in\QQ$.
Furthermore, any two such homotopic morphisms coincide.
The map $\wt f_\lambda$ is a minimal model of $f^*$ if and only if the diagram
$$
\xymatrix{\ar@{}[dr] |{\simeq}
\Aa_\QQ(S^2)\ar[r]^{f^*}&\Aa_\QQ(S^3)\\
M(S^2)\ar[u]^{\rho_2}\ar[r]^{\widetilde f_\lambda}&M(S^3)\ar[u]_{\rho_3}
}
$$
commutes up to homotopy.
In such case we say that
the above diagram is a \textit{normalized minimal model} of $f^*$ with respect to the chosen volume forms.
Note that as in the definition of the Hopf invariant, the sign of $\lambda$ depends on the choice of orientation of $S^3$.
\end{nada}

\begin{prop}\label{hopf}
Let $f:S^3\to S^2$ be a differentiable morphism. Then $\wt f_\lambda$
is a normalized minimal model of $f^*$ if and only if $H(f)=\lambda$.
\end{prop}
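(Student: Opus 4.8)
The statement unwinds to a concrete comparison of two maps of dga's. By \ref{modelsesferes}, saying that $\wt f_\lambda$ is a normalized minimal model of $f^*$ is exactly saying that the two morphisms $f^*\rho_2$ and $\rho_3\wt f_\lambda$ from $M(S^2)$ to $\Aa_\QQ(S^3)$ are homotopic as morphisms of dga's. The plan is to build such a homotopy explicitly and read off the obstruction to its existence. First I would pass to the de Rham model: since $M(S^2)$ is finite and both the coefficient $\lambda$ and the homotopy obstruction live in $\QQ$-vector spaces whose vanishing is detected after $\otimes\,\RR$, and since $\Aa_\QQ\otimes\RR$ is quasi-isomorphic to $\Aa_{dR}$, it suffices to produce (or obstruct) the homotopy inside $\Aa_{dR}(S^3)$. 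Working there, the two maps take the values $f^*\rho_2(\alpha)=f^*w_2=d\theta$ and $f^*\rho_2(\beta)=0$ (because $\Aa^3_{dR}(S^2)=0$ forces $\rho_2(\beta)=0$), against $\rho_3\wt f_\lambda(\alpha)=0$ and $\rho_3\wt f_\lambda(\beta)=\lambda w_3$.

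Next I would construct a candidate homotopy $H:M(S^2)\to \Aa_{dR}(S^3)\otimes\Lambda(t,dt)$ with $\delta^0H=\rho_3\wt f_\lambda$ and $\delta^1H=f^*\rho_2$. On the closed generator $\alpha$ the form $\theta$ provides the witness: set $H(\alpha):=d(t\theta)=t\,d\theta-\theta\,dt$, which is closed and satisfies $\delta^0H(\alpha)=0$ and $\delta^1H(\alpha)=d\theta=f^*w_2$. The defining relation $d\beta=\alpha^2$ then forces
$$dH(\beta)=H(\alpha)^2=-2t\,(\theta\wedge d\theta)\,dt,$$
so that extending $H$ over $\beta$ with the prescribed endpoints $\delta^0H(\beta)=\lambda w_3$ and $\delta^1H(\beta)=0$ becomes the problem of solving this equation for $H(\beta)$ with those boundary values, and its solvability is a purely cohomological condition on $S^3$.

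To extract that condition I would apply the fibre-integration formula $\int_0^1 d\omega+d\int_0^1\omega=\delta^1\omega-\delta^0\omega$ to $\omega=H(\beta)$. Using $\int_0^1 H(\alpha)^2=-\theta\wedge d\theta$ this gives $-\theta\wedge d\theta+d\int_0^1 H(\beta)=-\lambda w_3$; integrating over $S^3$, Stokes annihilates the exact term and $\int_{S^3}w_3=1$, leaving $\lambda=\int_{S^3}\theta\wedge d\theta=H(f)$. This settles both implications at once: if the square homotopy-commutes the computation forces $\lambda=H(f)$, while conversely $\lambda=H(f)$ makes $\lambda w_3$ and $\theta\wedge d\theta$ cohomologous, so the equation for $H(\beta)$ is solvable, the homotopy exists over $\RR$, and hence over $\QQ$ by the rational detectability noted above. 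The hard part is exactly this middle step: recognizing that the obstruction to propagating the homotopy across the quadratic relation $d\beta=\alpha^2$ is a secondary, Massey-type operation and identifying it with the Hopf integral, while tracking the signs — which, as already noted, depend on the orientation of $S^3$ and match on both sides under a consistent choice.
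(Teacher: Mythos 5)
Your proposal is correct and follows essentially the same route as the paper: both build the explicit homotopy with $h(\alpha)=d(t\theta)$, propagate it across the relation $d\beta=\alpha^2$, and identify the resulting obstruction with $\int_{S^3}\theta\wedge d\theta$ (the paper simply takes $h(\beta)=\lambda w_3(1-t^2)$ and reads off the condition $d\theta\cdot\theta=\lambda w_3$, which is your fibre-integration computation in closed form). Your extra care on the converse direction (solvability of the equation for $H(\beta)$ when $\theta\wedge d\theta$ and $\lambda w_3$ are merely cohomologous) and on descending the homotopy from $\RR$ to $\QQ$ fills in details the paper leaves implicit, but does not change the argument.
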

\begin{proof}
Let $\theta$ be a one-form of $\Aa_\QQ(S^3)$ satisfying $d\theta=f^*(w_2)$.
Define $h:M(S^2)\to \Aa_\QQ(S^3)\otimes\Lambda(t,dt)$ by letting
$h(\alpha)=d(\theta\cdot t)$ and $h(\beta)=\lambda w_3(1-t^2).$
Then $\delta^0h=\rho_3\circ \wt f_\lambda$ and $\delta^1 h=f^* \circ\rho_2$.
For $h$ to be a morphism of dga's it is necessary and sufficient that $h(\alpha)^2=dh(\beta)$.
This is the case only when $d\theta \cdot \theta=\lambda w_3$.
We have
$H(f)=\int_{S^3} d\theta\cdot \theta=\int_{S^3}\lambda w_3=\lambda.$
\end{proof}

\subsection*{Weight spectral sequence}
We study the rational homotopy type and the Hopf invariant of certain algebraic morphisms
of complex algebraic varieties, via the weight filtration.
\begin{defi}
Let $f:\CC^{2}\setminus\{0\}\to \PP^{1}_\CC$ be a morphism of complex algebraic varieties,
and let $i:S^{3}\hookrightarrow \CC^{2}\setminus\{0\}$ denote the inclusion.
We call $H(f\circ i)$ the \textit{Hopf invariant of $f$}.
\end{defi}
Consider the smooth compactification
$U:=\CC^{2}\setminus\{0\}\hookrightarrow X:=\widetilde{\PP}^{2}$
into the blown-up complex projective plane at the origin. We have a diagram
$$
\xymatrix{
\ar[d]\PP^1_\infty\ar[r]^-{i}&\widetilde{\PP}^{2}\ar[d]^\pi&\PP^1_E\ar[l]_-{j}\ar[d]\\
\PP^1_\infty\ar[r]&\PP^{2}&\{0\}\ar[l]
}
$$
where $\PP^{1}_\infty$ and $\PP^{1}_E$ are complex projective lines denoting the hyperplane at infinity and the exceptional divisor respectively.
The cohomology ring of $\wt\PP^{2}$ is given by
$$H^*(\widetilde{\PP}^{2};\QQ)=\QQ\langle a,b\rangle,\text{ with }a\cdot b=0\text{ and }a^{2}=- b^{2},$$
where $a=i_* 1_\infty$ and $b=j_*1_E$ denote the classes of $\PP^{1}_\infty$ and $\PP^{1}_E$ respectively.
The cohomology ring of the complement $D:=\widetilde{\PP}^{2}_\CC-U=\PP^{1}_\infty\sqcup \PP^1_E$ can be written as
$$H^*(D;\QQ)=\QQ\langle x,y\rangle,\text{ with }x\cdot y=0,\,x^{2}=0\text{ and }y^{2}=0,$$
where $x$ and $y$ denote the classes of a point in $\PP^{1}_\infty$ and $\PP^{1}_E$ respectively.\\

The differentials and products of the weight
spectral sequence can be computed in the Chow rings, using intersection theory.
We will use the following result (see Proposition 2.6 of \cite{Fulton}).
\begin{prop}\label{cherns}
Let $j:D\to X$ denote the inclusion of a Cartier divisor $D$ on a scheme $X$.
\begin{enumerate}[(a)]
\item If $\alpha$ is a cycle on $X$ then $j_*j^*\alpha=c_1(\Oo_X(D))\cap \alpha$.
\item If $\alpha$ is a cycle on $D$ then $j^*j_*\alpha=c_1(N_D)\cap \alpha$, where $N_D=j^*(\Oo_X(D))$.
\end{enumerate}
\end{prop}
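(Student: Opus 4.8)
The statement is part of the standard divisor calculus in intersection theory — it is essentially Proposition 2.6 of \cite{Fulton} — so my plan is to recall the relevant constructions from \cite{Fulton} and match conventions, rather than to build anything new. By linearity I would immediately reduce both assertions to the case $\alpha=[V]$ for $V$ an irreducible subvariety, so that throughout $j\colon D\hookrightarrow X$ is the inclusion of an effective Cartier divisor, a regular embedding of codimension one. This lets me use the refined Gysin homomorphism $j^{*}$ (intersection with $D$) together with the proper pushforward $j_{*}$ between the Chow groups of $D$ and of $X$.

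For part (a) I would recall that, in Fulton's framework, the first Chern class operation is \emph{defined} precisely by intersecting with a Cartier divisor. For a $k$-cycle $\alpha$ on $X$ the refined intersection class $D\cdot\alpha$ lives in the Chow group of $|D|\cap|\alpha|$; its image under the inclusion $|D|\cap|\alpha|\hookrightarrow D$ is $j^{*}\alpha$, while its image in the Chow group of $X$ is by definition $c_{1}(\Oo_X(D))\cap\alpha$. Since pushing forward along $j$ realizes exactly this latter image, the identity $j_{*}j^{*}\alpha=c_{1}(\Oo_X(D))\cap\alpha$ follows at once; the only thing to verify is that the two realizations of $D\cdot\alpha$ agree, which is built into Fulton's construction of $c_1$ via pseudo-divisors.

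For part (b) the content is the self-intersection formula. Here $j_{*}\alpha$ is a cycle on $X$, and applying $j^{*}$ means intersecting again with $D$; because $\alpha$ is already supported on $D$, this is an excess-intersection situation whose excess bundle is the normal bundle $N_D=j^{*}\Oo_X(D)$. The excess intersection formula then gives $j^{*}j_{*}\alpha=c_{1}(N_D)\cap\alpha$, using that $N_D$ has rank one so its top Chern class is $c_1(N_D)$. Concretely, this is the compatibility of the Gysin map of the regular codimension-one embedding $j$ with first Chern classes.

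The routine half is (a), which is bookkeeping with the definitions. The genuine content, and the step I would expect to be the main obstacle in any self-contained argument, is the self-intersection formula of (b): it rests on Fulton's theory of the Gysin homomorphism for regular embeddings — deformation to the normal cone and the excess intersection formula — rather than on any elementary manipulation. For the purposes of this paper it suffices to invoke Proposition 2.6 of \cite{Fulton} directly.
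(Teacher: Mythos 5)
Your proposal is correct and matches the paper, which states this result without proof as a direct citation of Proposition 2.6 of \cite{Fulton}. One minor remark: in the divisor case Fulton establishes (b) already in Chapter 2, essentially from the definition of intersecting a cycle supported on $|D|$ with the pseudo-divisor $D$ (namely $c_1(\Oo_X(D)|_{|D|})\cap\alpha$), so the deformation-to-the-normal-cone and excess-intersection machinery you invoke is heavier than needed, although it does of course specialize to this statement.
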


The first Chern classes associated with the morphisms $i$ and $j$ above are given by
$$c_1(\Oo_X(\PP^1_\infty))=a,\,c_1(\Oo_X(\PP^1_E))=b,\,c_1(N_\infty)=x\text{ and }c_1(N_E)=-y.$$
Using Proposition $\ref{cherns}$ we obtain the following intersection products:
$$
1_\infty\cdot a:=i^*a=i^*i_*1_\infty=c_1(N_\infty)=x\text{ and }
1_E\cdot b:=j^*b=j^*j_*1_E=c_1(N_E)=-y.$$
Since $\PP^1_\infty\cap \PP^1_E=\emptyset$, it follows that $1_\infty\cdot b=0$ and $1_E\cdot a=0$.\\

With these results we can write the first term of the weight spectral sequence associated with the compactification $U\hookrightarrow \widetilde{\PP}^{2}$.
By definition the only non-trivial terms are
$$E_1^{0,q}(U)=H^q(\widetilde{\PP}^{2};\QQ)\text{ and }E_1^{-1,q}(U)=H^{q-2}(D;\QQ),$$
and the differentials $d_1:H^{q-2}(D)\lra H^q(\widetilde{\PP}^{2})$ are given by the Gysin morphisms $i_*$ and $j_*$.
Let $u=1_\infty$ and $v=1_E$. Since $x=u\cdot a$ and $y=-v\cdot b$
we can write
$$E_1^{*,*}(U)=\Lambda\left(u,v,a,b\right)/R$$
as the quotient of the free bigraded algebra generated by $u$ and $v$ of bidegree (-1,2), and $a$ and $b$ of bidegree (0,2), by the ideal of relations 
$$R=\left(uv,\,ub,\,va,\,a^{2}+b^{2},\,ab\right).$$
The differential is defined on the generators by $du=a$, $dv=b$ and $da=db=0$.\\

Since $\PP^1_\CC$ is smooth and compact the first term of the associated weight spectral sequence satisfies
$$E_1^{0,q}(\PP^{1}_\CC)=H^{q}(\PP_\CC^{1};\QQ)\text{ and }E_1^{-p,q}(\PP^{1}_\CC)=0\text{ for }p\neq 0.$$
Therefore we can write
$$E_1^{*,*}(\PP^1_\CC)=\Lambda(\alpha) / \alpha^{2}$$
as the quotient of the free bigraded algebra generated by $\alpha$ in bidegree $(0,2)$ by the ideal $(\alpha^{2})$, and with trivial differential.

\begin{prop}Let $f:U:=\CC^{2}\setminus\{0\}\to \PP^1_\CC$ be a morphism of complex algebraic varieties extending to a morphism
$g:\wt\PP^2_\CC\to \PP^1_\CC$.
\begin{enumerate}[(1)]
\item There exists a unique $\eps \in \ZZ$ such that $E_1(g):E_1(\PP^1_\CC)\lra E_1(U)$ satisfies $\alpha\mapsto \eps (a\pm b)$.
\item The map $\wt f:M(S^2)\to M(S^3)$ given by $\beta\mapsto \eps^2 \gamma$ defines a normalized minimal model of $f^*$.
\item The Hopf invariant of $f$ is $H(f)=\eps^2$.
\end{enumerate}
\end{prop}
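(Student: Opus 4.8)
The plan is to establish the three assertions in order, using the $E_1$-formality Theorem \ref{formalitat_vars} to replace $f^*$ by the explicit morphism $E_1(f)$ in part (2), and then reading off (3) from Proposition \ref{hopf}. For (1), I would observe that $E_1(g)$ is a morphism of bigraded dga's and that $\alpha$ has bidegree $(0,2)$, so its image lies in the weight-zero column $E_1^{0,2}(U) = H^2(\wt\PP^2;\QQ) = \QQ\langle a,b\rangle$; thus $E_1(g)(\alpha) = ma + nb$ with $m,n \in \ZZ$, the integrality coming from the fact that $g^*$ preserves integral cohomology. Multiplicativity together with $\alpha^2 = 0$ forces $(ma+nb)^2 = 0$; expanding and using the relations $ab=0$ and $a^2 = -b^2$ this reads $(m^2 - n^2)a^2 = 0$, hence $n = \pm m$. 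Setting $\eps := m$ yields $E_1(g)(\alpha) = \eps(a \pm b)$, and $\eps$ is unique as the coefficient of $a$.

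For (2), the first step is the reduction to $E_1(f)$. Since $i\colon S^3\hookrightarrow U$ is a homotopy equivalence and $\PP^1_\CC\cong S^2$, the normalized minimal model of $f^*$ is that of $(f\circ i)^*$, computed through $M(S^2)$ and $M(S^3)$. By Theorem \ref{formalitat_vars} the rational morphism $f^*$ is linked to $E_1(f)$ by a zig-zag of $E_1$-quasi-isomorphisms, which are in particular ordinary quasi-isomorphisms; hence $f^*$ and $E_1(f)$ have the same normalized minimal model, and on the generator $E_1(f)(\alpha) = E_1(g)(\alpha) = \eps(a\pm b)$ by functoriality of the weight spectral sequence on the weight-zero column.

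The second step is an explicit computation in $E_1(U) = \Lambda(u,v,a,b)/R$. A direct calculation with $du=a$, $dv=b$ shows $H^*(E_1(U),d_1)\cong H^*(S^3;\QQ)$, with $H^3$ generated by the cocycle $ua+vb$ (indeed $d(ua+vb) = a^2+b^2 = 0$). Thus $\rho_3\colon M(S^3)\to E_1(U)$, $\gamma\mapsto ua+vb$, and $\rho_2\colon M(S^2)\to E_1(\PP^1_\CC)$, $\alpha\mapsto\alpha$, $\beta\mapsto 0$, are minimal models. Mimicking the proof of Proposition \ref{hopf}, I put $\theta := \eps(u\pm v)$, so that $d\theta = \eps(a\pm b) = E_1(f)(\alpha)$, and define a homotopy $h\colon M(S^2)\to E_1(U)\otimes\Lambda(t,dt)$ by $h(\alpha) = d(\theta t)$ and $h(\beta) = \lambda(ua+vb)(1-t^2)$. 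Using $\theta^2 = 0$ and $(d\theta)^2 = \eps^2(a^2+b^2) = 0$, the multiplicativity condition $h(\alpha)^2 = dh(\beta)$ reduces exactly as in Proposition \ref{hopf} to $\theta\cdot d\theta = \lambda(ua+vb)$. The decisive identity is
$$\theta\cdot d\theta = \eps^2(u\pm v)(a\pm b) = \eps^2(ua+vb),$$
both sign choices collapsing to $ua+vb$ through the relations $ub = 0 = va$. Hence $\lambda = \eps^2$, so $\wt f_{\eps^2}$ is the normalized minimal model of $f^*$, proving (2). Assertion (3) is then immediate from Proposition \ref{hopf}, which equates the coefficient $\lambda$ of the normalized minimal model with $H(f\circ i) = H(f)$.

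The step I expect to be most delicate is the reduction in (2): one must invoke $E_1$-formality (Theorem \ref{formalitat_vars}) to transfer the minimal model computation from $f^*$ to the combinatorially explicit $E_1(f)$, carefully using that forgetting the weight filtration turns an $E_1$-quasi-isomorphism into a quasi-isomorphism so that rational minimal models are preserved, and identifying $E_1(f)$ with $E_1(g)$ on $\alpha$. Once this is in place, the remaining work is a faithful transcription of the argument of Proposition \ref{hopf} inside $E_1(U)$, with the volume form $w_3$ replaced by the cocycle $ua+vb$; the only genuinely new input is the relation-driven collapse $(u\pm v)(a\pm b) = ua+vb$.
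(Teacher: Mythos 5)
Your proposal is correct and follows essentially the same route as the paper: the same dimension-plus-multiplicativity argument with integrality for (1), the same minimal models $\rho(\alpha)=\alpha$, $\rho'(\gamma)=ua+vb$ and the same explicit homotopy $h(\alpha)=d(\theta t)$, $h(\beta)=\eps^2(ua+vb)(1-t^2)$ reducing to the identity $(u\pm v)(a\pm b)=ua+vb$ for (2), and Proposition \ref{hopf} for (3). The extra detail you supply on transferring the minimal model from $f^*$ to $E_1(g)$ via Theorem \ref{formalitat_vars} is exactly what the paper invokes implicitly.
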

\begin{proof}
By dimensional reasons any morphism $E_1(g):E_1(\PP^1_\CC)\to E_1(U)$ is of the form $\alpha\mapsto \eps_1 a+\eps_2 b$, where $\eps_i\in \QQ$.
The compatibility condition for $\alpha^2=0$ implies that $\eps_1=\pm \eps_2$.
The weight spectral sequence associated with a compactification is defined over $\ZZ$. Hence, $E_1(g)$ is defined  over $\ZZ$, and (1) follows.
Let us prove (2).
Define a quasi-isomorphism $\rho:M(S^2)\to E_1(\PP^1_\CC)$ by letting $\rho(\alpha)=\alpha$ and $\rho(\beta)=0$.
Likewise, define a quasi-isomorphism $\rho':M(S^3)\to E_1(U)$ by $\rho'(\gamma)=ua+vb$.
The morphism $h:M(S^2)\to E_1(U)\otimes\Lambda(t,dt)$ defined by 
$h(\alpha)=\eps(a\pm b)t-\eps(u\pm v)dt$ and $h(\beta)=\eps^2\cdot(ua+vb)(1-t^{2})$
is a homotopy from $\rho'\circ \wt{f}$ to $E_1(g)\circ\rho$. Hence $\wt f$ is a minimal model of $E_1(g)$.
By Theorem $\ref{formalitat_vars}$ this defines a normalized minimal model of $f^*$.
This proves (2). Assertion (3) follows from Proposition $\ref{hopf}$.
\end{proof}

\begin{example}
Let $q\geq 1$, and let $f:U\to \PP^1_\CC$ be the morphism defined by
$(x_0,x_1)\mapsto [x_0^q:x_1^q]$. For $q=1$, this morphism is the Hopf fibration.
Then $f$ extends to a morphism $g:\wt\PP^2_\CC\to \PP^1_\CC$.
The induced morphism at the level of spectral sequences
$E_1(g):E_1(\PP^1_\CC)\lra E_1(U)$ is given by $\alpha\mapsto q(a-b)$. Indeed,
the pre-image $g^{-1}(p)$ of a point $p$ is a family of $q$ lines which intersect both $\PP^1_E$ and $\PP^1_\infty$ at a point in $\wt\PP^2_\CC$.
We find that $H(f)=q^2$.
\end{example}

\subsection*{Acknowledgments}
We want to thank V. Navarro for his valuable comments and suggestions.

\linespread{1}
\bibliographystyle{amsalpha}
\bibliography{bib}

\providecommand{\bysame}{\leavevmode\hbox to3em{\hrulefill}\thinspace}
\providecommand{\MR}{\relax\ifhmode\unskip\space\fi MR }
\providecommand{\MRhref}[2]{%
  \href{http://www.ams.org/mathscinet-getitem?mr=#1}{#2}
}
\providecommand{\href}[2]{#2}
\begin{thebibliography}{DGMS75}

\bibitem[Bei86]{Be}
A.~A. Beilinson, \emph{Notes on absolute {H}odge cohomology}, Applications of
  algebraic {$K$}-theory to algebraic geometry and number theory, vol.~55,
  1986, pp.~35--68.

\bibitem[BG76]{BG}
A.~K. Bousfield and V.~K. A.~M. Gugenheim, \emph{On {${\rm PL}$} de {R}ham
  theory and rational homotopy type}, Mem. Amer. Math. Soc. \textbf{8} (1976),
  no.~179.

\bibitem[Bor91]{Borel}
A.~Borel, \emph{Linear algebraic groups}, second ed., Graduate Texts in
  Mathematics, vol. 126, Springer-Verlag, New York, 1991.

\bibitem[BT82]{BottTu}
R.~Bott and L.~W. Tu, \emph{Differential forms in algebraic topology}, Graduate
  Texts in Mathematics, vol.~82, Springer-Verlag, New York, 1982.

\bibitem[Del71]{DeHII}
P.~Deligne, \emph{Th\'eorie de {H}odge. {II}}, Inst. Hautes \'Etudes Sci. Publ.
  Math. (1971), no.~40, 5--57.

\bibitem[Del74]{DeHIII}
\bysame, \emph{Th\'eorie de {H}odge. {III}}, Inst. Hautes \'Etudes Sci. Publ.
  Math. (1974), no.~44, 5--77.

\bibitem[DGMS75]{DGMS}
P.~Deligne, P.~Griffiths, J.~Morgan, and D.~Sullivan, \emph{Real homotopy
  theory of {K}\"ahler manifolds}, Invent. Math. \textbf{29} (1975), no.~3,
  245--274.

\bibitem[FHT01]{FHT}
Y.~F{\'e}lix, S.~Halperin, and J-C. Thomas, \emph{Rational homotopy theory},
  Graduate Texts in Mathematics, vol. 205, Springer-Verlag, New York, 2001.

\bibitem[FT88]{FeTa}
Y.~F{\'e}lix and D.~Tanr{\'e}, \emph{Formalit\'e d'une application et suite
  spectrale d'{E}ilenberg-{M}oore}, Algebraic topology---rational homotopy
  ({L}ouvain-la-{N}euve, 1986), Lecture Notes in Math., vol. 1318, Springer,
  Berlin, 1988, pp.~99--123.

\bibitem[Ful98]{Fulton}
W.~Fulton, \emph{Intersection theory}, second ed., Springer-Verlag, Berlin,
  1998.

\bibitem[GM81]{GM}
P.~Griffiths and J.~W. Morgan, \emph{Rational homotopy theory and differential
  forms}, Progress in Mathematics, vol.~16, Birkh\"auser Boston, Mass., 1981.

\bibitem[GN02]{GN}
F.~Guill{\'e}n and V.~Navarro, \emph{Un crit\`ere d'extension des foncteurs
  d\'efinis sur les sch\'emas lisses}, Publ. Math. Inst. Hautes \'Etudes Sci.
  (2002), no.~95, 1--91.

\bibitem[GNPR05]{Operads}
F.~Guill{\'e}n, V.~Navarro, P.~Pascual, and A.~Roig, \emph{Moduli spaces and
  formal operads}, Duke Math. J. \textbf{129} (2005), no.~2, 291--335.

\bibitem[GNPR10]{GNPR}
\bysame, \emph{A {C}artan-{E}ilenberg approach to homotopical algebra}, J. Pure
  Appl. Algebra \textbf{214} (2010), no.~2, 140--164.

\bibitem[GS75]{GS}
P.~Griffiths and W.~Schmid, \emph{Recent developments in {H}odge theory: a
  discussion of techniques and results}, Discrete subgroups of {L}ie groups and
  applicatons to moduli, Oxford Univ. Press, Bombay, 1975, pp.~31--127.

\bibitem[Hai87]{Ha}
R.~M. Hain, \emph{The de {R}ham homotopy theory of complex algebraic varieties.
  {II}}, $K$-Theory \textbf{1} (1987), no.~5, 481--497.

\bibitem[HS79]{HS}
S.~Halperin and J.~Stasheff, \emph{Obstructions to homotopy equivalences}, Adv.
  in Math. \textbf{32} (1979), no.~3, 233--279.

\bibitem[HT90]{HT}
S.~Halperin and D.~Tanr{\'e}, \emph{Homotopie filtr\'ee et fibr\'es {$C\sp
  \infty$}}, Illinois J. Math. \textbf{34} (1990), no.~2, 284--324.

\bibitem[Mor78]{Mo}
J.~W. Morgan, \emph{The algebraic topology of smooth algebraic varieties},
  Inst. Hautes \'Etudes Sci. Publ. Math. (1978), no.~48, 137--204.

\bibitem[Nav87]{Na}
V.~Navarro, \emph{Sur la th\'eorie de {H}odge-{D}eligne}, Invent. Math.
  \textbf{90} (1987), no.~1, 11--76.

\bibitem[PS08]{PS}
C.~Peters and J.~Steenbrink, \emph{Mixed {H}odge structures}, Ergebnisse der
  Mathematik und ihrer Grenzgebiete. 3. Folge. A Series of Modern Surveys in
  Mathematics, vol.~52, Springer-Verlag, Berlin, 2008.

\bibitem[Roi94]{Roig}
A.~Roig, \emph{Formalizability of dg modules and morphisms of cdg algebras},
  Illinois J. Math. \textbf{38} (1994), no.~3, 434--451.

\bibitem[Sul77]{Su}
D.~Sullivan, \emph{Infinitesimal computations in topology}, Inst. Hautes
  \'Etudes Sci. Publ. Math. (1977), no.~47, 269--331 (1978).

\bibitem[Wat79]{Waterhouse}
W.~C. Waterhouse, \emph{Introduction to affine group schemes}, Graduate Texts
  in Mathematics, vol.~66, Springer-Verlag, New York, 1979.

\end{thebibliography}
\mbox{}\\
\linespread{1.2}

\end{document}